\newtheorem{theorem}{Theorem}[section]
\newtheorem{lemma}[theorem]{Lemma}
\theoremstyle{definition}
\newtheorem{definition}[theorem]{Definition}
\theoremstyle{remark}
\newtheorem{remark}[theorem]{Remark}
\newtheorem{corollary}[theorem]{Corollary}
\newtheorem{proposition}[theorem]{Proposition}
\numberwithin{equation}{section}
\begin{document}
	
	\title[Bivariate Countable FIF and Nonlinear Fractal Operator]{On Bivariate Fractal Interpolation  for Countable Data and Associated Nonlinear Fractal Operator}
	\author{K. K. Pandey}
	\address{Department of Mathematics, IIT Delhi, New Delhi, India 110016}
	
	\email{kshitij.sxcr@gmail.com}
	\author{P. Viswanathan}
	\address{Department of Mathematics, IIT Delhi, New Delhi, India 110016}
	
	\email{viswa@maths.iitd.ac.in}
	
	
	
	\subjclass[2000]{28A80, 47H14}
	
	
	
	\keywords{ Bivariate fractal interpolation, countable data set, $\alpha$-fractal function, fractal operator,  nonlinear operators, perturbation of operators}
	
	\begin{abstract}
We provide a general framework to construct fractal interpolation surfaces (FISs)  for a prescribed  countably infinite data set  on a rectangular grid. Using this as a crucial tool, we obtain  a parameterized family of bivariate fractal functions simultaneously interpolating and approximating  a prescribed bivariate continuous function.  Some elementary properties of the associated nonlinear (not necessarily linear) fractal operator are established, thereby initiating the interaction of the notion of fractal interpolation with the theory of nonlinear operators.  
	\end{abstract}
	
	\maketitle

	\section{INTRODUCTION}
	Over the past three decades, the subject of fractal  interpolation has  been one of the major research themes among fractals community.  The concept of Fractal Interpolation Function (FIF) was introduced by Barnsley in reference \cite{MF1},  where  he demonstrates the existence of a univariate continuous function  interpolating a given data set such that the graph of the constructed  interpolant is  a fractal in the sense that it is the attractor of a suitable Iterated Function System (IFS) \cite{JH}.
We do not attempt here to provide a complete list of references on fractal interpolation, as the field is quite large. Instead, we  refer the reader to the recent research works on fractal interpolation \cite{DO,Luor,PM1,SR,WY} and references therein. For a compendium  of fractal interpolation and related topics, the reader may  consult the interesting monograph \cite{PM2}. 

\par As is well known, interpolation and approximation are intimately related, however the interplay between these two theories is more subtle in the fractal setting.  It is our opinion that in the context of  fractal interpolation,   $\alpha$-fractal function - a notion explored by Navascu\'{e}s  \cite{MN1,MN2} - is mostly responsible for the favourable findings on the interconnection between interpolation and approximation theories of univariate functions.  In fact, $\alpha$-fractal function provides a parameterized family of fractal functions  that simultaneously interpolate and approximate a given univariate continuous function. The parameters can be adjusted so that the fractal functions share or modify the properties of the original function, for instancce, smoothness and shape properties \cite{SC, VN2}.  The analytical properties of the fractal operator that maps every function to its fractal counterpart  and new approximation classes of self-referential functions obtained as fractalization of various approximation classes (such as  polynomials, rational functions, and trigonometric polynomials) via this fractal operator have received considerable attention in the literature on univariate fractal approximation theory \cite{MN1,MN2,MN3,VC, VN}. Let us stress here that the studies on the aforementioned fractal operator are mostly  confined to the realm of bounded linear operators. 
	\par
	As with any such salient idea, numerous questions and results based on fractal interpolation were spawned. One question asked had to do with its extension to multivariate case.   In this direction, several works have been done to interpolate a given bivariate data using bivariate FIFs (fractal surfaces), see, for instance, \cite{GP,Dalla,Feng,GH,Mal,PM,Metz,Xie,Zhao} and a few researches deal with multivariate FIFs \cite{BD,HM}. Among various constructions  available in the literature,  we found the general framework to construct fractal surfaces given in \cite{Ruan} to be  quite interesting. This is due to the fact that the construction thereat is amenable to obtain bivariate analogue of $\alpha$-fractal function, which is a natural entry point to delve into the theory of  bivariate fractal approximation, see also \cite{SCNA, SV}. 
	\par
	Much of the existing literature dealing with univariate and multivariate FIFs concentrate  primarily on finite data sets. This is no happenstance, as, the theory of fractal interpolation is based on Hutchinson's fundamental  result that guarantees the existence of invariant set (attractor) for a set-valued map induced by a finite number of contractions \cite{JH}.   Attempts were made in literature to define these concepts in the setting of countably infinite maps. For instance, Secelean adapted  Hutchinson approach so as to handle countable number of contractions \cite{NAS1} and    deduced the existence of a univariate fractal function interpolating a data set consisting of countably infinite points \cite {NAS2}.  Bivariate fractal interpolation function for an infinite sequence of data is not studied hitherto and naturally we want to generalize  construction of fractal surfaces so as to accomodate  countably infinite data set.  Let us note that  this is practically important bacause, for instance, in the theory of sampling and reconstruction, often one works with infinite sequence of data points and a general approach focuses to seek an approximate rather than the perfect reconstruction on some restricted class of signals. 
	\par
   One of the objectives of this note is to bring to light a construction of Fractal Interpolation Surface (FIS) for an arbitrary  countably infinite data set over rectangular grids.  To this end, we borrow the construction from \cite{Ruan},  and apply it, \textit{mutatis mutandis}, to the setting of countable data.       This part of the current findings may be viewed also as a sequel to \cite{NAS2}, where the univariate fractal interpolation for a countable data set was studied. As hinted earlier, among various constructions of FISs, our choice of \cite{Ruan} is guided by the fact that    it offers a simple and efficient  platform to obtain  a parameterized family of fractal functions corresponding to a prescribed bivariate continuous function.   Crucial to further development will be  this parameterized  family consisting of approximate fractal reconstructions of the original  function (seed function) defined on a rectangular domain. This family of fractal functions  is obtained by sampling the seed function  at infinite number of grid points in the domain and applying the countable FIS scheme developed in the first part of this note. 
\par We define a fractal operator that sends each seed function to its fractal approximate reconstruction and study some analytical properties of this operator, which is, in general, nonlinear. Though the bounded linear fractal operator has its origin in the theory of univariate $\alpha$-fractal function (see, for example, \cite{MN2}),  our focus will be to its intriguing links with the perturbation theory of operators (not necessarily linear or bounded). Thus,  the research reported  here could open the door for intense and fruitful interaction of  two fields - fractal interpolation and the theory of nonlinear operators,  and the potential  applications lie, for instance,  in the field of sampling and reconstruction. 
    
	\section{Review of countable iterated function system and bivariate fractal interpolation}
	As hinted in the introductory section, impetus for carrying out the studies reported in this note came from the reading of \cite{Ruan, NAS2}. For the sake of convenience, we outline a few important concepts from these references that form the background material. 
	\begin{definition}
		Let $(X,d)$ be a compact metric space and $(\omega_i)_{i\in\mathbb{N}}$ be a sequence of continuous self maps on $X.$ The set $\{X,\omega_i: i \in \mathbb{N}\}$ is called  a countable iterated function system (CIFS). The CIFS is said to be hyperbolic if the functions, $\omega_i$, $i\in \mathbb{N}$ are contractions, say,  with respective contractivity factor $r_i$ such that $\sup_{i \in \mathbb{N}}r_i < 1.$ 
	\end{definition}
\begin{definition}
	A non-empty set $A\subset X$ is said to be a set fixed point of the CIFS if
	$A = \overline{\bigcup_{i\in \mathbb{N}}\omega_i(A)}.$
	If the set fixed point is unique, then it is said to be the attractor of the CIFS.
\end{definition}
\begin{theorem}[\cite{NAS1}] \label{Basthm1}
Let $\{X,\omega_i: i \in \mathbb{N}\}$ be a hyperbolic CIFS. Then there exists a non-empty compact set $A\subset X$ such that
$A = \overline{\bigcup_{i\in \mathbb{N}}\omega_i(A)}.$
\end{theorem}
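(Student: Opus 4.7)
The plan is to mimic the classical Hutchinson argument, replacing the finite union by a closed countable union. Let $\mathcal{H}(X)$ denote the collection of non-empty compact subsets of $X$ equipped with the Hausdorff metric $d_H$. Since $(X,d)$ is compact, $(\mathcal{H}(X), d_H)$ is a complete metric space. I define the Hutchinson-type set-valued operator $W : \mathcal{H}(X) \to \mathcal{H}(X)$ by
\begin{equation*}
W(B) \, = \, \overline{\bigcup_{i \in \mathbb{N}} \omega_i(B)}.
\end{equation*}
The idea is then to verify that $W$ is a well-defined contraction on $\mathcal{H}(X)$ and invoke the Banach fixed point theorem to obtain $A \in \mathcal{H}(X)$ with $W(A) = A$, which is precisely the conclusion.

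First I would check that $W$ does land in $\mathcal{H}(X)$. Each $\omega_i(B)$ is non-empty and compact (continuous image of a compact set), so $\bigcup_i \omega_i(B)$ is non-empty; its closure is closed, and since $X$ itself is compact, this closure is automatically compact. Thus $W(B) \in \mathcal{H}(X)$.

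The central estimate is that, writing $r := \sup_{i \in \mathbb{N}} r_i < 1$, one has $d_H(W(A), W(B)) \le r \, d_H(A,B)$ for all $A, B \in \mathcal{H}(X)$. The key lemma for the countable case is that
\begin{equation*}
d_H\Bigl(\,\overline{\textstyle\bigcup_{i\in\mathbb N} E_i}\,,\, \overline{\textstyle\bigcup_{i\in\mathbb N} F_i}\,\Bigr) \, \le \, \sup_{i\in\mathbb N} d_H(E_i,F_i),
\end{equation*}
which I would establish by a direct $\varepsilon/2$-argument: any point in $\overline{\bigcup E_i}$ is approximated by a point in some $E_{i_0}$, which in turn is within $d_H(E_{i_0},F_{i_0})$ of a point in $F_{i_0} \subset \overline{\bigcup F_i}$, and symmetrically. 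Combining this with the contraction inequality $d_H(\omega_i(A), \omega_i(B)) \le r_i \, d_H(A,B) \le r \, d_H(A,B)$ gives the required contraction.

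The only genuine obstacle is the closure lemma above: taking closures on both sides could, a priori, create extra limit points that spoil the Hausdorff distance estimate. The point is that the Hausdorff distance between two sets equals the Hausdorff distance between their closures, so passing to closures is harmless, and the supremum on the right-hand side suffices to control \emph{all} terms in the union uniformly because each $\omega_i$ contracts by at most $r<1$. Once this is in place, the Banach fixed point theorem applied to the complete metric space $(\mathcal{H}(X), d_H)$ yields a (unique) $A \in \mathcal{H}(X)$ with $A = W(A) = \overline{\bigcup_{i\in\mathbb{N}} \omega_i(A)}$, completing the proof.
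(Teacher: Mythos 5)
Your proof is correct: the closure-of-the-union Hutchinson operator $W(B)=\overline{\bigcup_{i}\omega_i(B)}$ is well defined on $\mathcal{H}(X)$ (compactness of $X$ makes the closure compact), and your key lemma --- that Hausdorff distance is unchanged under taking closures and that $d_H$ of unions is bounded by the supremum of the termwise distances --- is exactly what makes the countable case go through, after which $d_H(W(A),W(B))\le (\sup_i r_i)\,d_H(A,B)$ and the Banach fixed point theorem on the complete space $(\mathcal{H}(X),d_H)$ finish the argument. The paper does not prove this statement itself but cites it from Secelean \cite{NAS1}, and your argument is essentially the standard one used there, so there is nothing further to reconcile.
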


Let $\{(x_i,y_j,z_{ij}) \in \mathbb{R}^3 : i=0,1,\dots,m; j=0,1,\dots,n\}$ be an interpolation data set such that $a=x_0< x_1 < \dots <x_m=b$ and $c=y_0<y_1<\dots <y_n=d.$ Set $I=[a,b]$ and $J=[c,d].$  For brevity, we shall make the following abbreviations. Let
$\Sigma_m=\{1,2,\dots,m\}$, $\Sigma_{m,0}=\{0,1,\dots m\}$,  $\partial \Sigma_{m,0}=\{0,m\}$ and 
	int$\Sigma_{m,0}=\{1,2,\dots,m-1\}.$ Similarly, we can define $\Sigma_n$, $\Sigma_{n,0}$, $ \partial \Sigma_{n,0}$ and int$\Sigma_{n,0}.$ Denote $I_i=[x_{i-1},x_i]$ and $J_j=[y_{j-1},y_j]$ for $i \in \Sigma_m$ and $j \in \Sigma_n.$ For any $i \in \Sigma_m,$ let $u_i:I \rightarrow I_i$ be a contractive homeomorphism satisfying 
\begin{equation*}
\begin{cases}
& u_i(x_0)=x_{i-1}, u_i(x_m)=x_i, ~~\text{if i is odd},\\
& u_i(x_0)=x_i, u_i(x_m)=x_{i-1},~~ \text{if i is even},\\
& |u_i(x)-u_i(x')| \le a_i|x -x'|, ~~~~ \forall ~~x, x' \in I,
\end{cases}
\end{equation*}
where $0 < a_i < 1$ is a  constant. Similarly, for any $j \in \Sigma_n,$ let $v_j:J \rightarrow J_j$ be a contractive homeomorphism satisfying
\begin{equation*}
\begin{cases}
v_j(y_0)=y_{j-1}, v_j(y_n)=y_j,\text{ if j is odd},\\
v_j(y_0)=y_j, v_j(y_n)=y_{j-1}, ~~ \text{if j is even},\\
|v_j(y)-v_j(y')| \le b_j|y -y'|, ~~~~ \forall ~~y, y' \in J, 
\end{cases} 
\end{equation*}
where $0 < b_j < 1$ is a constant. By the definitions of $u_i$ and $v_j,$ it is easy to check that
\begin{equation*}
\begin{aligned}
u^{-1}_i(x_i)=u^{-1}_{i+1}(x_i), ~~~~\forall ~~i \in \text{int}\Sigma_{m,0},
\end{aligned} 
\end{equation*}
and
\begin{equation*}
\begin{aligned}
v^{-1}_j(y_j)=v^{-1}_{j+1}(y_j),~~~~ \forall ~~j \in \text{int}\Sigma_{n,0}. 
\end{aligned} 
\end{equation*}
Let $\tau: \mathbb{N}\times \{0,m,n\} \rightarrow \mathbb{N}\cup\{0\}$ be defined by 
\begin{equation*}
\tau(i,0)=
\begin{cases}
 i-1,  \text{ if}~~  i ~~\text{is odd},\\ 
 i, \text{ if} ~~ i~~ \text{is even}.
\end{cases} 
\end{equation*}
\begin{equation*}
\tau(i,m)=\tau(i,n)=
\begin{cases}
 i,  \text{ if}~~  i~~ \text{ is odd},\\ 
 i-1, \text{ if} ~~ i ~~\text{is even}.
\end{cases} 
\end{equation*}
Set $X=I \times J \times \mathbb{R}.$ For each $(i,j) \in \Sigma_m \times \Sigma_n,$ let $F_{ij}:X \rightarrow \mathbb{R}$ be a continuous function satisfying $$ F_{ij}(x_k,y_l,z_{kl})=z_{\tau(i,k),\tau(j,l)},  ~~~~~~ \forall ~~(k,l) \in \partial \Sigma_{m,0} \times  \partial \Sigma_{n,0}$$ and  $$|F_{ij}(x,y,z')- F_{ij}(x,y,z'')| \le \alpha_{ij} |z' - z''|,$$ for all $(x,y) \in I \times J$ and $z',z'' \in \mathbb{R},$ where $0 < \alpha_{ij} < 1$ is a given constant.\\
Now, for each $(i,j) \in \Sigma_m \times \Sigma_n,$ we define $W_{ij}:X \rightarrow I_i \times J_j \times \mathbb{R}$ by $$ W_{ij}(x,y,z)=\big(u_i(x),v_j(y),F_{ij}(x,y,z)\big).$$
Then $\{X, W_{ij}: (i,j) \in\Sigma_m \times \Sigma_n \}$ is an IFS. 
\begin{theorem} [\cite{Ruan}] 
	Let $\{X, W_{ij}: (i,j) \in\Sigma_m \times \Sigma_n \}$ be the IFS defined  above. Assume that $\{F_{ij}:(i,j) \in \Sigma_m \times \Sigma_n \}$ satisfies the following matching conditions:
	\begin{enumerate}
		\item for all $i \in \text{int}\Sigma_{m,0}, j \in \Sigma_n$ and $x^*=u^{-1}_i(x_i)=u^{-1}_{i+1}(x_i),$
		$$F_{ij}(x^*,y,z)=F_{i+1,j}(x^*,y,z),  \forall ~y \in J, z \in \mathbb{R}, $$ 
		\item  for all $i \in \Sigma_m, j \in \text{int} \Sigma_{n,0}$ and $y^*=v^{-1}_j(y_j)=v^{-1}_{j+1}(y_j),$ $$F_{ij}(x,y^*,z) = F_{i,j+1}(x,y^*,z),~ \forall~ x \in I, z \in \mathbb{R}.$$
	\end{enumerate}
	Then there exists a unique continuous function $g:I \times J \rightarrow \mathbb{R}  $ such that $g(x_i,y_j)=z_{ij}$ for all $(i,j) \in \Sigma_{m,0} \times \Sigma_{n,0}$ and $G= \bigcup_{(i,j) \in \Sigma_m \times \Sigma_n} W_{ij}(G),$ where $G$ is the  graph of $g$, that is,  $G=\big\{(x,y,g(x,y)):(x,y) \in I \times J \big\}$. 
\end{theorem}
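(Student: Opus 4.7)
The plan is to invoke the Banach fixed point theorem for a Read-Bajraktarevi\'c-type operator on an appropriate closed subset of $C(I\times J)$, and then to read off the interpolation property and the self-referential identity for the graph from the fixed-point equation.

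First I would introduce the ambient space $\mathcal{G} = \{g \in C(I\times J) : g(x_k, y_l) = z_{kl} \text{ for all } (k,l) \in \partial\Sigma_{m,0} \times \partial\Sigma_{n,0}\}$, equipped with the uniform norm $\|\cdot\|_\infty$; this is a closed subset of $C(I\times J)$, hence a complete metric space. On $\mathcal{G}$ define the operator $T$ piecewise by
$$(Tg)(x,y) = F_{ij}\bigl(u_i^{-1}(x),\, v_j^{-1}(y),\, g(u_i^{-1}(x), v_j^{-1}(y))\bigr), \qquad (x,y)\in I_i\times J_j.$$
The hard part of the construction, and what I expect to be the main obstacle, is verifying that this piecewise prescription yields a single continuous function on all of $I\times J$. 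Continuity inside each $I_i\times J_j$ is immediate as a composition of continuous maps. On the shared vertical seam $\{x_i\}\times J_j$ for $i\in\text{int}\Sigma_{m,0}$, the two prescriptions coming from $I_i\times J_j$ and $I_{i+1}\times J_j$ both evaluate at the common abscissa $x^* = u_i^{-1}(x_i) = u_{i+1}^{-1}(x_i)$ with the same ordinate $v_j^{-1}(y)$ and the same value of $g$, and matching condition (1) forces $F_{ij}(x^*,\cdot,\cdot) = F_{i+1,j}(x^*,\cdot,\cdot)$, so the two expressions agree. Matching condition (2) handles the horizontal seams $I_i\times\{y_j\}$ identically, and the two conditions together reconcile the four-rectangle meeting points $(x_i, y_j)$. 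Membership $Tg \in \mathcal{G}$ is then checked by evaluating at each of the four corners of $I\times J$ and invoking $F_{ij}(x_k, y_l, z_{kl}) = z_{\tau(i,k),\tau(j,l)}$ with the parity formulas for $\tau$.

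Next, $T$ is a contraction on $\mathcal{G}$: for any $(x,y)\in I_i\times J_j$ the Lipschitz bound on $F_{ij}$ in its third variable yields
$$|(Tg)(x,y) - (Th)(x,y)| \le \alpha_{ij}\,\|g - h\|_\infty \le \alpha\,\|g - h\|_\infty,$$
with $\alpha := \max_{(i,j)\in\Sigma_m\times\Sigma_n} \alpha_{ij} < 1$, and the Banach contraction principle produces a unique fixed point $g\in\mathcal{G}$.

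Finally, I would upgrade the four-corner interpolation built into $\mathcal{G}$ to interpolation at the full grid. For an interior node $(x_i, y_j)$, applying $g = Tg$ on the piece $I_i\times J_j$ reduces $g(x_i, y_j)$ to $F_{ij}(x_k, y_l, g(x_k, y_l))$ for the unique $(k,l)\in\partial\Sigma_{m,0}\times\partial\Sigma_{n,0}$ dictated by the parities of $i$ and $j$; since $g(x_k, y_l) = z_{kl}$, the prescribed corner values of $F_{ij}$ give $g(x_i, y_j) = z_{\tau(i,k),\tau(j,l)} = z_{ij}$, as a short parity check against the definition of $\tau$ confirms. The graph identity $G = \bigcup_{(i,j)} W_{ij}(G)$ is then a direct rewriting of $g = Tg$: for $(x,y)\in I_i\times J_j$ with $s = u_i^{-1}(x)$, $t = v_j^{-1}(y)$ the fixed-point equation gives $(x, y, g(x,y)) = W_{ij}(s, t, g(s,t)) \in W_{ij}(G)$, and conversely $W_{ij}(G)\subset G$ by the same identity. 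Uniqueness among all continuous interpolants with this invariance property follows by noting that any such candidate lies in $\mathcal{G}$ and satisfies $g = Tg$, hence coincides with the Banach fixed point.
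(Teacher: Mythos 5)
Your proposal is correct and follows essentially the same route as the paper: the quoted theorem is cited from Ruan without proof here, but the paper's proof of its countable analogue (Theorem \ref{CBFIF}) is precisely your Read--Bajraktarevi\'c argument — the space of continuous functions pinned at the corner values, well-definedness across the seams via the two matching conditions, contraction with factor $\max_{i,j}\alpha_{ij}<1$, interpolation at interior nodes via the parity of $\tau$, and the graph identity read off from the fixed-point equation. No further comment is needed.
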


\begin{definition}
The set $G$ appearing in the previous theorem is called  the FIS and function $g$ is termed  the bivariate FIF for the finite data set $\{(x_i,y_j,z_{ij}) \in \mathbb{R}^3 : i \in  \Sigma_{m,0}; j \in  \Sigma_{n,0}\}$.
\end{definition}
\section{Construction  of countable bivariate fractal interpolation surfaces}\label{sec3}
 Let $\mathbb{N}_0 = \mathbb{N}\bigcup\{0\}.$  First let us recall that to each double sequence $s: \mathbb{N}_0 \times  \mathbb{N}_0 \to \mathbb{R}$ denoted by $s(i,j) = s_{ij}$, there corresponds  three important limits, namely, 
$$ \lim_{i,j \to \infty} s_{ij},  ~~ \lim_{ i \to \infty} \big(\lim_{ j \to \infty} s_{ij}  \big)~~ \text{and} ~~  \lim_{j \to \infty} \big(\lim_{ i \to \infty} s_{ij}  \big).$$
Further, it is worth to recall that the existence of $\lim_{i,j \to \infty} s_{ij}$ does not ensure, in general,  the existence of the limits $\lim_{i \to \infty} s_{ij}$ for each fixed $j \in \mathbb{N}_0$ and $\lim_{j \to \infty} s_{ij}$ for each fixed $i \in \mathbb{N}_0$. 
\par 
A set $D= \{(x_i,y_j,z_{ij}): i,j \in \mathbb{N}_0\}\subset\mathbb{R}^3$ is said to be bivariate countable system data (CSD) if  $(x_i)_{i \in \mathbb{N}_0}$ and  $(y_j)_{j \in \mathbb{N}_0}$ are strictly increasing bounded above sequences and the double sequence $(z_{ij})$ is convergent in the sense that $\lim_{i,j\to\infty}z_{ij}$ exists. Let $ M=z_{ \infty \infty}= \lim_{i,j\to\infty}z_{ij}$. Further assume that  $\lim_{j\to\infty}z_{ij}<\infty$ for each fixed $i\in\mathbb{N}_0$ and $\lim_{i\to\infty}z_{ij}<\infty$ for each fixed $j\in\mathbb{N}_0$. Let $x_0 =a$, $\lim_{i\to\infty}x_i = x_\infty=b$, $y_0 = c$  and $\lim_{j\to\infty}y_j =y_\infty= d.$ Set $I = [a,b]$, $J = [c,d]$. Assume that $K$  is a sufficiently large compact interval containing  $\{z_{ij}:i,j=0,1,2,\ldots\}\cup\{M\}$  and $X = I\times J\times K$.   
\par This section is concerned with the existence of a continuous function $g:I\times J\rightarrow \mathbb{R}$ whose graph is an attractor of an appropriate  CIFS and
$$g(x_i,y_j) = z_{ij}\text{ for all } i,j\in \mathbb{N}_0.$$ Our analysis will be patterned after  Ruan's \cite{Ruan} elegant construction of fractal surface outlined in the previous section. 
\par 
For  $i,j\in\mathbb{N}$, let  $I_i = [x_{i-1},x_i]$ and $ J_j = [y_{j-1},y_j]$. Let $s = (0,1,0,1,\ldots),$ be a binary sequence with $0$ at odd places and $1$ at even places. Define $\tau:\mathbb{N}\times\{0,\infty\}\rightarrow\mathbb{N}$  by
 	\begin{equation}\label{cons0}
 	\begin{aligned}
 	\tau(i,0)= i-1 + s_i \text{ and }\tau(i,\infty)= i - s_i.
 	\end{aligned} 
 	\end{equation}

For $i,j\in\mathbb{N},$ let $u_i:I\rightarrow I_i$ and $v_j:I\rightarrow I_j$ be  contractive homeomorphisms satisfying
\begin{equation}\label{cons1}
    u_i(x_0) = x_{i-1+s_i},\hspace{0.5cm} u_i(x_{\infty}) = x_{i-s_i},
\end{equation}
\begin{equation}\label{cons2}
|u_i(x) -u_i(x')|\leq a_{i}|x-x'|\hspace{0.5cm}\text{ for all } x,x'\in I,
\end{equation}
\begin{equation}\label{cons3}
	v_j(y_0) = y_{j-1+s_j},\hspace{0.5cm} v_j(y_{\infty}) = y_{j-s_j},
\end{equation}
\begin{equation}\label{cons4}
|v_j(y)-v_j(y')|\leq b_{j}|y-y'|\hspace{0.5cm}\text{ for all } y,y'\in J,
\end{equation}
where $a_{i},b_{j}$ are positive constants such that $\|a\|_{\infty} := \sup_{i \in \mathbb{N}}a_{i}<1$ and $\|b\|_{\infty} = \sup_{j \in \mathbb{N}}b_{j}<1.$ Further assume that $\|\delta\|_{\infty} := \sup_{i,j}\delta_{ij} < 1$, where $\delta_{ij} = \max\{a_{i},b_{j}\}.$ Using  (\ref{cons1}) and (\ref{cons3}),  one can easily observe that
\begin{equation*}
\begin{aligned}
&u_i^{-1}(x_i) = u_{i+1}^{-1}(x_{i})\hspace{0.5cm}~ \forall~ i\in\mathbb{N}\quad \text{ and} \quad 
v_j^{-1}(y_j) = v_{j+1}^{-1}(y_{j})\hspace{0.5cm} ~\forall ~j\in\mathbb{N}.
\end{aligned}
\end{equation*}
For each $(i,j)\in\mathbb{N}\times\mathbb{N},\text{ let } F_{ij}:X\rightarrow K$ be continuous function satisfying
\begin{equation}\label{F_kl}
F_{ij}(x_k,y_l,z_{kl}) = z_{\tau(i,k),\tau(j,l)} \hspace{0.5cm}\forall~~ k,l\in\{0,\infty\},
\end{equation}
\begin{equation}\label{cons1F}
|F_{ij}(x,y,z) - F_{ij}(x',y',z)|\leq \theta\|(x,y) - (x',y')\|,
\end{equation}
where $\theta> 0$, $\|.\|$ is the Euclidean norm on $\mathbb{R}^2$ and 
\begin{equation}\label{cons2F}
|F_{ij}(x,y,z) - F_{ij}(x,y,z')|\leq \alpha_{ij}|z - z'|,
\end{equation}
where $\alpha_{ij}$ are positive constants such that $\|\alpha\|_{\infty} = \sup_{i,j}\alpha_{ij}<1$.
Now, for each $(i,j)\in\mathbb{N}\times\mathbb{N},$ we define $W_{ij}:X\rightarrow X$ by
\begin{equation}\label{CIFS}
W_{ij}(x,y,z) =\big (u_i(x),v_j(y),F_{ij}(x,y,z)\big).
\end{equation}
Then $\{X,W_{ij}:i,j\in\mathbb{N}\}$ is a countable IFS. Let $\mathcal{H}(X)$ denote the set of all non-empty compact subsets of $X$ endowed with the Hausdorff metric. Define the set-valued operator (see also the Hutchinson operator that made its debut in \cite{JH}) $\mathcal{W}:\mathcal{H}(X)\rightarrow\mathcal{H}(X)$ by\\
\begin{equation}
\mathcal{W}(B) = \overline{\bigcup_{i,j\geq 1}{W_{ij}(B)}}\quad \forall~~ B\in\mathcal{H}(X).
\end{equation}
By the definition we have
\begin{equation}
W_{ij}(x_k,y_l,z_{kl}) = \big(x_{\tau(i,k)},y_{\tau(j,l)},z_{\tau(i,k)\tau(k,l)}\big)~~\text{ for all } (k,l)\in\{0,\infty\}.
\end{equation}
For $(x,y,z),(x',y',z')\in\mathbb{R}^3$  and $ \delta>0,$ define a metric $d_{\delta}$ as follows
\begin{equation} \label{defmetric}
d_{\delta}((x,y,z),(x',y',z')) = \|(x,y) - (x',y')\| + \delta|z-z'|.
\end{equation}
It can be easily verified  that the metric $d_{\delta}$ defined above is equivalent to the Euclidean metric on $\mathbb{R}^3$ for all $\delta>0.$ The following result is analogous to Theorem \ref{Basthm1}. 
\begin{proposition}
 The countable iterated function system $\{X,W_{ij}:(i,j)\in\mathbb{N}\times\mathbb{N}\}$ is hyperbolic with respect to the metric $d_{\delta}$  defined in (\ref{defmetric}), with  $\delta := \inf_{i,j}\dfrac{1-2 \delta_{ij}}{2\theta}.$   Hence it possesses an attractor, that is, there exists a compact set $A\subseteq X$ such that $A=\mathcal{W}(A) = \overline{\bigcup_{i,j\geq 1}{W_{ij}(A)}}.$ 
\end{proposition}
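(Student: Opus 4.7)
The strategy is to verify that each $W_{ij}$ is a $d_\delta$-contraction whose Lipschitz constant is bounded above by a single $c<1$ independent of $(i,j)$; hyperbolicity of the CIFS then follows, and Theorem~\ref{Basthm1} applied to $\{X,W_{ij}\}$ delivers the attractor at once.

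Fix $p=(x,y,z)$ and $q=(x',y',z')$ in $X$. By the definition \eqref{CIFS},
\[
d_\delta(W_{ij}p, W_{ij}q) = \bigl\|(u_i(x)-u_i(x'),\,v_j(y)-v_j(y'))\bigr\| + \delta\,|F_{ij}(x,y,z)-F_{ij}(x',y',z')|.
\]
For the first summand, \eqref{cons2} and \eqref{cons4} combined with the inequality $\sqrt{a_i^2\xi^2+b_j^2\eta^2}\le \max(a_i,b_j)\sqrt{\xi^2+\eta^2}$ yield the bound $\delta_{ij}\,\|(x,y)-(x',y')\|$. For the second summand, inserting the intermediate value $F_{ij}(x',y',z)$ and applying \eqref{cons1F} and \eqref{cons2F} in turn gives
\[
|F_{ij}(x,y,z)-F_{ij}(x',y',z')| \le \theta\,\|(x,y)-(x',y')\| + \alpha_{ij}\,|z-z'|.
\]
Adding the two estimates and regrouping,
\[
d_\delta(W_{ij}p, W_{ij}q) \le (\delta_{ij}+\delta\theta)\,\|(x,y)-(x',y')\| + \delta\,\alpha_{ij}\,|z-z'|.
\]
The prescribed value $\delta=\inf_{i,j}\tfrac{1-2\delta_{ij}}{2\theta}$ is calibrated exactly so that $\delta_{ij}+\delta\theta\le\tfrac12$ for every $(i,j)$; together with $\alpha_{ij}\le\|\alpha\|_\infty<1$, the constant $c:=\max\{\tfrac12,\|\alpha\|_\infty\}<1$ bounds both coefficients. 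Therefore $d_\delta(W_{ij}p,W_{ij}q)\le c\,d_\delta(p,q)$ uniformly in $(i,j)$. Since $d_\delta$ is topologically equivalent to the Euclidean metric, $X$ stays compact in $(X,d_\delta)$, and Theorem~\ref{Basthm1} produces a non-empty compact $A\subseteq X$ with $A=\mathcal{W}(A)$.

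The only delicate point is the calibration of the fibre weight $\delta$. The cross-term $\theta\,\|(x,y)-(x',y')\|$ appearing in the Lipschitz estimate for $F_{ij}$ leaks into the horizontal contribution with coefficient $\delta\theta$, so $\delta$ must be small enough to keep $\delta_{ij}+\delta\theta$ bounded below $1$ uniformly in $(i,j)$, yet strictly positive so that $d_\delta$ remains a genuine metric equivalent to the Euclidean one. The proposed value achieves exactly this balance; the rest of the argument is bookkeeping.
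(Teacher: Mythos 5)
Your argument is correct and coincides with the paper's own proof: the same metric $d_{\delta}$, the same splitting of $d_{\delta}(W_{ij}p,W_{ij}q)$ into the horizontal part bounded by $\delta_{ij}\|(x,y)-(x',y')\|$ and the vertical part handled by inserting $F_{ij}(x',y',z)$ and using \eqref{cons1F}--\eqref{cons2F}, leading to the contraction factor $\max\{\sup_{i,j}(\delta_{ij}+\delta\theta),\|\alpha\|_{\infty}\}<1$ and then Theorem~\ref{Basthm1}. Your explicit calibration $\delta_{ij}+\delta\theta\le\tfrac12$ is just a slightly more detailed justification of the paper's assertion that $\sup_{i,j}(\delta_{ij}+\delta\theta)<1$.
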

\begin{proof}
	For $(x,y,z),(x',y',z')\in X,$ we have
	\begin{equation*}
	\begin{aligned}
	d_{\delta}\big(W_{ij}(x,y,z),W_{ij}(x',y',z')\big) &= \|(u_i(x),v_j(y)) - (u_i(x'),v_j(y'))\|\\&~ + \delta|F_{i,j}(x,y,z) - F_{i,j}(x',y',z')|\\
	\leq&~ \|(u_i(x),v_j(y)) - (u_i(x'),v_j(y'))\|\\& + \delta\big[|F_{i,j}(x,y,z) - F_{i,j}(x',y',z)|\\
	& + |F_{i,j}(x',y',z) - F_{i,j}(x',y',z')|\big]\\
	\leq & \delta_{ij}\|(x,y) - (x',y')\| + \delta\big[\theta\|(x,y) - (x',y')\| + \|\alpha\|_{\infty}|z-z'|\big]\\
	= & (\delta_{ij} + \delta \theta)\|(x,y) - (x',y')\| + \delta \|\alpha\|_{\infty}|z-z'|.
	\end{aligned}
	\end{equation*}
	By the choice of $\delta,$  we have $\sup_{i,j}(\delta_{ij} + \delta \theta) < 1.$ Hence,
	\begin{equation*}
	\begin{aligned}
	d_{\delta}\big(W_{ij}(x,y,z),W_{ij}(x',y',z')\big) \leq &~\max\{\sup_{i,j}(\delta_{ij} + \delta \theta),\|\alpha\|_{\infty}\}\\&~. \big(\|(x,y) - (x',y')\| + \delta|z-z'|\big)\\
	 =&~ \max\{\sup_{i,j}(\delta_{ij} + \delta \theta),\|\alpha\|_{\infty}\}\\&~d_{\delta}\big((x,y,z),(x',y',z')\big),
	\end{aligned}
	\end{equation*}
	completing the proof.
\end{proof}
The following theorem is obtained by modifying and adapting  results on the existence of various types of univariate and bivariate fractal interpolation functions scattered in the literature, see, for instance, \cite{MF1,Dalla,PM,RW,NAS1}. 
\begin{theorem}\label{CBFIF}
	Let $\{X,W_{ij}:(i,j)\in\mathbb{N}\times\mathbb{N}\}$ be the CIFS defined through  (\ref{cons0})- (\ref{CIFS}). Assume that for each $(i,j)\in\mathbb{N}\times\mathbb{N}$ the function $F_{ij}:X\rightarrow K$ further satisfy the following matching conditions
	\begin{enumerate}
		\item for all $i\in\mathbb{N}$ and $x^* = u_i^{-1}(x_i) = u_{i+1}^{-1}(x_{i}),$
		\begin{equation}\label{F1}
		F_{ij}(x^*,y,z) = F_{i+1,j}(x^*,y,z)\hspace{0.5cm} \forall~ y\in J, z\in K, \text{ and}
		\end{equation}
		\item for all $j\in\mathbb{N}$ and $y^* = v_j^{-1}(y_j) = v_{j+1}^{-1}(y_{j}),$
		\begin{equation}\label{F2}
		F_{ij}(x,y^*,z) = F_{i+1,j}(x,y^*,z)\hspace{0.5cm} \forall~ x\in I, z\in K.
		\end{equation}
	\end{enumerate} 
Then there exists a unique continuous function $g:I\times J\rightarrow K$ such that $g(x_i,y_j) = z_{ij}$ for all  $ i,j\in\mathbb{N}_0\times\mathbb{N}_0$ and $G=\{(x,y,g(x,y)):(x,y)\in I\times J\},$ the graph of $g,$ is the attractor of the CIFS defined above.
\end{theorem}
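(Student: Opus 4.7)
The plan is to mirror the Read--Bajraktarevi\'c fixed point strategy, adapted to the countable-data setting in the spirit of Secelean~\cite{NAS2} and combined with the bivariate machinery of~\cite{Ruan}.

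First, I would introduce the space
\begin{equation*}
\mathcal{G} := \{g \in C(I \times J, K) : g(x_i, y_j) = z_{ij}\ \text{for all}\ (i, j) \in \mathbb{N}_0 \times \mathbb{N}_0\}
\end{equation*}
equipped with the uniform metric. The CSD hypotheses guarantee that the limits $z_{\infty j} := \lim_i z_{ij}$, $z_{i\infty} := \lim_j z_{ij}$ and $M = \lim_{i, j} z_{ij}$ all exist, so continuity of any $g \in \mathcal{G}$ automatically forces the boundary values $g(b, y_j) = z_{\infty j}$, $g(x_i, d) = z_{i\infty}$ and $g(b, d) = M$. This realises $\mathcal{G}$ as a nonempty closed, hence complete, subset of $C(I \times J, K)$; a piecewise bilinear interpolant shows non-emptiness.

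Next, I would define the Read--Bajraktarevi\'c-type operator $T\colon \mathcal{G} \to \mathcal{G}$ by
\begin{equation*}
(Tg)(x, y) = F_{ij}\bigl(u_i^{-1}(x),\, v_j^{-1}(y),\, g(u_i^{-1}(x),\, v_j^{-1}(y))\bigr),\qquad (x, y) \in I_i \times J_j,\ (i, j) \in \mathbb{N}^2.
\end{equation*}
Interpolation $(Tg)(x_i, y_j) = z_{ij}$ at every grid point follows from~(\ref{F_kl}), the definition of $\tau$ in~(\ref{cons0}), and the endpoint behaviour of $u_i^{-1}, v_j^{-1}$. Consistency of this piecewise definition along the shared edges $\{x_i\} \times J$ and $I \times \{y_j\}$ of adjacent subrectangles is exactly the content of the matching conditions~(\ref{F1}) and~(\ref{F2}). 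The $z$-contraction bound~(\ref{cons2F}) then yields $\|Tg - Th\|_\infty \leq \|\alpha\|_\infty\,\|g - h\|_\infty$ on $\mathcal{G}$, and since $\|\alpha\|_\infty < 1$, Banach's fixed point theorem delivers the unique $g \in \mathcal{G}$ with $Tg = g$. Rewriting $g = Tg$ as $W_{ij}(\xi, \eta, g(\xi, \eta)) = (u_i(\xi), v_j(\eta), g(u_i(\xi), v_j(\eta)))$ for $(\xi, \eta) \in I \times J$ shows that $W_{ij}(G) = G \cap (I_i \times J_j \times K)$; taking the union over $i, j$ and then the closure, the compactness of $G$ together with continuity of $g$ across the edges $\{b\} \times J$ and $I \times \{d\}$ recovers the boundary portion of $G$, yielding $G = \mathcal{W}(G)$. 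The uniqueness part of the preceding proposition then identifies $G$ with the attractor of the CIFS.

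The principal obstacle, absent in the finite-data case of~\cite{Ruan}, is the verification that $T$ actually maps $\mathcal{G}$ into $\mathcal{G}$: one must check that the formula for $Tg$, which a priori defines it only on $\bigcup_{i, j} I_i \times J_j = [a, b) \times [c, d)$, extends continuously across the ``boundary at infinity'' $(\{b\} \times J) \cup (I \times \{d\})$. My strategy is to use that $(Tg)(x_i, y_j) = z_{ij}$ already tends to $M$ at $(b, d)$ and to $z_{\infty j}, z_{i \infty}$ along the missing edges, and then to iterate the functional equation while combining the uniform spatial Lipschitz bound~(\ref{cons1F}) with the uniform $z$-contraction constant $\|\alpha\|_\infty < 1$ from~(\ref{cons2F}) to squeeze the oscillation of $Tg$ on the shrinking subrectangles $I_i \times J_j$ against these limiting grid values. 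This is precisely the step in which the CSD convergence hypotheses on the double sequence $(z_{ij})$ are used in an essential way, and it constitutes the technical heart of the proof.
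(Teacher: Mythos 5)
Your overall architecture is the same as the paper's: a Read--Bajraktarevi\'c operator on a complete subspace of $\mathcal{C}(I\times J)$, sup-norm contraction with factor $\|\alpha\|_{\infty}$ via (\ref{cons2F}), the Banach fixed point theorem, and then two inclusions (with a closure) identifying the graph with the attractor. The structural deviations are minor: you impose interpolation at every node in the definition of your space, whereas the paper's $\mathcal{C}^*(I\times J)$ imposes only the four corner conditions and then derives $T(h)(x_i,y_j)=z_{ij}$ from (\ref{F_kl}) and (\ref{cons0}); and you define $T$ only on $[a,b)\times[c,d)$ and propose to extend it by continuity, whereas the paper prescribes the values on the limiting edges $x=b$, $y=d$ and at $(b,d)$ directly as limits in the second through fourth branches of (\ref{T}).

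The genuine problem lies in the step you yourself single out as the technical heart, and your proposed mechanism for it would not work. For $(x,y)\in I_i\times J_j$ one has $Tg(x,y)=F_{ij}\bigl(u_i^{-1}(x),v_j^{-1}(y),g(u_i^{-1}(x),v_j^{-1}(y))\bigr)$, and $u_i^{-1}\times v_j^{-1}$ maps the small rectangle $I_i\times J_j$ back onto the whole of $I\times J$. Consequently the only bound that (\ref{cons1F}) and (\ref{cons2F}) yield for the oscillation of $Tg$ on $I_i\times J_j$ is $\theta\,\mathrm{diam}(I\times J)+\|\alpha\|_{\infty}\,\mathrm{osc}_{I\times J}(g)$, a quantity that does not shrink as $i,j\to\infty$; the subrectangles shrink, but the arguments fed into $F_{ij}$ do not. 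Hence the convergence of the nodal values $Tg(x_i,y_j)=z_{ij}$ to $z_{i\infty}$, $z_{\infty j}$, $M$ controls nothing at the non-nodal points of $I_i\times J_j$, and iterating the functional equation does not help, since the same rescaling to the full domain recurs at every iteration. What this step really requires is uniform asymptotic control of $F_{ij}$ itself, for instance that $\sup_{(x,y,z)\in X}\bigl|F_{ij}(x,y,z)-z_{\tau(i,k)\tau(j,l)}\bigr|$ becomes small along the limiting edges; this is exactly the point the paper treats only briefly, by building the edge values into the definition (\ref{T}) and asserting continuity of $T(h)$ from $x_i\to b$, $y_j\to d$, $z_{ij}\to M$. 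So you have located the crux correctly, but your squeezing argument must be replaced by a genuinely different estimate (or by explicit additional hypotheses on the behaviour of $F_{ij}$ for large $i,j$) before the proof is complete.
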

\begin{proof}
We shall denote by $\mathcal{C}(I \times J)$,  the set of all continuous real-valued functions defined on $I \times J$, 	and endow it with the sup-norm. Consider the set $$\mathcal{C}^*(I\times J) =\big\{g\in \mathcal{C}(I\times J): g(x_k,y_l) = z_{kl} \text{ for all }k,l\in\{0,\infty\}\big \}$$
endowed with the uniform metric. Bearing in mind that   $(x_i)_{ i \in \mathbb{N}_0} $ and $(y_j)_{ j \in \mathbb{N}_0} $ are strictly  increasing sequences 
satisfying  $\lim_{ i \to \infty} x_i = x_\infty$ and $\lim_{ j \to \infty} y_j= y_\infty$  we  define $T: \mathcal{C}^*(I\times J)\rightarrow \mathcal{C}^*(I\times J)$ as follows.
For $h \in \mathcal{C}^*(I\times J)$ and $(x,y) \in I\times J$ 
	\begin{equation}\label{T}
	\begin{aligned}
	T(h)(x,y)~:=~
	\begin{cases}
	F_{ij}\big(u_{i}^{-1}(x),v_{j}^{-1}(y),h(u_{i}^{-1}(x),v_{j}^{-1}(y))\big),&~\\ \text{if}~(x,y)\in I_{i}\times J_{j}~\text{for some}~~(i,j)\in \mathbb{N}\times\mathbb{N};\\
	\displaystyle\lim_{j\to \infty}F_{ij}\big(u_{i}^{-1}(x),v_{j}^{-1}({y}_j),h(u_{i}^{-1}(x),v_{j}^{-1}(y_j))\big),~\\ \text{if}~x\in I_{i} ~\text{for some}~i\in \mathbb{N}\text{ and } y = y_{\infty,};\\
	\displaystyle\lim_{i\to \infty}F_{ij}\big(u_{i}^{-1}(x_i),v_{j}^{-1}(y),h(u_{i}^{-1}(x_i),v_{j}^{-1}(y))\big)\\~ \text{if}~x = x_{\infty}\text{ and } y \in J_{j} ~\text{for some}~  j\in\mathbb{N};\\
	z_{\infty\infty},~ \\\text{if}~ x = x_{\infty}\text{ and } y = y_{\infty}.
	\end{cases}
	\end{aligned}
	\end{equation}
  It follows from (\ref{F1}) and (\ref{F2}) that $T(h)$ is well-defined on the boundary of $I_{i}\times J_{j}$ for all $(i,j)\in\mathbb{N}\times\mathbb{N}$. For instance,  let us note the following.  Let  $y_{j-1} \le y \le y_j$. Then $(x_{i-1},y) \in (I_i  \times J_j)  \cap(I_{i-1} \times J_j)$.  Treating $(x_{i-1},y)$ as an element in $I_{i-1}\times J_j$ we have 
\begin{equation}\label{consnew1}
T(h) (x_{i-1},y)= F_{i-1,j}\big(u_{i-1}^{-1}(x_{i-1}),v_{j}^{-1}(y),h(u_{i-1}^{-1}(x_{i-1}),v_{j}^{-1}(y))\big).
\end{equation}
On the other hand, taking $(x_{i-1},y)$ as an element in $I_ i\times J_j$ we get
\begin{equation}\label{consnew2}
T(h) (x_{i-1},y)= F_{ij}\big(u_i^{-1}(x_{i-1}),v_{j}^{-1}(y),h(u_i^{-1}(x_{i-1}),v_{j}^{-1}(y))\big).
\end{equation}
Noting 
\begin{equation*}
	\begin{aligned}
	u_{i-1}^{-1}(x_{i-1})=u_i^{-1}(x_{i-1})=
	\begin{cases}
x_0, ~~~~ \text{if}~i~~~ \text{is odd}\\
x_\infty,~~~~~ \text{if}~i~~~\text{is even}
\end{cases}
\end{aligned}
\end{equation*}
via (\ref{F1}) it follows from (\ref{consnew1}) and  (\ref{consnew2})  that $T(h)(x_{i-1},y)$ is uniquely determined. Similarly, one can prove that $T(h)$  is continuous on $[a,b)\times [c,d).$  
The continuity of $T(h)$ on $[a,b)\times [c,d)$, together with the facts that the increasing sequences  $x_i \to x_\infty$, $y_j \to y_\infty$  and the double sequence  $z_{ij}\rightarrow M$,  yield the continuity of $T(h)$  on $I\times J. $ 
   \par For any $(i,j)\in\mathbb{N}\times\mathbb{N}$, by the condition on $\tau$ given in (\ref{cons0}), we can choose $(k,l)\in\{0,\infty\}\times\{0,\infty\}$ such that $i=\tau(i,k)$ and $j=\tau(j,l).$ By (\ref{cons1}) it follows that $x_k = u_{i}^{-1}(x_i)$ and  $ y_l = v_{j}^{-1}(y_j).$ Using (\ref{F_kl}) and (\ref{T}) 
        $$T(h)(x_i,y_j) = F_{ij}\big(x_k,y_l,h(x_k,y_l)\big) = F_{ij}(x_k,y_l,z_{kl}) = z_{\tau(i,k)\tau(j,l)} = z_{ij}.$$
Continuity of $T(h)$ further implies $T(h)(x_k,y_l) = z_{kl}$ for all $k,l\in\{0,\infty\}.$ Consequently,  $T$ maps $\mathcal{C}^*(I\times J)$ into itself. 
\par Let  $h_1,h_2\in \mathcal{C}^*(I\times J)$. For $(x,y)\in I_{i}\times J_{j}$, where $(i,j)\in\mathbb{N}\times\mathbb{N}$ 
	\begin{equation*}
	\begin{aligned}
	|T(h_1)(x,y) - T(h_2)(x,y)| &= \Big|F_{ij}\big(u_{i}^{-1}(x),v_{j}^{-1}(y),h_1(u_{i}^{-1}(x),v_{j}^{-1}(y))\big)\\
	                        & - F_{ij}\big(u_{i}^{-1}(x),v_{j}^{-1}(y),h_2(u_{i}^{-1}(x),v_{j}^{-1}(y))\big)\Big|\\
	                        &\leq \alpha_{ij}\big|h_1(u_{i}^{-1}(x),v_{j}^{-1}(y)) - h_2(u_{i}^{-1}(x),v_{j}^{-1}(y))\big|\\
	                        &\leq \|\alpha\|_{\infty}\|h_1 - h_2\|_{\infty}.
	\end{aligned}
	\end{equation*}
	Similarly, for all other $ (x,y)$ in $I \times J$. Therefore
	$$\|T(h_1) -T( h_2)\|_{\infty} \leq \|\alpha\|_{\infty}\|h_1 - h_2\|_{\infty}.$$
	That is, $T$ is a contraction map, and  hence by the Banach fixed point theorem it follows that there exists a unique $g\in \mathcal{C}^*(I\times J)$ satisfying 
\begin{equation}\label{self}
	\begin{aligned}
	g(x,y)~:=~
	\begin{cases}
	F_{ij}\big(u_{i}^{-1}(x),v_{j}^{-1}(y),g(u_{i}^{-1}(x),v_{j}^{-1}(y))\big),\\ \text{if}~(x,y)\in I_{i}\times J_{j}~\text{for some}~~(i,j)\in \mathbb{N}\times\mathbb{N};\\
	\displaystyle\lim_{ j\to \infty}F_{ij}\big(u_{i}^{-1}(x),v_{j}^{-1}(y_j),g(u_{i}^{-1}(x),v_{j}^{-1}(y_j))\big), \\\text{if}~x\in I_{i} ~\text{for some}~i\in \mathbb{N}\text{ and } y = y_{\infty;}\\
	\displaystyle\lim_{i\to \infty}F_{ij}(u_{i}^{-1}(x_i),v_{j}^{-1}(y),g(u_{i}^{-1}(x_i),v_{j}^{-1}(y))),\\ \text{if}~x = x_{\infty}\text{ and } y \in J_{j} ~\text{for some}~  j\in\mathbb{N};\\
	z_{\infty\infty}, \\ \text{if}~ x = x_{\infty}\text{ and } y = y_{\infty}.
	\end{cases}
	\end{aligned}
	\end{equation}
Now let $G = \{(x,y,g(x,y)):~ (x,y)\in I\times J\}$ be the graph of $g.$ For $(x,y)\in  [a,b)\times [c,d)$, there exists $ i_0,j_0\in \mathbb{N}\times\mathbb{N}$ such that $(x,y)\in I_{i_0}\times J_{j_0}.$ Thus by (\ref{self}),
	\begin{equation*}
	\begin{aligned}
	\big(x,y,g(x,y)\big) &=\big (x,y, F_{i_0j_0}(u_{i_0}^{-1}(x),v_{j_0}^{-1}(y),g(u_{i_0}^{-1}(x),v_{j_0}^{-1}(y)))\big)\\
	             &= W_{i_0j_0}\big(u_{i_0}^{-1}(x),v_{j_0}^{-1}(y),g(u_{i_0}^{-1}(x),v_{j_0}^{-1}(y))\big)\\ & \in W_{i_0j_0}(G).
	\end{aligned}
	\end{equation*}
	For $x\in [a,b)$, there exists $i_0\in \mathbb{N}$  such that $x \in I_{i_o}$. So,
	\begin{equation*}
	\begin{aligned}
	\big(x,y_\infty,g(x,y_\infty)\big) &= \big(x,\lim_{j\to\infty}y_j, \displaystyle\lim_{j\to \infty}F_{i_0j}(u_{i_0}^{-1}(x),v_{j}^{-1}(y_j),g(u_{i_0}^{-1}(x),v_{j}^{-1}(y_j)))\big)\\
	&=\lim_{j\to\infty} W_{i_0j}\big(u_{i_0}^{-1}(x),v_{j}^{-1}(y_j),g(u_{i_0}^{-1}(x),v_{j}^{-1}(y_j))\big) \\ & \in \overline{\bigcup_{j\geq 1}{W_{i_0j}(G)}}\\& \subset\overline{\bigcup_{i,j\geq 1}{W_{ij}(G)}}.
	\end{aligned}
	\end{equation*}
	Similarly, for any $y\in[c,d)$, we have  $\big(x_\infty,y,g(x_\infty,y)\big)\in\overline{\bigcup_{i,j\geq 1}{W_{ij}(G)}}$  and  $$\big(x_\infty,y_\infty,g(x_\infty,y_\infty)\big)\in\overline{\bigcup_{i,j\geq 1}{W_{ij}(G)}}.$$ Combining these facts we infer that 
	$$G\subset\overline{\bigcup_{i,j\geq 1}{W_{ij}(G)}}.$$
	Conversely, for $(x,y)\in I \times J$ and $(i,j)\in\mathbb{N}\times\mathbb{N}$, we have $u_i(x)\in I_i$ and $v_j(y)\in J_j.$ Therefore
	\begin{equation*}
	\begin{aligned}
	W_{ij}\big(x,y,g(x,y)\big) &= \big (u_i(x),v_j(y),F_{ij}(x,y,g(x,y))\big)\\
	&=  \big(u_i(x),v_j(y),g(u_i(x),v_j(y))\big) \\& \in G.    
	\end{aligned}
	\end{equation*}
	Since $G$ is closed we get
	$$\overline{\bigcup_{i,j\geq 1}{W_{ij}(G)}}\subset G.$$
	Hence,
	$$G = \overline{\bigcup_{i,j\geq 1}{W_{ij}(G)}},$$
	completing the proof.
\end{proof}
\begin{definition}
We refer to the set $G$ in the above theorem as the countable fractal interpolation surface (CFIS) and the bivariate function $g$ as the countable fractal interpolation function (CFIF) associated with the given data.
\end{definition}
\subsection{Some Approximations to Countable FIS}
Here we demonstrate some techniques to approximate the aforementioned countable bivariate fractal interpolation function. 
\begin{theorem}
	Let $g_0\in \mathcal{C}^*(I\times J)$ be arbitrary and $G(g_0)$ be the graph of $g_0$. Then the graph of $T(g_0)$ is $\mathcal{W}\big(G(g_0)\big).$ Moreover, the graph of $T^n(g_0)$ is $\mathcal{W}^n\big(G(g_0)\big)$ for any $n\in\mathbb{N}.$ Consequently,  if $G\big(T^n(g_0)\big)$ denotes the graph of $T^n(g_0),$ then the sequence $\big(G\big(T^n(g_0)\big)\big)_{n\in\mathbb{N}}$ converges to $G(g_0)$ with respect to the Hausdorff metric.
\end{theorem}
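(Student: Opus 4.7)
The plan is to first establish the single-step graph identity $G(T(g_0)) = \mathcal{W}(G(g_0))$, then iterate it by induction, and finally invoke the contraction property of $\mathcal{W}$ established in the preceding proposition to pass to the Hausdorff limit (the stated target $G(g_0)$ should, of course, be read as the attractor $G$, i.e.\ the graph of the CFIF $g$).

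For the one-step identity I would unfold both sides rectangle by rectangle. Fix $(i,j) \in \mathbb{N} \times \mathbb{N}$. By the definition of $W_{ij}$,
\[
W_{ij}(G(g_0)) = \{(u_i(x),v_j(y),F_{ij}(x,y,g_0(x,y))) : (x,y) \in I \times J\}.
\]
Re-parametrizing via $(x',y') := (u_i(x),v_j(y))$, which is a homeomorphism from $I \times J$ onto $I_i \times J_j$, the third coordinate becomes $F_{ij}\big(u_i^{-1}(x'),v_j^{-1}(y'),g_0(u_i^{-1}(x'),v_j^{-1}(y'))\big)$, which is exactly $T(g_0)(x',y')$ by the first branch of \eqref{T}. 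Unioning over $(i,j) \in \mathbb{N} \times \mathbb{N}$ yields the graph of $T(g_0)$ over $\bigcup_{i,j} I_i \times J_j = [a,b) \times [c,d)$. Since $T$ maps $\mathcal{C}^*(I \times J)$ into itself, $T(g_0)$ is continuous on all of $I \times J$, so taking the closure fills in the graph over the missing lines $\{x = x_\infty\}$ and $\{y = y_\infty\}$, producing $G(T(g_0))$. This is the step I expect to be the most delicate: one must verify that the limits defining $T(g_0)$ on the infinite edge agree with the limits arising from closing up $\bigcup_{i,j} W_{ij}(G(g_0))$, using $x_i \to x_\infty$, $y_j \to y_\infty$ and the convergence built into \eqref{T}; the argument runs parallel to the continuity verification carried out in the proof of Theorem \ref{CBFIF}.

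The iterated identity $\mathcal{W}^n(G(g_0)) = G(T^n(g_0))$ then follows by a straightforward induction on $n$: assuming the claim at level $n$, apply the one-step identity with $g_0$ replaced by $T^n(g_0) \in \mathcal{C}^*(I \times J)$ to obtain
\[
\mathcal{W}^{n+1}(G(g_0)) = \mathcal{W}\big(G(T^n(g_0))\big) = G\big(T(T^n(g_0))\big) = G(T^{n+1}(g_0)).
\]
Finally, the preceding proposition shows that $\mathcal{W}$ is a contraction on the complete metric space $(\mathcal{H}(X),h_{d_\delta})$, so Banach's fixed point theorem forces the orbit $\mathcal{W}^n(G(g_0))$ to converge in the Hausdorff metric to the unique attractor; by Theorem \ref{CBFIF} this attractor is the graph $G$ of the CFIF $g$, and the equivalence of $d_\delta$ with the Euclidean metric transfers the convergence to the standard Hausdorff metric on $\mathcal{H}(X)$.
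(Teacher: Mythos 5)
Your proposal is correct and takes essentially the same route as the paper: identify each $W_{ij}\big(G(g_0)\big)$ with the graph of $T(g_0)$ over $I_i\times J_j$ via the first branch of (\ref{T}), recover the edges $x=x_\infty$, $y=y_\infty$ by taking the closure (the paper handles these boundary points by the same limiting argument as in Theorem \ref{CBFIF}), iterate, and conclude with the Banach fixed point theorem for $\mathcal{W}$ on $\mathcal{H}(X)$. Your reading of the stated limit $G(g_0)$ as the attractor $G$ also agrees with the paper's own proof, which concludes convergence of $\mathcal{W}^n\big(G(g_0)\big)$ to the fixed point $G$.
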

\begin{proof} Note that
	\begin{equation*}
	\begin{aligned}
	&(x',y',z')\in\bigcup_{i,j\geq 1}{W_{ij}\big(G(g_0)\big)}\\
	&\Rightarrow (x',y',z')\in W_{ij}\big(G(g_0)\big)\text{ for some } (i,j)\in\mathbb{N}\times\mathbb{N}\\
	&\Rightarrow (x',y',z') = W_{ij}\big(x,y,g_0(x,y)\big)\text{ for some } (x,y)\in I_i\times J_j\\
	&\Rightarrow (x',y',z') = \big(u_i(x),v_j(y),F_{ij}(x,y,g_0(x,y))\big)\\
	&\Rightarrow (x',y',z') = \big(u_i(x),v_j(y),T(g_0)(u_i(x),v_j(y))\big) \in G(T(g_0)). 
	\end{aligned}
	\end{equation*}
	Thus,
	$$\mathcal{W}\big(G(g_0)\big) = \overline{\bigcup_{i,j\geq 1}{W_{ij}(G(g_0))}}\subset G(T(g_0)).$$
	Conversely, let $\big(x,y,T(g_0)(x,y)\big)\in G(T(g_0))$ for some $(x,y)\in [a,b)\times [c,d).$ Then there exists some $(i,j)\in\mathbb{N}\times\mathbb{N}$ such that $(x,y)\in I_i\times J_j,$  and hence there exists $ (x',y')\in I\times J$ such that $(x,y) = \big(u_i(x'),v_j(y')\big).$ Therefore
	\begin{equation*}
	\begin{aligned}
	\big(x,y,T(g_0)(x,y)\big) &= \big(u_i(x'),v_j(y'),T(g_0)(u_i(x'),v_j(y'))\big)\\
	                  &= \big(u_i(x'),v_j(y'),F_{ij}(x',y,'g_0(x'y'))\big)\\
	                  &= W_{ij}\big(x',y',g_0(x'y')\big)\in W_{ij}(G(g_0)).
	\end{aligned}
	\end{equation*}
	Using the above observation,  as in the last theorem, we get $ G\big(T(g_0)\big)\subset\mathcal{W}\big(G(g_0)\big).$ Combining these
	$$ G\big(T(g_0)\big)=\mathcal{W}\big(G(g_0)\big).$$
	Iterating the above steps we get
	$$ G(T^n(f_0))=\mathcal{W}^n\big(G(g_0)\big) \text{ for any } n\in \mathbb{N}.$$
	Now, from the Banach fixed point theorem we know that for any set $A\in\mathcal{H}(X),$ the sequence $(\mathcal{W}^n(A))_{n\in\mathbb{N}}$ converges to the fixed point $G$ with respect to the Hausdorff metric. Taking $A = G(g_0)$ proves the result.
\end{proof}
  The following theorem,  which is in the setting of  a double sequence of compact sets, can be proved verbatim to Theorem 1.1 in \cite{NAS1}. 
 \begin{theorem}\label{thmnewver0}
 	Let $(X,d)$ be a complete metric space and $(K_{m,n})_{m,n\in\mathbb{N}}$ be a double sequence of compact subsets of $X.$ 
 	\begin{enumerate}
 		\item  Assume that $K_{m,n}\subset K_{m+1,n}$, $K_{m,n}\subset K_{m,n+1}$ and  $K_{m,n}\subset K_{m+1,n+1}$ for all $m,n\in\mathbb{N}.$ Further let the set $K:= \bigcup_{m,n\geq 1}{K_{m,n}}$ is relatively compact. Then
 		$$\overline{K} = \overline{\bigcup_{m,n\geq 1}{K_{m,n}}} = \lim_{m,n \to \infty} K_{m,n}.$$
 		\item If $K_{m+1,n}\subset K_{m,n}$, $K_{m,n+1}\subset K_{m,n}$  and  $K_{m+1,n+1}\subset K_{m,n}$ for all $m,n\in\mathbb{N},$ then
 		$$C:= \cap_{m,n\geq 1}{K_{m,n}} = \lim_{m,n} K_{m,n}.$$
 	\end{enumerate}
\end{theorem}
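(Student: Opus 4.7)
The plan is to reduce both parts to estimating each of the two one-sided excesses that make up the Hausdorff distance $d_H$, adapting the classical nested single-sequence argument to the double-index setting. A common simplification is that one of these excesses vanishes by pure inclusion: $K_{m,n} \subset \overline{K}$ in part (1) and $C \subset K_{m,n}$ in part (2), so the work concentrates on the opposite side.

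For part (1), the relative compactness hypothesis makes $\overline{K}$ compact. To prove the nontrivial direction $\sup_{p \in \overline{K}} d(p, K_{m,n}) \to 0$, I would fix $\varepsilon > 0$, cover $\overline{K}$ by finitely many balls $B(p_1,\varepsilon/2),\ldots,B(p_L,\varepsilon/2)$ with $p_l \in \overline{K}$, and, since $\overline{K} = \overline{\bigcup_{m,n} K_{m,n}}$, pick $q_l \in K_{m_l,n_l}$ with $d(p_l,q_l)<\varepsilon/2$. Setting $N_0 = \max_l \max(m_l,n_l)$, the assumed inclusions yield $K_{m_l,n_l} \subset K_{m,n}$ for every $l$ whenever $m,n\geq N_0$, so each $q_l$ lies in $K_{m,n}$ and hence $d(p,K_{m,n})<\varepsilon$ for all $p \in \overline{K}$. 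The first equality in the conclusion is then immediate from the definition of closure.

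For part (2), non-emptiness and compactness of $C$ follow from the finite intersection property applied to the nested compact family (the chain $K_{1,1}\supset K_{2,2}\supset\cdots$ together with the pairwise inclusions suffice). For the excess $\sup_{x \in K_{m,n}} d(x, C) \to 0$, I would argue by contradiction: if the double limit fails, there exist $\varepsilon>0$ and indices $m_k,n_k \geq k$ with $d_H(K_{m_k,n_k},C) \geq \varepsilon$, and by compactness of $K_{m_k,n_k}$ one can select $x_k \in K_{m_k,n_k}$ with $d(x_k,C)\geq \varepsilon$. Since every $x_k$ lies in the compact set $K_{1,1}$, pass to a convergent subsequence $x_{k_j}\to x$. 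For any fixed $(m,n)$, eventually $m_{k_j},n_{k_j}\geq\max(m,n)$, so $x_{k_j}\in K_{m_{k_j},n_{k_j}}\subset K_{m,n}$ by monotonicity; closedness of $K_{m,n}$ then yields $x\in K_{m,n}$, whence $x\in C$, contradicting $d(x,C)\geq \varepsilon$ obtained from continuity of $d(\cdot,C)$.

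The principal obstacle is combinatorial rather than analytic: in the double-index regime one must exploit the assumed inclusions to produce a \emph{single} threshold $N_0$ beyond which monotonicity applies uniformly in both variables, and to fix the correct interpretation of the double limit $\lim_{m,n\to\infty}$ with respect to $d_H$. Once this bookkeeping is in place, the core analytic content coincides with Theorem~1.1 of \cite{NAS1}, which is why the authors assert that the proof transfers verbatim.
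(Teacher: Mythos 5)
Your argument is correct and is essentially the intended one: the paper supplies no proof of this theorem, only the remark that it transfers verbatim from Theorem 1.1 of \cite{NAS1}, and your adaptation (killing one excess by the trivial inclusions $K_{m,n}\subset\overline{K}$, resp.\ $C\subset K_{m,n}$, handling the other by an $\varepsilon/2$-net in part (1) and a compactness/contradiction argument in part (2), with a single threshold $N_0$ controlling both indices) is precisely the single-index proof carried over to the double-sequence setting. In effect you have written out the details the paper leaves to the reader, and they check out.
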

 Let $(X,d)$ be a compact metric space.  For $(i,j)\in\mathbb{N}\times\mathbb{N}$, let $\omega_{ij}:X\rightarrow X$ be contraction maps with contractivity factor $r_{ij}$ such that $\sup_{i,j}r_{ij} < 1.$  Consider the CIFS $\{X,\omega_{ij}:~i\in\mathbb{N},j\in\mathbb{N}\}.$
 For $r \in N$, we denote by $\Sigma_r$, the segment $\{1,2,\dots,r\}$ of $\mathbb{N}$.  Assume  $m,n\in\mathbb{N}$.  Let us call $\{X,\omega_{ij}:~i\in\mathbb{N}_m, j\in\mathbb{N}_n\}$ as the \textit{partial IFS} associated with the CIFS $\{X,\omega_{ij}:~i\in\mathbb{N},j\in\mathbb{N}\}.$ We shall denote the attractor of the aforesaid partial IFS as $A_{m,n}.$ Our aim is to  prove that the double sequence $(A_{m,n})_{m,n\in\mathbb{N}}$ provides an approximation of the attractor of the aforementioned CIFS.   \begin{lemma}\label{inc}
 	Let $m,n\in\mathbb{N},$ then we have $A_{m,n}\subset A_{m+1,n},A_{m,n}\subset A_{m,n+1}$ and $A_{m,n}\subset A_{m+1,n+1}$
 \end{lemma}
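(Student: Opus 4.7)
The plan is to exploit the nested structure of the partial IFSs and iterate the Hutchinson operator of the larger system on the attractor of the smaller one. I shall focus on the first inclusion $A_{m,n}\subset A_{m+1,n}$; the other two follow by exactly the same argument (replacing $\Sigma_m\times\Sigma_n\subset\Sigma_{m+1}\times\Sigma_n$ by $\Sigma_m\times\Sigma_n\subset\Sigma_m\times\Sigma_{n+1}$ or $\subset\Sigma_{m+1}\times\Sigma_{n+1}$).

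First, let $\mathcal{W}_{m,n}:\mathcal{H}(X)\to\mathcal{H}(X)$ denote the Hutchinson operator for the partial IFS $\{X,\omega_{ij}:i\in\Sigma_m,j\in\Sigma_n\}$, i.e.\ $\mathcal{W}_{m,n}(B)=\bigcup_{(i,j)\in\Sigma_m\times\Sigma_n}\omega_{ij}(B)$. Since $\Sigma_m\times\Sigma_n\subset\Sigma_{m+1}\times\Sigma_n$, we get the pointwise (in $B$) inclusion $\mathcal{W}_{m,n}(B)\subset\mathcal{W}_{m+1,n}(B)$ for every $B\in\mathcal{H}(X)$. Applying this with $B=A_{m,n}$ and recalling that $A_{m,n}$ is the fixed set of $\mathcal{W}_{m,n}$ yields
\[
A_{m,n}=\mathcal{W}_{m,n}(A_{m,n})\subset\mathcal{W}_{m+1,n}(A_{m,n}).
\]

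Second, because $\mathcal{W}_{m+1,n}$ is monotone under set inclusion, iteration produces a nondecreasing chain
\[
A_{m,n}\subset\mathcal{W}_{m+1,n}(A_{m,n})\subset\mathcal{W}_{m+1,n}^2(A_{m,n})\subset\cdots.
\]
Since $\{X,\omega_{ij}:i\in\Sigma_{m+1},j\in\Sigma_n\}$ is a finite hyperbolic IFS with contractivity factor at most $\sup_{i,j}r_{ij}<1$, the operator $\mathcal{W}_{m+1,n}$ is a contraction on $(\mathcal{H}(X),h)$ with unique fixed point $A_{m+1,n}$, so $\mathcal{W}_{m+1,n}^k(A_{m,n})\to A_{m+1,n}$ in the Hausdorff metric as $k\to\infty$.

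Third, to pass from the Hausdorff limit to the desired set inclusion, fix any $x\in A_{m,n}$. Then $x\in\mathcal{W}_{m+1,n}^k(A_{m,n})$ for every $k\in\mathbb{N}_0$, so
\[
d(x,A_{m+1,n})\leq h\bigl(\mathcal{W}_{m+1,n}^k(A_{m,n}),A_{m+1,n}\bigr)\longrightarrow 0.
\]
Closedness of $A_{m+1,n}$ then gives $x\in A_{m+1,n}$, completing the argument. Alternatively, the nondecreasing chain together with Theorem \ref{thmnewver0}(1) identifies its Hausdorff limit with $\overline{\bigcup_k\mathcal{W}_{m+1,n}^k(A_{m,n})}\supset A_{m,n}$, which is the same conclusion.

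There is no real obstacle here: the proof is essentially a monotonicity-and-iteration argument. The only point requiring a bit of care is the transition from membership in every element of the Hausdorff-convergent sequence to membership in the limit, which is why I prefer the explicit distance estimate in step three over any vaguer appeal to ``Hausdorff convergence preserves containment''.
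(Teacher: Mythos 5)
Your proof is correct, but it follows a genuinely different route from the paper's. The paper proves the inclusions by invoking Hutchinson's characterization of the attractor of a finite hyperbolic IFS as the closure of the set of fixed points of finite compositions of its maps: it forms the composed contractions $\omega_{i_1\ldots i_m,j_1\ldots j_n}$ with indices drawn from $\Sigma_m\times\Sigma_n$, writes $A_{m,n}$ as the closure of the collection of their fixed points, and reads off the inclusions from the inclusion of index sets. You instead work directly with the Hutchinson operators: from $A_{m,n}=\mathcal{W}_{m,n}(A_{m,n})\subset\mathcal{W}_{m+1,n}(A_{m,n})$ and monotonicity you obtain the nondecreasing chain $\mathcal{W}_{m+1,n}^k(A_{m,n})$, use the Banach fixed point theorem on $(\mathcal{H}(X),h)$ to get convergence of these iterates to $A_{m+1,n}$, and then pass to the set inclusion via the estimate $d(x,A_{m+1,n})\le h\bigl(\mathcal{W}_{m+1,n}^k(A_{m,n}),A_{m+1,n}\bigr)$ together with closedness of $A_{m+1,n}$ --- a point you rightly handle explicitly, since containment in every term of a Hausdorff-convergent sequence does not transfer to the limit without the monotone structure you established. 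What each approach buys: the paper's argument is a one-line consequence once the density-of-fixed-points characterization is accepted (and it is stated there in a somewhat nonstandard fixed-word-length form), whereas yours is self-contained, needing only the fixed-point equation, monotonicity of the Hutchinson map, and completeness of $\mathcal{H}(X)$, so it is arguably the more elementary and robust of the two.
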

\begin{proof}
	Let $i_1,i_2,\ldots, i_m\in\{1,2,\ldots, m\},~j_1,j_2,\ldots, j_n\in\{1,2,\ldots, n\}.$ Define functions $\omega_{i_1i_2\ldots i_m,j_1j_2\ldots j_n} :X\rightarrow X$ by
	\begin{equation*}
	\omega_{i_1i_2\ldots i_m,j_1j_2\ldots j_n}(x) = \omega_{i_1j_1}\circ\ldots\circ \omega_{i_1j_n}\circ \omega_{i_2j_1}\circ\ldots\circ \omega_{i_2j_n}\circ\ldots \circ \omega_{i_mj_1}\circ\ldots\circ \omega_{i_mj_m}(x).
	\end{equation*}
	Obviously $\omega_{i_1i_2\ldots i_m,j_1j_2\ldots j_n}$ is a contraction map with contractivity factor at most  $\prod_{k=1}^{k=m}\prod_{l=1}^{l=n}r_{{i_k}{j_l}}<1.$ Let $a_{i_1i_2\ldots i_m,j_1j_2\ldots j_n}$ be its unique fixed point, then following \cite{JH} we have\begin{equation*}
	A_{m,n} = \overline{\{a_{i_1i_2\ldots i_m,j_1j_2\ldots j_n}: i_1,i_2,\ldots, i_m\in\mathbb{N}_m,~j_1,j_2,\ldots, j_n\in\mathbb{N}_n\}}.
	\end{equation*}
	The proof follows immediately from the above observation.
\end{proof}

\begin{lemma}[Lemma 2.2, \cite{NAS1}]\label{family}
	If $(E_{\lambda})_{\lambda\in\Lambda}$ is a family of subsets  of a topological space, then
	\begin{equation*}
	\overline{\bigcup_{\lambda\in\Lambda}\overline{E_{\lambda}}} = \overline{\bigcup_{\lambda\in\Lambda}E_{\lambda}}.
	\end{equation*}
\end{lemma}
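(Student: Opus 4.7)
The plan is to prove the identity $\overline{\bigcup_{\lambda\in\Lambda}\overline{E_\lambda}} = \overline{\bigcup_{\lambda\in\Lambda}E_\lambda}$ by establishing the two inclusions separately, using only the monotonicity and idempotence of the closure operator together with the fact that the closure of a set is the smallest closed set containing it.

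For the inclusion $\overline{\bigcup_{\lambda\in\Lambda}E_\lambda} \subseteq \overline{\bigcup_{\lambda\in\Lambda}\overline{E_\lambda}}$, I would observe that $E_\lambda \subseteq \overline{E_\lambda}$ for every $\lambda \in \Lambda$, so that $\bigcup_{\lambda} E_\lambda \subseteq \bigcup_{\lambda} \overline{E_\lambda}$, and then take closures on both sides using the fact that $A \subseteq B$ implies $\overline{A} \subseteq \overline{B}$. This direction is essentially automatic.

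For the reverse inclusion, I would fix an arbitrary $\lambda_0 \in \Lambda$ and note that $E_{\lambda_0} \subseteq \bigcup_{\lambda} E_\lambda \subseteq \overline{\bigcup_{\lambda} E_\lambda}$. Since $\overline{\bigcup_{\lambda} E_\lambda}$ is closed and $\overline{E_{\lambda_0}}$ is the smallest closed set containing $E_{\lambda_0}$, we obtain $\overline{E_{\lambda_0}} \subseteq \overline{\bigcup_{\lambda} E_\lambda}$. Because $\lambda_0$ was arbitrary, this gives $\bigcup_{\lambda}\overline{E_\lambda} \subseteq \overline{\bigcup_{\lambda} E_\lambda}$, and taking closures on both sides (invoking the idempotence $\overline{\overline{A}} = \overline{A}$ on the right) yields $\overline{\bigcup_{\lambda}\overline{E_\lambda}} \subseteq \overline{\bigcup_{\lambda} E_\lambda}$.

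There is no substantive obstacle here; the statement is a general point-set topology fact that holds for arbitrary (possibly uncountable) index sets and arbitrary topological spaces. The only subtlety worth flagging in the write-up is the explicit use of the characterization of $\overline{E_{\lambda_0}}$ as the smallest closed superset of $E_{\lambda_0}$, since this is the mechanism that lets one pass from $E_{\lambda_0}$ being contained in a closed set to $\overline{E_{\lambda_0}}$ being contained in it. Since the authors are citing this as Lemma 2.2 of \cite{NAS1}, a one-paragraph proof combining the two inclusions above should suffice.
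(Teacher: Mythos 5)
Your proof is correct: both inclusions are argued properly, and the use of $\overline{E_{\lambda_0}}$ being the smallest closed set containing $E_{\lambda_0}$ is exactly the right mechanism for the nontrivial direction. The paper itself gives no proof of this lemma --- it is simply quoted as Lemma 2.2 of the cited reference \cite{NAS1} --- so your elementary two-inclusion argument is the standard one and there is nothing to reconcile.
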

The following theorem is analogous to a result in  \cite{NAS1}. 
\begin{theorem}
The set $A  = \overline{\bigcup_{m,n\geq 1}{A_{m,n}}}\in\mathcal{H}(X)$ is the attractor of the CIFS $\{X,\omega_{ij}:~i\in\mathbb{N},j\in\mathbb{N}\}.$
\end{theorem}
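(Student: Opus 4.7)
The plan is to characterize $A$ as the unique fixed point of the Hutchinson-type operator $\mathcal{W}(B) = \overline{\bigcup_{i,j\ge 1}\omega_{ij}(B)}$ acting on $(\mathcal{H}(X),h)$, where $h$ denotes the Hausdorff metric. Since $X$ is compact, $A$ is non-empty and closed, hence $A\in\mathcal{H}(X)$; and because $\sup_{i,j} r_{ij}<1$, the operator $\mathcal{W}$ is a contraction on $(\mathcal{H}(X),h)$, so its fixed point is unique by Banach's theorem. Moreover, Lemma \ref{inc} together with Theorem \ref{thmnewver0}(1) (taking $K_{m,n}=A_{m,n}$, which is a relatively compact union since everything sits inside the compact space $X$) identifies $A$ as the Hausdorff limit of the double sequence $(A_{m,n})$; this representation will be the bridge between finite self-similarity of the $A_{m,n}$ and the desired self-similarity of $A$.

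For the inclusion $A\subseteq\mathcal{W}(A)$: each $A_{m,n}$ is the attractor of the partial IFS $\{X,\omega_{ij}:i\in\Sigma_m, j\in\Sigma_n\}$, so the Hutchinson identity gives $A_{m,n}=\bigcup_{i=1}^{m}\bigcup_{j=1}^{n}\omega_{ij}(A_{m,n})$. Because $A_{m,n}\subseteq A$ by Lemma \ref{inc}, monotonicity of the image under $\omega_{ij}$ yields $A_{m,n}\subseteq \bigcup_{i,j\ge 1}\omega_{ij}(A)$. Taking unions over $(m,n)$ and closures leads immediately to $A\subseteq \mathcal{W}(A)$.

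For the reverse inclusion $\mathcal{W}(A)\subseteq A$, I would fix $(i,j)\in\mathbb{N}\times\mathbb{N}$ and an arbitrary $x\in A$, and choose a sequence $(x_k)$ with $x_k\in A_{m_k,n_k}$ and $x_k\to x$. Using Lemma \ref{inc}, I can replace $(m_k,n_k)$ by any larger pair and therefore assume $m_k\ge i$ and $n_k\ge j$ for all $k$. The self-similarity of $A_{m_k,n_k}$ then forces $\omega_{ij}(A_{m_k,n_k})\subseteq A_{m_k,n_k}\subseteq A$, so $\omega_{ij}(x_k)\in A$. Continuity of $\omega_{ij}$ together with closedness of $A$ gives $\omega_{ij}(x)\in A$. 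Hence $\omega_{ij}(A)\subseteq A$ for every $(i,j)$, and so $\mathcal{W}(A)\subseteq A$, completing the verification that $A$ is the attractor.

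The principal technical subtlety lies in the reverse inclusion: the self-similarity $\omega_{ij}(A_{m,n})\subseteq A_{m,n}$ is only available once $m\ge i$ and $n\ge j$, so one must use the monotonicity of Lemma \ref{inc} to route sequential limits through approximants of sufficiently large index and exploit continuity of $\omega_{ij}$ at that point. Everything else (compactness of $A$, the contraction property of $\mathcal{W}$, and the forward inclusion) is routine once the right fixed-point setup is in place.
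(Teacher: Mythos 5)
Your proof is correct, but it reaches the fixed--point identity by a different route than the paper. The paper works entirely at the level of sets and closures: it splits $\overline{\bigcup_{i,j\geq 1}\omega_{ij}(A_{m,n})}$ into $A_{m,n}$ plus a tail (equation (\ref{set1})), interchanges the unions over $(i,j)$ and $(m,n)$ using Lemma \ref{family}, absorbs the tail into $\overline{\bigcup_{m,n}A_{m,n}}$ via Lemma \ref{inc}, and then uses continuity only in the form $\omega_{ij}(\overline{S})\subseteq\overline{\omega_{ij}(S)}$ to squeeze out both inclusions from the single identity (\ref{set5}). You instead verify the two inclusions separately and more concretely: the forward one from the finite Hutchinson identity $A_{m,n}=\bigcup_{i\le m,j\le n}\omega_{ij}(A_{m,n})$ plus monotonicity (note that $A_{m,n}\subseteq A$ is immediate from the definition of $A$; Lemma \ref{inc} is not needed there), and the reverse one by a pointwise sequential argument, routing $x\in A$ through approximants $A_{m_k,n_k}$ with $m_k\ge i$, $n_k\ge j$ and using continuity of $\omega_{ij}$ together with closedness of $A$. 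Your reverse inclusion is more elementary and avoids the union--closure bookkeeping (incidentally sidestepping the incomplete index splitting displayed in (\ref{set1}), where the terms with $i\le m$, $j>n$ and $i>m$, $j\le n$ are omitted), while the paper's manipulation yields the equality $\mathcal{W}(A)=A$ in one chain without any sequence extraction. You also make explicit the uniqueness of the set fixed point (hence the right to call $A$ \emph{the} attractor) by invoking the contraction of $\mathcal{W}$ on $(\mathcal{H}(X),h)$; the paper leaves this implicit, relying on the hyperbolic CIFS theory of Secelean cited in Theorem \ref{Basthm1}, so if you keep that step you should either cite that theory or include the short estimate showing $\mathcal{W}$ is a contraction.
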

\begin{proof}
	We have
	\begin{equation}\label{set1}
	\begin{aligned}
	\overline{\bigcup_{i,j\geq 1}\omega_{ij}(A_{m,n})} &= \overline{\bigcup_{i=1}^{m}\bigcup_{j=1}^{n}\omega_{ij}(A_{m,n})} \bigcup \overline{\bigcup_{i=m+1}^{\infty}\bigcup_{i=n+1}^{\infty}\omega_{ij}(A_{m,n})}\\
	&= A_{m,n}\bigcup \overline{\bigcup_{i=m+1}^{\infty}\bigcup_{i=n+1}^{\infty}\omega_{ij}(A_{m,n})}.
	\end{aligned}
	\end{equation}
	By Lemma \ref{family} we have\\
	\begin{equation}\label{set2}
	\begin{aligned}
	\overline{\bigcup_{m,n\geq 1}\bigcup_{i,j\geq 1}\omega_{ij}(A_{m,n})} = \overline{\bigcup_{m,n\geq 1}\overline{\bigcup_{i,j\geq 1}\omega_{ij}(A_{m,n})}}
	\end{aligned}
	\end{equation}
	Thus, using (\ref{set1}) and (\ref{set2}) we have
	\begin{equation}\label{set3}
	\begin{aligned}
	\overline{\bigcup_{i,j\geq 1}\bigcup_{m,n\geq 1}\omega_{ij}(A_{m,n})} &= \overline{\bigcup_{m,n\geq 1}\bigcup_{i,j\geq 1}\omega_{ij}(A_{m,n})}\\
	 &= \overline{\bigcup_{m,n\geq 1}\overline{\bigcup_{i,j\geq 1}\omega_{ij}(A_{m,n})}}\\
	 &= \overline{\bigcup_{m,n\geq 1}\Big(A_{m,n}\bigcup\overline{\bigcup_{i\geq m+1}\bigcup_{j\geq n+1}\omega_{ij}(A_{m,n})}}\Big)\\
	 &= \overline{\bigcup_{m,n\geq 1}A_{m,n}}\bigcup\overline{\bigcup_{m,n\geq 1}\bigcup_{i\geq m+1}\bigcup_{j\geq n+1}\omega_{ij}(A_{m,n})}\\
	\end{aligned}
	\end{equation}
	By Lemma \ref{inc} we have\\
	\begin{equation}\label{set4}
	\overline{\bigcup_{m,n\geq 1}\bigcup_{i\geq m+1}\bigcup_{j\geq n+1}\omega_{ij}(A_{m,n})}\subset\overline{\bigcup_{m,n\geq 1}A_{m,n}}.
	\end{equation}
	Combining (\ref{set3})  and (\ref{set4}) we get
	\begin{equation}\label{set5}
	\overline{\bigcup_{i,j\geq 1}\bigcup_{m,n\geq 1}\omega_{ij}(A_{m,n})} = \overline{\bigcup_{m,n\geq 1}A_{m,n}}.
	\end{equation}
	Now, using the continuity of $\omega_{ij}$ for each $i,j\in\mathbb{N},$  (\ref{set5}) and Lemma \ref{family} we have
	\begin{equation*}
	\begin{aligned}
	\overline{\bigcup_{i,j\geq 1}\omega_{ij}\Big(\overline{\bigcup_{m,n\geq 1}{A_{m,n}}}\Big)} &\subset \overline{\bigcup_{i,j\geq 1}\overline{\omega_{ij}\Big(\bigcup_{m,n\geq 1}{A_{m,n}}\Big)}}\\
	&= \overline{\bigcup_{i,j\geq 1}\omega_{ij}\Big(\bigcup_{m,n\geq 1}{A_{m,n}}\Big)}\\
	 &= \overline{\bigcup_{i,j\geq 1}\bigcup_{m,n\geq 1}\omega_{ij}(A_{m,n})}= \overline{\bigcup_{m,n\geq 1}{A_{m,n}}}\\
	\end{aligned}
	\end{equation*}
	Thus, 
	\begin{equation}\label{set6}
	\overline{\bigcup_{i,j\geq 1}\omega_{ij}\Big(\overline{\bigcup_{m,n\geq 1}{A_{m,n}}}\Big)}\subset \overline{\bigcup_{m,n\geq 1}{A_{m,n}}}.
	\end{equation}
	Conversely using (\ref{set5}) we have
	\begin{equation*}
	\begin{aligned}
	\overline{\bigcup_{m,n\geq 1}{A_{m,n}}} &= \overline{\bigcup_{i,j\geq 1}\bigcup_{m,n\geq 1}\omega_{ij}(A_{m,n})}\\
	&= \overline{\bigcup_{i,j\geq 1}\omega_{ij}\Big(\bigcup_{m,n\geq 1}{A_{m,n}}\Big)}\\
	&\subset \overline{\bigcup_{i,j\geq 1}\omega_{ij}\Big(\overline{\bigcup_{m,n\geq 1}{A_{m,n}}}\Big)}\\
	\end{aligned}
	\end{equation*}
	That is, 
	\begin{equation}\label{set7}
	\overline{\bigcup_{m,n\geq 1}{A_{m,n}}}\subset \overline{\bigcup_{i,j\geq 1}\omega_{ij}\Big(\overline{\bigcup_{m,n\geq 1}{A_{m,n}}}\Big)}
	\end{equation}
Equations (\ref{set6}) and (\ref{set7}) together yield the following
	\begin{equation*}
	\overline{\bigcup_{i,j\geq 1}\omega_{ij}\Big(\overline{\bigcup_{m,n\geq 1}{A_{m,n}}}\Big)} = \overline{\bigcup_{m,n\geq 1}{A_{m,n}}},
	\end{equation*}
	completing the proof.
\end{proof}	
The above theorem in conjunction with Theorem \ref{thmnewver0} and Lemma \ref{inc} provides the promised approximation of the attractor of CIFS by the attractors of the partial IFSs. To be precise, 
\begin{theorem}
	Let  $A$ be the attractor of the CIFS $\{X,\omega_{ij}:~i\in\mathbb{N},j\in\mathbb{N}\}$  and  $(A_{m,n})$ be the double sequence of attractors of the associated  partial IFSs $\{X,\omega_{ij}:~i\in\mathbb{N}_m, j\in\mathbb{N}_n\}$.  Then 
	$$A = \lim_{m,n}A_{m,n},$$
	where the  limit is taken with respect to the Hausdorff metric.
\end{theorem}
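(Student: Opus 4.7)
The strategy is to combine three results already in place: the preceding theorem, which identifies $A = \overline{\bigcup_{m,n\geq 1} A_{m,n}}$; Lemma \ref{inc}, which supplies the monotonicity inclusions $A_{m,n} \subset A_{m+1,n}$, $A_{m,n} \subset A_{m,n+1}$, and $A_{m,n} \subset A_{m+1,n+1}$; and Theorem \ref{thmnewver0}(1), which converts such a monotone double sequence of compact sets (whose union is relatively compact) into a Hausdorff-convergent one whose limit equals the closure of its union.

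First I would set $K_{m,n} := A_{m,n}$, noting that each $A_{m,n}$ is compact since it is the attractor of a finite hyperbolic IFS on $X$. Applying Lemma \ref{inc} directly verifies the three monotonicity hypotheses required by Theorem \ref{thmnewver0}(1). Next I would observe that $\bigcup_{m,n\geq 1} A_{m,n}$ is relatively compact: since the ambient space $X$ is compact by the standing hypothesis on the CIFS, every subset of $X$ is automatically relatively compact, and no further argument is required at this step.

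With the hypotheses of Theorem \ref{thmnewver0}(1) in hand, I would conclude
\[
\overline{\bigcup_{m,n\geq 1} A_{m,n}} \;=\; \lim_{m,n\to\infty} A_{m,n}
\]
in the Hausdorff metric, and then invoke the previous theorem (identifying the left-hand side with the CIFS attractor $A$) to obtain $A = \lim_{m,n\to\infty} A_{m,n}$, as required.

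In short, the proof is a bookkeeping argument: all the substantive content has been quarantined into Lemma \ref{inc} (via the Hutchinson-style coding of attractor points by fixed points of long compositions) and into Theorem \ref{thmnewver0} itself. The only conceivable obstacle is verifying relative compactness of $\bigcup_{m,n} A_{m,n}$, which is made trivial here by compactness of $X$; in a general complete metric space one would instead need a separate argument bounding the diameters of the $A_{m,n}$ uniformly, and this is where any real work would concentrate.
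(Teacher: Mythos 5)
Your proposal is correct and follows essentially the same route as the paper, which likewise presents this theorem as an immediate consequence of the preceding theorem (identifying $A = \overline{\bigcup_{m,n\geq 1} A_{m,n}}$), Lemma \ref{inc}, and Theorem \ref{thmnewver0}(1), with relative compactness being automatic from the compactness of $X$.
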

As a special case of the previous theorem, we have the following.
\begin{corollary}
	For $m,n\in\mathbb{N}$ consider the partial IFS $\{X,W_{ij}:~i\in\Sigma_m,j\in\Sigma_n\}$ associated with the CIFS $\{X,W_{ij}:~i\in\mathbb{N},j\in\mathbb{N}\},$ (Cf. (\ref{CIFS})) defined in the construction of our countable FIS. Let $G_{m,n}$ be the attractor of $\{X,W_{ij}:~i\in\mathbb{N}_m,j\in\mathbb{N}_n\}$ and $G$ be the CFIS obtained in Theorem \ref{cons1}. Then we have
	$$\lim_{m,n} G_{m,n} = G,$$
	where the limit is taken with respect to Hausdorff metric.
\end{corollary}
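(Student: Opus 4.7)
The plan is to derive the corollary as a direct specialization of the preceding theorem to the concrete CIFS $\{X, W_{ij}: (i,j) \in \mathbb{N}\times\mathbb{N}\}$ constructed in Section \ref{sec3}. To invoke that theorem with $\omega_{ij} := W_{ij}$, I only need to verify two ingredients: first, that this CIFS is hyperbolic in the sense required; and second, that its attractor coincides with the CFIS $G$ produced in Theorem \ref{CBFIF}. Once these are in hand, the substitutions $A := G$ and $A_{m,n} := G_{m,n}$ convert the conclusion of the preceding theorem into exactly the claim.

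Hyperbolicity is already packaged in the Proposition of Section \ref{sec3}: on the compact metric space $(X, d_\delta)$, each $W_{ij}$ is a contraction with Lipschitz constant at most
\[
r := \max\bigl\{\sup_{i,j}(\delta_{ij} + \delta\theta),\,\|\alpha\|_\infty\bigr\} < 1,
\]
so the common contractivity bound $\sup_{i,j} r_{ij} \leq r < 1$ demanded by the hypothesis of the preceding theorem is met. Since $d_\delta$ is equivalent to the Euclidean metric, this is genuine hyperbolicity in the framework used throughout the paper.

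To identify the attractor, I would appeal to Theorem \ref{CBFIF}: it produces a continuous $g: I \times J \to K$ whose graph $G$ satisfies $G = \overline{\bigcup_{i,j \geq 1} W_{ij}(G)} = \mathcal{W}(G)$. Uniqueness of this set fixed point is standard: the Hutchinson-type operator $\mathcal{W}$ is a contraction on the complete metric space $(\mathcal{H}(X), d_H)$ with Lipschitz constant $r < 1$, so the Banach fixed point theorem forces any fixed point to be unique. Hence $G$ is precisely the attractor of $\{X, W_{ij}\}$.

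Combining these two observations and applying the preceding theorem directly yields $G = \lim_{m,n} G_{m,n}$ in the Hausdorff metric. There is no genuine obstacle: the substantive work has already been carried out in the Proposition (providing hyperbolicity) and in Theorem \ref{CBFIF} (providing the identification $G = $ attractor), so the corollary amounts to assembling these pieces and quoting the previous theorem.
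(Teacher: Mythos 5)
Your proposal is correct and matches the paper's intent: the paper states this corollary without a separate proof, treating it as a direct specialization of the preceding theorem, which is exactly what you do. Your added details --- hyperbolicity of $\{X,W_{ij}\}$ with respect to $d_\delta$ from the Proposition of Section \ref{sec3}, and uniqueness of the set fixed point via the contractivity of $\mathcal{W}$ on $(\mathcal{H}(X),d_H)$ so that the CFIS $G$ of Theorem \ref{CBFIF} is indeed the attractor --- are precisely the (implicit) justification the paper relies on.
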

\section{A Parameterized Family of Bivariate Fractal Functions and Associated  Fractal Operator}
As mentioned in the introduction, with an intension to explore some approximation theoretic aspects,  we consider here a special case of the countatable  bivariate  FIF constructed in the previous section. 
\begin{definition}
	Let $I \times J = [a,b]  \times [c,d] \subset \mathbb{R}^2$. We say that  $\Delta = \{x_i: i=0,1,2,\ldots\} \times \{y_j: j=0,1,2, \ldots\} \subset I \times J$ is  a partition of $I \times J$ if the sequences $(x_i)_{i\geq 0}$ and  $(y_j)_{j\geq 0}$ are such that $x_0=a$, $y_0=c$, both are strictly increasing, and converge to $b$ and $d$ respectively.
\end{definition}
	Let   $ \text{Lip}(I\times J) \subset \mathcal{C} (I\times J) $ denote the set of all Lipschitz continuous real-valued functions defined on  $I\times J $. That is,  $f \in \text{Lip}(I\times J) $, if there exists a constant $l_f>0$ such that 
$$ |f(x,y) - f(s,t)| \le l_f \| (x,y) - (s,t)\| ~~\forall ~~(x,y), (s,t) \in I \times J.$$
Let $f \in \text{Lip}(I\times J),$ be an arbitrary but fixed function, which we refer to as the germ  function or seed function. Consider the  countable data set  $D = \{(x_i,y_j,f(x_i,y_j)): i,j \in \mathbb{N}_0\}.$

\begin{definition}
	A scale function $\alpha = (\alpha_{ij})_{i,j\in\mathbb{N}}$ is a double sequence of Lipschitz functions on $I \times J$ such that 
	\begin{enumerate}
		\item $\sup_{i,j} |\alpha_{ij}| < \infty,$ where $|\alpha_{ij}|$ denotes the Lipschitz constant of $\alpha_{ij}.$
		\item  $\|\alpha\|_{\infty} := \sup_{i,j} \|\alpha_{ij}\|_{\infty} < 1,$ where $\|\alpha_{ij}\|_{\infty}$ denotes the sup-norm of $\alpha_{ij}.$
	\end{enumerate}
\end{definition}
We construct a class of bivariate FIFs corresponding to the countable data  $D = \{(x_i,y_j,f(x_i,y_j)): i,j \in \mathbb{N}_0\}$  by choosing appropriate maps $u_i$, $v_j$ and $F_{ij}$  in the countable IFS defined in the previous section. 
For $i \in \mathbb{N}$, let us take the function $u_i: I \to I_i$ as
 $$u_i(x) = a_ix + c_i,$$ 
 where $a_i$ and $c_i$ are chosen such that (\ref{cons1}) is satisfied. Similarly, for $j \in \mathbb{N}$ define $v_j:J \to J_j$ by 
 $$v_j(y) = b_jy + d_j,$$
 where $b_j$ and $d_j$ are chosen such that (\ref{cons3}) is satisfied. Assume that  $L: \text{Lip}(I\times J) \to \text{Lip}(I\times J)$ is an operator satisfying the boundary conditions
\begin{equation}
L(f)(x_k,y_l) = f(x_k,y_l) \quad \text{ for all } \quad k,l \in \{0,\infty\}.
\end{equation}
Finally,  let us take the function $F_{ij}: I \times J \times K \to K$ as
 \begin{equation}\label{F_{ij}}
 F_{ij}(x,y,z) = \alpha_{ij}(u_i(x),v_j(y))z + f(u_i(x),v_j(y)) - \alpha_{ij}(u_i(x),v_j(y))L(f)(x,y).
 \end{equation}
Some routine computations will show that  the functions $ F_{ij}$  in (\ref{F_{ij}}) satisfy the conditions (\ref{F_kl})-(\ref{cons2F}) and the matching conditions (\ref{F1})-(\ref{F2}) specified in Theorem \ref{CBFIF}. Ergo, by Theorem \ref{CBFIF} there exists a bivariate fractal interpolation function  interpolating the countable data set $\{(x_i,y_j,f(x_i,y_j)): i,j \in \mathbb{N}_0\}$ obtained by sampling the germ function $f$ at countable number of points in $I \times J$.  We denote this bivariate fractal interpolation function corresponding to the germ function $f$ by  $f^{\alpha}_{\Delta,L}$.
\begin{definition}
The aforementioned function  $f^{\alpha}_{\Delta,L}$ is termed as the (countable bivariate) $\alpha$-fractal function associated to the germ function $f$ with respect to the parameters $\alpha,~\Delta$ and $L.$
In view of  Theorem \ref{CBFIF} it is evident that $f^{\alpha}_{\Delta,L}$  satisfies 
\begin{equation}\label{FPeq}
	f^{\alpha}_{\Delta,L}(x,y) =
	\begin{aligned}
	\begin{cases}
	f(x,y) + \alpha_{ij}(x,y)\big[f^{\alpha}_{\Delta,L}(u_{i}^{-1}(x),v_{j}^{-1}(y)) - L(f)(u_{i}^{-1}(x),v_{j}^{-1}(y))\big]\\ ~\text{if}~(x,y)\in I_{i}\times J_{j};\\
	
	f(x,d) + \alpha_{ij}(x,d)\displaystyle\lim_{j \to \infty}\big[f^{\alpha}_{\Delta,L}(u_{i}^{-1}(x),v_{j}^{-1}(y_j)) - L(f)(u_{i}^{-1}(x),v_{j}^{-1}(y_j))\big]\\ \text{if}~~x\in I_{i},~y = d;\\
	
	f(b,y) + \alpha_{ij}(b,y)\displaystyle\lim_{i \to  \infty}\big[f^{\alpha}_{\Delta,L}(u_{i}^{-1}(x_i),v_{j}^{-1}(y)) - L(f)(u_{i}^{-1}(x_i),v_{j}^{-1}(y))\big],\\\text{if}~x = b,~y \in J_{j};\\
	
	f(b,d),\\ \text{if}~~x = b \text{ and } y =d.
	
	\end{cases}.
	\end{aligned}
	\end{equation}
\end{definition}
\begin{remark}
In fact, $\{f^{\alpha}_{\Delta,L}\}$ provides a family of fractal functions, obtained for different choices of parameters $\alpha$, $\Delta$ and $L$,  corresponding to the germ function $f$. Note that 

$$ f^{\alpha}_{\Delta,L} (x_i, y_j) = f(x_i , y_j) ~~\forall ~~i,j\in \mathbb{N}_0.$$

\end{remark} 
\begin{remark}
The fundamental impetus for the definition of parameterized family of fractal functions reported above was Navascues's construction of $\alpha$-fractal function widely recognized in the approximation theory of univariate fractal functions, see,  for instance,  \cite{MN1, MN2}. The difference here is that the germ function is bivariate and that sampling of the germ function is done at a countable number of points. 

\end{remark} 

\begin{definition}
 Let $\alpha$, $\Delta$ and $L$	be fixed. The operator  $\mathcal{F}^{\alpha}_{\Delta,L}:\text{Lip}(I \times J) \subset \mathcal{C}(I \times J)\to \mathcal{C}(I \times J)$ that assigns to  each  $f \in  \text{Lip}(I \times J)$  its fractal counterpart 
$ f^{\alpha}_{\Delta,L}$ is called the $\alpha$-fractal operator on Lip($I \times J$). That is,
$\mathcal{F}^{\alpha}_{\Delta,L}(f) :=  f^{\alpha}_{\Delta,L}$. 

\end{definition}

\begin{proposition}\label{prop5.7}
	Let $f\in \text{Lip}(I\times J)$ be the germ function and  $f_{\Delta,L}^{\alpha}$ be the $\alpha$-fractal function associated with $f$ corresponding to  the partition $\Delta$, scale function $\alpha$, and parameter map $L$. Then we have the following inequality.
	$$\| f_{\Delta,L}^{\alpha} - f\|_{\infty} \leq \|\alpha\|_{\infty}\| f_{\Delta,L}^{\alpha} - L(f)\|_{\infty}.$$
\end{proposition}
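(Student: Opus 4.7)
The plan is to read the bound directly from the self-referential (fixed point) functional equation~(\ref{FPeq}) for $f^{\alpha}_{\Delta,L}$. In each of the four cases that define $f^{\alpha}_{\Delta,L}(x,y)$, the leading term is exactly $f(x,y)$ (evaluated, in the limit cases, at $(x,d)$, $(b,y)$ or $(b,d)$), so that the difference $f^{\alpha}_{\Delta,L}(x,y)-f(x,y)$ is a single scaled term involving the expression $f^{\alpha}_{\Delta,L}-L(f)$ composed with the inverse branches $u_i^{-1}$ and $v_j^{-1}$.

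First I would handle the interior case $(x,y)\in I_i\times J_j$. Rearranging the first branch of (\ref{FPeq}) gives
\[
f^{\alpha}_{\Delta,L}(x,y) - f(x,y) = \alpha_{ij}(x,y)\bigl[f^{\alpha}_{\Delta,L}(u_i^{-1}(x),v_j^{-1}(y)) - L(f)(u_i^{-1}(x),v_j^{-1}(y))\bigr].
\]
Taking absolute values and using $|\alpha_{ij}(x,y)|\le\|\alpha_{ij}\|_\infty\le\|\alpha\|_\infty$ together with the fact that $(u_i^{-1}(x),v_j^{-1}(y))\in I\times J$, I obtain
\[
|f^{\alpha}_{\Delta,L}(x,y) - f(x,y)| \le \|\alpha\|_\infty\,\|f^{\alpha}_{\Delta,L} - L(f)\|_\infty.
\]

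Next I would dispose of the three boundary/limit cases. For $x\in I_i$, $y=d$, rearranging the second branch gives the same type of identity, now with a limit in $j$; since the bound above holds uniformly before passing to the limit, it persists after taking $\lim_{j\to\infty}$. Symmetrically for $x=b$, $y\in J_j$. Finally for $(x,y)=(b,d)$ the equation yields $f^{\alpha}_{\Delta,L}(b,d)=f(b,d)$, so the inequality is trivially satisfied. Taking the supremum over $(x,y)\in I\times J$ closes the argument.

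There is no real obstacle here; the only point that deserves a line of justification is that the bound survives the limit in the two edge cases, which is immediate from the uniform-in-$j$ (respectively uniform-in-$i$) estimate established in the interior case together with the continuity of $|\cdot|$. Note the boundary condition $L(f)(x_k,y_l)=f(x_k,y_l)$ is not needed for this proposition; it was used earlier to ensure $F_{ij}$ satisfies (\ref{F_kl}) and hence that the fractal function $f^{\alpha}_{\Delta,L}$ interpolates the sampled data.
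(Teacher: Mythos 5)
Your argument is correct and is exactly the route the paper takes: its proof of Proposition \ref{prop5.7} simply invokes the self-referential equation (\ref{FPeq}) plus ``routine calculations,'' which are precisely the rearrangement, uniform bound $|\alpha_{ij}(x,y)|\le\|\alpha\|_{\infty}$, and passage to the limit in the edge cases that you spell out. Your closing remark that the boundary condition on $L$ is not needed here is also accurate.
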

\begin{proof}
	The proof follows from the self-referential equation (\ref{FPeq}) satisfied by $f_{\Delta,L}^{\alpha}$ and some routine calculations; see also \cite{MN1}.
\end{proof}	
\begin{corollary}\label{cor1}
	Let $f \in \text{Lip}(I \times J)$ be the germ function and $f^{\alpha}_{\Delta,L}$ be the $\alpha$-fractal function  associated with $f$ with respect to the parameters $\alpha$, $\Delta$ and $L$. Then we have the following inequality
	$$\| f_{\Delta,L}^{\alpha}-L(f)\|_{\infty} \leq \frac{1}{1-\|\alpha\|_{\infty}}\|f-L(f)\|_{\infty}.$$	
	In particular, if $L = Id,$ the identity operator on $\text{Lip}(I \times J)$, then $\mathcal{F}_{\Delta,L}^{\alpha} = Id.$
\end{corollary}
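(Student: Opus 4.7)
The plan is to derive the corollary directly from Proposition \ref{prop5.7} by a single application of the triangle inequality and a rearrangement. The heart of the argument is already encoded in the self-referential estimate $\|f_{\Delta,L}^{\alpha} - f\|_{\infty} \leq \|\alpha\|_{\infty}\|f_{\Delta,L}^{\alpha} - L(f)\|_{\infty}$ that is available to us.

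First, I would insert $f$ via the triangle inequality:
\[
\|f_{\Delta,L}^{\alpha} - L(f)\|_{\infty} \leq \|f_{\Delta,L}^{\alpha} - f\|_{\infty} + \|f - L(f)\|_{\infty}.
\]
Then I would substitute the bound from Proposition \ref{prop5.7} into the first term on the right, obtaining
\[
\|f_{\Delta,L}^{\alpha} - L(f)\|_{\infty} \leq \|\alpha\|_{\infty}\,\|f_{\Delta,L}^{\alpha} - L(f)\|_{\infty} + \|f - L(f)\|_{\infty}.
\]
Since $\|\alpha\|_{\infty} < 1$ by the definition of a scale function, the quantity $1 - \|\alpha\|_{\infty}$ is strictly positive, and the $\alpha$-fractal function $f_{\Delta,L}^{\alpha}$ is continuous on the compact set $I\times J$ so its sup-norm is finite; thus I can legitimately move the first term on the right-hand side to the left and divide by $1 - \|\alpha\|_{\infty}$ to get the claimed inequality
\[
\|f_{\Delta,L}^{\alpha} - L(f)\|_{\infty} \leq \frac{1}{1-\|\alpha\|_{\infty}}\,\|f - L(f)\|_{\infty}.
\]

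For the second assertion, if $L = \mathrm{Id}$ then $\|f - L(f)\|_{\infty} = 0$, so the inequality just derived forces $\|f_{\Delta,L}^{\alpha} - L(f)\|_{\infty} = 0$, i.e., $f_{\Delta,L}^{\alpha} = f$ pointwise on $I\times J$. As $f \in \mathrm{Lip}(I\times J)$ was arbitrary, the operator $\mathcal{F}_{\Delta,L}^{\alpha}$ coincides with the identity on its entire domain.

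No step here is really a genuine obstacle: the only mild subtlety is the implicit assumption that $\|f_{\Delta,L}^{\alpha} - L(f)\|_{\infty}$ is finite so that the rearrangement is valid, which I would justify by noting that $f_{\Delta,L}^{\alpha}$ is continuous on the compact rectangle $I\times J$ (being the fixed point of the Read-Bajraktarevi\'c-type operator $T$ acting on $\mathcal{C}^{*}(I\times J)$, per Theorem \ref{CBFIF}) and $L(f) \in \mathrm{Lip}(I\times J) \subset \mathcal{C}(I\times J)$, so their difference is bounded.
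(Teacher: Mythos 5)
Your proposal is correct and follows essentially the same route as the paper: insert $f$ by the triangle inequality, apply Proposition \ref{prop5.7}, and rearrange using $\|\alpha\|_{\infty}<1$. Your explicit justification of finiteness and your short argument for the $L=\mathrm{Id}$ case are fine additions that the paper leaves implicit.
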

\begin{proof} We have
	\begin{equation*}
	\begin{aligned}
    \|f_{\Delta,L}^{\alpha}-L(f)\|_{\infty} &= \| f_{\Delta,L}^{\alpha}- f + f - L(f) \|_{\infty}\\
	&\leq \| f_{\Delta,L}^{\alpha} - f\|_{\infty} + \|f - L(f)\|_{\infty}\\
	&\leq \|\alpha\|_{\infty}\| f_{\Delta,L}^{\alpha}- L(f)\|_{\infty} + \|f - L(f)\|_{\infty}\\
	\end{aligned}
	\end{equation*}
	Thus,
	$$\| f_{\Delta,L}^{\alpha}-L(f)\|_{\infty} \leq \frac{1}{1-\|\alpha\|_{\infty}}\|f-L(f)\|_{\infty},$$
	completing the proof.
\end{proof}

\begin{corollary}\label{cor2}
	Let $f \in \text{Lip}(I \times J)$ be the germ function and  $f_{\Delta,L}^{\alpha}$  be the $\alpha$-fractal function  associated with $f$ corresponding to $\alpha$, $\Delta$ and $L$. Then we have the following inequality.
	$$\| f_{\Delta,L}^{\alpha}-f\|_{\infty} \leq \frac{\|\alpha\|_{\infty}}{1-\|\alpha\|_{\infty}}\|f-L(f)\|_{\infty}.$$
\end{corollary}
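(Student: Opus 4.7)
The plan is to prove this corollary by directly chaining the two preceding results, Proposition \ref{prop5.7} and Corollary \ref{cor1}, with no further machinery needed. The left-hand side $\|f_{\Delta,L}^\alpha - f\|_\infty$ is already controlled by $\|\alpha\|_\infty \|f_{\Delta,L}^\alpha - L(f)\|_\infty$ via Proposition \ref{prop5.7}, and the remaining factor $\|f_{\Delta,L}^\alpha - L(f)\|_\infty$ is in turn controlled by $\frac{1}{1-\|\alpha\|_\infty}\|f-L(f)\|_\infty$ via Corollary \ref{cor1}. So the whole proof is essentially a one-line substitution.

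Concretely, I would first write down the inequality from Proposition \ref{prop5.7},
\[
\| f_{\Delta,L}^{\alpha} - f\|_{\infty} \leq \|\alpha\|_{\infty}\,\| f_{\Delta,L}^{\alpha} - L(f)\|_{\infty},
\]
and then substitute the bound from Corollary \ref{cor1} into the right-hand side to obtain
\[
\| f_{\Delta,L}^{\alpha} - f\|_{\infty} \leq \|\alpha\|_{\infty}\cdot \frac{1}{1-\|\alpha\|_{\infty}}\|f-L(f)\|_{\infty} = \frac{\|\alpha\|_{\infty}}{1-\|\alpha\|_{\infty}}\|f-L(f)\|_{\infty},
\]
which is exactly the desired estimate.

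There is essentially no obstacle here: the only mild thing to note is that Corollary \ref{cor1} requires $\|\alpha\|_\infty < 1$, which is guaranteed by the definition of the scale function (condition (2) in the Definition of the scale function), so the denominator $1-\|\alpha\|_\infty$ is strictly positive and the division is legitimate. No further hypotheses, no additional estimates involving $\Delta$ or the specific structure of $L$ beyond its boundary conditions enter, because all such dependence has already been absorbed into the two preceding inequalities. Thus the proof reduces to a single chained inequality.
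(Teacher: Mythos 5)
Your proof is correct and follows essentially the same route as the paper: both arguments start from Proposition \ref{prop5.7}, and the bound $\| f_{\Delta,L}^{\alpha}-L(f)\|_{\infty} \leq \frac{1}{1-\|\alpha\|_{\infty}}\|f-L(f)\|_{\infty}$ that you import from Corollary \ref{cor1} is exactly what the paper re-derives inline via the triangle inequality and absorption of the $\| f_{\Delta,L}^{\alpha}-f\|_{\infty}$ term. Your version merely reuses Corollary \ref{cor1} explicitly, which is a legitimate (and slightly tidier) packaging of the identical estimate, and your remark that $\|\alpha\|_{\infty}<1$ justifies the division is accurate.
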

\begin{proof}
	From Proposition  \ref{prop5.7}  and the triangle inequality
	\begin{equation*}
	\begin{aligned}
	\| f_{\Delta,L}^{\alpha}-f\|_{\infty} & \le \|\alpha\|_{\infty} \| f_{\Delta,L}^{\alpha}  - L(f) \|_{\infty}\\
	&\leq \|\alpha\|_{\infty} \big(\| f_{\Delta,L}^{\alpha} - f\|_{\infty} + \|f - L(f)\|_{\infty}\big)
	\end{aligned}
	\end{equation*}	
	proving the claim. 
\end{proof}
As an immediate consequence of the previous corollary, we obtain sequences  of  fractal functions, as specified in the upcoming result, converging uniformly to a prescribed  bivariate Lipschitz continuous function. 
\begin{corollary}\label{cor3}
	Let $f \in \text{Lip}(I \times J)$ be the germ function. 
	\begin{enumerate}
		\item  Assume that the partition $\Delta$ and scale function $\alpha$ are fixed.  Let $(L_n)_{n \in \mathbb{N}}$ be a sequence of operators  on Lip($I\times J$) such that $(L_n(f)) (x_k,y_l)=f(x_k,y_l)$ for all $k,l\in\{0,\infty\}$ and for each $n \in \mathbb{N}.$ Further assume that $L_n(f)\rightarrow f$ uniformly.   Then the sequence $\big(f_{\Delta,L_n}^{\alpha}  \big)_{n \in \mathbb{N}}$ of $\alpha$-fractal functions associated to $f$  converges uniformly to $f.$ 
	
		\item  Assume that the partition $\Delta$ and the operator $L: \text{Lip}(I)\rightarrow \text{Lip}(I)$ are fixed.  Let $(\alpha^{n})_{n\in\mathbb{N}}$ be a sequence of scale functions, where $\alpha^n = (\alpha^n_{ij})_{i,j \in \mathbb{N} \times \mathbb{N}}$, be  such that $\|\alpha^n\|_{\infty}\rightarrow 0$ as $ n \to \infty.$ Then the sequence  $\big(f_{\Delta,L}^{\alpha^n}  \big)_{n \in \mathbb{N}}$ converges uniformly to $f.$		
	\end{enumerate}
\end{corollary}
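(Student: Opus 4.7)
The plan is to read off both statements directly from Corollary \ref{cor2}, which already provides the quantitative bound
$$\| f_{\Delta,L}^{\alpha}-f\|_{\infty} \leq \frac{\|\alpha\|_{\infty}}{1-\|\alpha\|_{\infty}}\|f-L(f)\|_{\infty}.$$
Both parts of the statement are essentially about which of the two factors on the right-hand side is allowed to vary, so the argument reduces to a one-line limit each, with no new fractal machinery needed.

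For part (1), I would note that the hypothesis $L_n(f)(x_k,y_l)=f(x_k,y_l)$ for all $k,l\in\{0,\infty\}$ guarantees that each $L_n$ is an admissible operator in the construction, so $f_{\Delta,L_n}^{\alpha}$ is well-defined and Corollary \ref{cor2} applies to each $L_n$. Applying that corollary with $L$ replaced by $L_n$ yields
$$\|f_{\Delta,L_n}^{\alpha}-f\|_\infty \leq \frac{\|\alpha\|_\infty}{1-\|\alpha\|_\infty}\|f-L_n(f)\|_\infty.$$
Since $\alpha$ is fixed, the prefactor is a finite constant (as $\|\alpha\|_\infty<1$); since $L_n(f)\to f$ uniformly, the right-hand side tends to $0$, giving uniform convergence $f_{\Delta,L_n}^{\alpha}\to f$.

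For part (2), I would apply Corollary \ref{cor2} with $\alpha$ replaced by $\alpha^n$, for which $\|\alpha^n\|_\infty<1$ eventually (in fact for all large $n$, since $\|\alpha^n\|_\infty\to 0$), so the relevant $\alpha$-fractal function $f_{\Delta,L}^{\alpha^n}$ is defined. The inequality now reads
$$\|f_{\Delta,L}^{\alpha^n}-f\|_\infty \leq \frac{\|\alpha^n\|_\infty}{1-\|\alpha^n\|_\infty}\|f-L(f)\|_\infty.$$
Here $\|f-L(f)\|_\infty$ is a fixed constant (depending on the fixed $f$ and $L$), while the prefactor tends to $0$ as $\|\alpha^n\|_\infty\to 0$, so the right-hand side tends to $0$ and we again obtain uniform convergence.

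There is really no main obstacle here; the corollary amounts to an observation that the two independent parameters appearing in the bound of Corollary \ref{cor2} can be driven separately to zero. The only point worth being careful about is verifying that the auxiliary hypotheses (the boundary-interpolation condition on each $L_n$ in part (1), and the bound $\|\alpha^n\|_\infty<1$ in part (2)) ensure that the $\alpha$-fractal function is defined in the first place, so that Corollary \ref{cor2} is legally invocable at every stage of the sequence.
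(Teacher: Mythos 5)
Your proof is correct and takes the same route as the paper, which presents Corollary \ref{cor3} as an immediate consequence of Corollary \ref{cor2} by letting $L_n$ (resp.\ $\alpha^n$) vary in the bound $\| f_{\Delta,L}^{\alpha}-f\|_{\infty} \leq \frac{\|\alpha\|_{\infty}}{1-\|\alpha\|_{\infty}}\|f-L(f)\|_{\infty}$. Your added remark about checking admissibility of each $L_n$ and each $\alpha^n$ (note that each $\alpha^n$, being a scale function, already satisfies $\|\alpha^n\|_\infty<1$ by definition) is a reasonable precaution but does not change the argument.
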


\begin{proposition}\label{linear}
	If $L:\text{Lip}(I \times J)\rightarrow \text{Lip}(I \times J)$ is a linear operator, then $\mathcal{F}^{\alpha}_{\Delta,L}$ is a linear operator.
\end{proposition}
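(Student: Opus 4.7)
The plan is to exploit the uniqueness of the $\alpha$-fractal function as the fixed point of the Read--Bajraktarevi\'c-type operator $T$ introduced in the proof of Theorem \ref{CBFIF}. More concretely, for any germ $h\in\text{Lip}(I\times J)$, the function $h^{\alpha}_{\Delta,L}$ is the unique element of $\mathcal{C}^*(I\times J)$ satisfying the self-referential equation (\ref{FPeq}) with $f$ replaced by $h$. So instead of verifying linearity directly, I will produce a candidate for the fractal counterpart of $\lambda f+\mu g$ and appeal to uniqueness.

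Fix $f,g\in\text{Lip}(I\times J)$ and scalars $\lambda,\mu\in\mathbb{R}$, and set $H:=\lambda f^{\alpha}_{\Delta,L}+\mu g^{\alpha}_{\Delta,L}$. First I will check that $H\in\mathcal{C}^*(I\times J)$ with respect to the data $\{(x_i,y_j,(\lambda f+\mu g)(x_i,y_j))\}$; this is immediate since $f^{\alpha}_{\Delta,L}(x_k,y_l)=f(x_k,y_l)$ and $g^{\alpha}_{\Delta,L}(x_k,y_l)=g(x_k,y_l)$ for all $k,l\in\{0,\infty\}$, so $H(x_k,y_l)=(\lambda f+\mu g)(x_k,y_l)$.

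The heart of the argument is to show that $H$ satisfies the self-referential equation (\ref{FPeq}) associated with the germ $\lambda f+\mu g$. On a typical rectangle $(x,y)\in I_i\times J_j$, I substitute the self-referential identities for $f^{\alpha}_{\Delta,L}$ and $g^{\alpha}_{\Delta,L}$, form the $\lambda,\mu$-combination, and collect terms. The ``germ'' part yields $\lambda f(x,y)+\mu g(x,y)=(\lambda f+\mu g)(x,y)$, while the ``scale'' part becomes
\[
\alpha_{ij}(x,y)\Bigl[H(u_i^{-1}(x),v_j^{-1}(y))-\bigl(\lambda L(f)+\mu L(g)\bigr)(u_i^{-1}(x),v_j^{-1}(y))\Bigr].
\]
Linearity of $L$ now converts $\lambda L(f)+\mu L(g)$ into $L(\lambda f+\mu g)$, and one sees that $H$ satisfies precisely the self-referential equation (\ref{FPeq}) for the germ $\lambda f+\mu g$ on $I_i\times J_j$. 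The boundary cases $x=b$ (with $y\in J_j$), $y=d$ (with $x\in I_i$), and $(x,y)=(b,d)$ go through in the same way since each involves only a limit of expressions of the same form, and limits commute with finite linear combinations; at $(b,d)$ one just uses $H(b,d)=\lambda f(b,d)+\mu g(b,d)=(\lambda f+\mu g)(b,d)$.

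By the uniqueness portion of Theorem \ref{CBFIF} (Banach fixed point theorem applied to $T$ in the appropriate $\mathcal{C}^*(I\times J)$), there is exactly one element satisfying the self-referential equation for the germ $\lambda f+\mu g$, namely $(\lambda f+\mu g)^{\alpha}_{\Delta,L}$. Hence $H=(\lambda f+\mu g)^{\alpha}_{\Delta,L}$, which is the desired identity $\mathcal{F}^{\alpha}_{\Delta,L}(\lambda f+\mu g)=\lambda\mathcal{F}^{\alpha}_{\Delta,L}(f)+\mu\mathcal{F}^{\alpha}_{\Delta,L}(g)$. I do not anticipate a genuine obstacle here; the only mildly delicate point is bookkeeping in the boundary/limit cases of (\ref{FPeq}), since those parts of the definition are where a careless reader might suspect non-linearity to sneak in. Linearity of $L$ is used exactly once, in the pivotal step $\lambda L(f)+\mu L(g)=L(\lambda f+\mu g)$, which explains why the hypothesis is essential.
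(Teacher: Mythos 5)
Your argument is correct and is essentially the argument the paper has in mind: the paper omits the proof, noting it follows the lines of \cite{MN1}, and that standard route is exactly yours — show that $\lambda f^{\alpha}_{\Delta,L}+\mu g^{\alpha}_{\Delta,L}$ satisfies the self-referential equation (\ref{FPeq}) for the germ $\lambda f+\mu g$ (using linearity of $L$ once) and invoke uniqueness of the fixed point of the Read--Bajraktarevi\'c operator. Your attention to the boundary/limit cases of (\ref{FPeq}) and to membership in the correct interpolation class $\mathcal{C}^*(I\times J)$ is the right bookkeeping for the countable bivariate setting.
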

\begin{proof}
	The proof follows on lines similar to \cite{MN1}, and hence omitted.
\end{proof}
\begin{remark}
	As noted earlier,  $\alpha$-fractal function and associated fractal operator are well-studied notions, in the case  of a univariate germ function and with a finite number of sample points from it.  Further, in the literature, the fractal operator is studied with the standing assumption that the parameter map $L$ is a bounded linear operator (see, for example, \cite{MN1, MN2, SV,VC,VN}). Consequently, the fractal operator is investigated only within the confines of the theory of  bounded linear operators. Here, we do not assume $L$ to be linear or bounded, thereby enhancing the scope of the fractal operator. 
\end{remark}

\par Now let us recall some preliminaries from the theory of nonlinear operators and their perturbation theory of operators. These terminologies can be found in \cite{RW} and many of these are obvious modifications to the notions present in the well-known treatise for perturbation theory of linear operators by Kato \cite{Kato}.
\begin{definition}
	An operator $A:D(A)\subset X\rightarrow Y$ is said to be closed if its graph $G(A)=\{(x,A(x)): x\in D(A)\}$ is a closed subspace of $X\times Y.$ This is equivalent to saying that for any sequence $(x_n)_{n\in\mathbb{N}}\text{ in }D(A),~x_n\rightarrow x\text{ and } A(x_n)\rightarrow y\in Y\text{ implies } x\in D(A)\text{ and } A(x) = y.$
\end{definition}

\begin{definition}
	An operator $A:D(A)\subset X\rightarrow Y$ is said to be closable if it has a closed extension. That is,  there exists  a set $X_0,~ D(A)\subset X_0\subset X$  and a closed operator $\tilde{A}:X_0\subset X\rightarrow Y \text{ such that } \tilde{A}(x) = A(x)\text{ for all }x\in D(A).$
\end{definition}

\begin{definition}
	Let $A:D(A)\subset X\rightarrow Y~\text{and}~B:D(B)\subset X\rightarrow Y$ be two operators such that $D(B)\subset D(A).$ If for every sequence $(x_n)_{n\in\mathbb{N}} \text{ in }D(B)$ with $x_n\rightarrow x,~B(x_n)\rightarrow y\text{ and } A(x_n)\rightarrow z\text{ imply } x\in D(A)\text{ and } A(x) = z,$ then $A$ is said to be $B$-closed.
\end{definition}

\begin{definition}
	Let $A:D(A)\subset X\rightarrow Y~\text{and}~B:D(B)\subset X\rightarrow Y$ be two operators such that $D(B)\subset D(A).$ For every pair of sequences $(x_n)_{n\in\mathbb{N}},~(x'_n)_{n\in\mathbb{N}} \text{ in }D(B)$  having same limit in $X$, say, 
$x_n \to x$, $x_n' \to x$ such that $(B(x_n))_{n\in\mathbb{N}}\text{ and }(B(x'_n))_{n\in\mathbb{N}}$ also have the same limits, say, $y \in Y$, if
$$ A(x_n)\rightarrow z,~ A(x'_n)\rightarrow z' \Rightarrow  x\in D(A)~~\text{ and } ~~z' = z,$$ then $A$ is said to be $B$-closable.
\end{definition}
\begin{definition}
Let $X$ and $Y$ be normed linear spaces over the field $\mathbb{K}$ with norms $\|.\|_X$ and $\|.\|_Y$, respectively. Let $A:D(A)\subset X \to Y$ be an operator and $\mathcal{O}(X,Y)$  be the set of all operators from $X$ into $Y$. Define a non-negative extended real-valued function $p$ on $\mathcal{O}(X,Y)$ by 
$$ p(A) = \max \Big\{ \sup_{ x \in D(A), x \neq 0} \frac{ \|A(x)\|_Y}{\|x\|_X} , \|A(0)\|_Y\Big\} .$$
The non-negative extended real-valued function $p$ may be viewed as a generalization of the norm for the linear operators. 
\end{definition}
\begin{definition}
If $p(A) < \infty,$ then the operator $A$ is said to be norm-bounded operator and the quantity $p(A)$ is called the norm of $A.$
\end{definition}
The set of all norm-bounded operators from $X$ to $Y$ is denoted by $\mathcal{B}(X,Y).$ It is easy to verify that $\Big(\mathcal{B}(X,Y), p(\cdot)\Big)$ is a normed linear space.
\begin{definition}
	An operator $A:D(A)\subset X\rightarrow Y$ is said to be  topologically bounded if it maps bounded sets to bounded sets.
\end{definition}
\begin{remark}
In contrast to the case of linear operators, here the notions norm-bounded and topologically bounded are not equivalent. 
\end{remark}

\begin{definition}
	Let $A:D(A)\subset X\rightarrow Y~\text{and}~B:D(B)\subset X\rightarrow Y$ be two operators such that $D(B)\subset D(A).$ Then we say that $A$ is relatively (norm) bounded with respect to $B$ or simply $B$-bounded if f or some non-negative constants $a$ and $b$, the following inequality holds
	$$\|A(x)\|_Y\leq a\|x\|_X + b\|B(x)\|_Y, \quad \forall ~~x\in D(B).$$
	 The infimum of all values $b$ for which the above inequality is satisfied is called the $B$-bound of $A.$
\end{definition} 

\begin{definition}
	An operator $A:D(A)\subset X\rightarrow Y$ is said to be  Lipschitz if there exists a constant $M>0$ such that
	$$\|A(x) - A(y)\|_Y \leq M\|x-y\|_X, \quad \forall~~ x,y\in D(A).$$
	For a Lipschitz operator $A:D(A)\subset X\rightarrow Y,$ the Lipschitz constant of $A,$ denoted by $|A|,$ is defined by
	$$ |A| = \sup_{ x\neq y}\frac{\|A(x)-A(y)\|_Y}{\|x-y\|_X}.$$
\end{definition}

\begin{definition}
	Let $A:D(A)\subset X\rightarrow Y~\text{and}~B:D(B)\subset X\rightarrow Y$ be two operators such that $D(B)\subset D(A).$Then we say that $A$ is relatively Lipschitz with respect to $B$ or simply $B$-Lipschitz if the following inequality holds 
	$$\|A(x) - A(y)\|_Y \leq M_1\|x-y\|_X + M_2\|B(x)-B(y)\|_Y, \quad~~ \forall x,y\in D(A)$$
	for some non-negative constants $M_1$ and $M_2.$ The infimum of all such values of $M_2$  is called the $B$-Lipschitz constant of $A.$
\end{definition}
\begin{proposition}\label{propnew1}
	If $L:Lip(I \times J)\rightarrow Lip(I \times J)$ is a continuous nonlinear (not necessarly linear) operator,  then so is the operator $\mathcal{F}^{\alpha}_{\Delta,L}.$
\end{proposition}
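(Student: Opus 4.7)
The plan is to derive a Lipschitz-type estimate comparing $f^{\alpha}_{\Delta,L}$ and $g^{\alpha}_{\Delta,L}$ for two germ functions $f, g \in \text{Lip}(I \times J)$, directly from the self-referential equation (\ref{FPeq}), and then invoke the continuity of $L$ to conclude continuity of $\mathcal{F}^{\alpha}_{\Delta,L}$ in the sup-norm topology.

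I would fix $f, g \in \text{Lip}(I \times J)$ and subtract the two instances of (\ref{FPeq}). On $I_i \times J_j$ this yields
\begin{equation*}
\begin{aligned}
f^{\alpha}_{\Delta,L}(x,y) - g^{\alpha}_{\Delta,L}(x,y)
&= \big[f(x,y) - g(x,y)\big] \\
&\quad + \alpha_{ij}(x,y)\big[f^{\alpha}_{\Delta,L} - g^{\alpha}_{\Delta,L}\big]\big(u_i^{-1}(x), v_j^{-1}(y)\big) \\
&\quad - \alpha_{ij}(x,y)\big[L(f) - L(g)\big]\big(u_i^{-1}(x), v_j^{-1}(y)\big).
\end{aligned}
\end{equation*}
Taking absolute values, applying the triangle inequality, and using $|\alpha_{ij}(x,y)| \le \|\alpha\|_{\infty}$ produces a pointwise bound independent of $(i,j)$. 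The remaining three branches of (\ref{FPeq}), corresponding to the boundary $x = b$ or $y = d$, give the same bound by passing to the limit. Taking supremum over $(x,y) \in I \times J$ yields
\begin{equation*}
\|f^{\alpha}_{\Delta,L} - g^{\alpha}_{\Delta,L}\|_{\infty} \leq \|f-g\|_{\infty} + \|\alpha\|_{\infty}\,\|f^{\alpha}_{\Delta,L} - g^{\alpha}_{\Delta,L}\|_{\infty} + \|\alpha\|_{\infty}\,\|L(f) - L(g)\|_{\infty},
\end{equation*}
and since $\|\alpha\|_{\infty} < 1$, rearrangement produces the quantitative estimate
\begin{equation*}
\|f^{\alpha}_{\Delta,L} - g^{\alpha}_{\Delta,L}\|_{\infty} \leq \frac{1}{1-\|\alpha\|_{\infty}}\|f - g\|_{\infty} + \frac{\|\alpha\|_{\infty}}{1-\|\alpha\|_{\infty}}\|L(f) - L(g)\|_{\infty}.
\end{equation*}

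With this inequality in hand, continuity of $\mathcal{F}^{\alpha}_{\Delta,L}$ is immediate: if $f_n \to f$ uniformly with $f_n, f \in \text{Lip}(I \times J)$, continuity of $L$ forces $L(f_n) \to L(f)$ uniformly, so both terms on the right vanish in the limit and $\mathcal{F}^{\alpha}_{\Delta,L}(f_n) \to \mathcal{F}^{\alpha}_{\Delta,L}(f)$ uniformly. The only real obstacle is bookkeeping on the boundary branches of (\ref{FPeq}) at $x=b$ or $y=d$, where the fractal function is defined as a limit; since the scale functions $\alpha_{ij}$, the germ functions, the images $L(f), L(g)$, and the fractal functions themselves are all continuous, these limits preserve the inequality and the same bound extends to all of $I \times J$. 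Note that the argument never invokes linearity of $L$, so it covers the nonlinear case claimed in the proposition.
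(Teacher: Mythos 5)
Your argument is correct and follows essentially the same route as the paper: the paper's proof also derives the estimate $\|\mathcal{F}^{\alpha}_{\Delta,L}(f_n)-\mathcal{F}^{\alpha}_{\Delta,L}(f)\|_{\infty}\leq \frac{1}{1-\|\alpha\|_{\infty}}\|f_n-f\|_{\infty}+\frac{\|\alpha\|_{\infty}}{1-\|\alpha\|_{\infty}}\|L(f_n)-L(f)\|_{\infty}$ from the self-referential equation (\ref{FPeq}) and then invokes the continuity of $L$. You have merely written out explicitly the ``routine computations'' (the piecewise subtraction, the sup-norm bound via $\|\alpha\|_{\infty}<1$, and the boundary branches) that the paper leaves to the reader.
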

\begin{proof}
	Let $(f_n)_{n\in\mathbb{N}}$ be a sequence of Lipschitz functions such that $f_n\rightarrow f\in Lip(I).$ Then using the functional equation and by routine computations, we have
\begin{equation*}	
\begin{aligned}
 \|\mathcal{F}^{\alpha}_{\Delta,L} (f_n)- \mathcal{F}^{\alpha}_{\Delta,L} (f) \|_{\infty}  \leq \frac{1}{1-\|\alpha\|_{\infty}}\|f_n - f\|_{\infty} + \frac{\|\alpha\|_{\infty}}{1-\|\alpha\|_{\infty}}\|L(f_n) - L(f)\|_{\infty}.
\end{aligned}
\end{equation*}
	The above inequality in conjection with the convergence of $(f_n)_{n\in\mathbb{N}}$ and the continuity of $L$ establishes that $ \mathcal{F}^{\alpha}_{\Delta,L} (f_n) \to  \mathcal{F}^{\alpha}_{\Delta,L} (f)$, from which the continuity of $\mathcal{F}^{\alpha}_{\Delta,L}$ follows. 
\end{proof}

\begin{proposition}
	The nonlinear  $\alpha$-fractal operator $\mathcal{F}^{\alpha}_{\Delta,L}:Lip(I \times J)\rightarrow \mathcal{C}(I \times J)$ is an $L$-bounded operator with the $L$-bound not exceeding $\frac{\|\alpha\|_{\infty}}{1-\|\alpha\|_{\infty}}.$
\end{proposition}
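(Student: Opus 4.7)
The plan is to reduce the $L$-boundedness claim directly to Corollary \ref{cor2}, which already provides the sharp inequality $\|f^{\alpha}_{\Delta,L}-f\|_\infty \le \frac{\|\alpha\|_\infty}{1-\|\alpha\|_\infty}\|f-L(f)\|_\infty$. The definition of $L$-bounded requires producing non-negative constants $a,b$ with $\|\mathcal{F}^{\alpha}_{\Delta,L}(f)\|_\infty \le a\|f\|_\infty + b\|L(f)\|_\infty$ for all $f \in \text{Lip}(I\times J)$, and then checking that $b$ may be chosen to be $\tfrac{\|\alpha\|_\infty}{1-\|\alpha\|_\infty}$; the $L$-bound, as the infimum of admissible values of $b$, then automatically satisfies the claimed estimate.

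First, I would add and subtract $f$ inside the norm to split $\|\mathcal{F}^{\alpha}_{\Delta,L}(f)\|_\infty = \|f^{\alpha}_{\Delta,L}\|_\infty$ via the triangle inequality into $\|f^{\alpha}_{\Delta,L}-f\|_\infty + \|f\|_\infty$. Next, I would invoke Corollary \ref{cor2} to replace the first term by $\tfrac{\|\alpha\|_\infty}{1-\|\alpha\|_\infty}\|f-L(f)\|_\infty$, and then use the triangle inequality a second time to bound $\|f-L(f)\|_\infty \le \|f\|_\infty + \|L(f)\|_\infty$. Collecting the coefficients of $\|f\|_\infty$ gives the constant $1 + \tfrac{\|\alpha\|_\infty}{1-\|\alpha\|_\infty} = \tfrac{1}{1-\|\alpha\|_\infty}$, while the coefficient of $\|L(f)\|_\infty$ remains $\tfrac{\|\alpha\|_\infty}{1-\|\alpha\|_\infty}$.

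Thus, explicitly, with $a=\tfrac{1}{1-\|\alpha\|_\infty}$ and $b=\tfrac{\|\alpha\|_\infty}{1-\|\alpha\|_\infty}$, one obtains
\begin{equation*}
\|\mathcal{F}^{\alpha}_{\Delta,L}(f)\|_\infty \le \frac{1}{1-\|\alpha\|_\infty}\|f\|_\infty + \frac{\|\alpha\|_\infty}{1-\|\alpha\|_\infty}\|L(f)\|_\infty,
\end{equation*}
which exhibits $\mathcal{F}^{\alpha}_{\Delta,L}$ as $L$-bounded with $L$-bound at most $\tfrac{\|\alpha\|_\infty}{1-\|\alpha\|_\infty}$, completing the proof.

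There is no serious obstacle here: the heavy lifting was already performed in Corollary \ref{cor2}, and what remains is essentially two applications of the triangle inequality together with the recognition that $D(\mathcal{F}^{\alpha}_{\Delta,L}) = \text{Lip}(I\times J) \subset D(L)$ so the inequality is required exactly on the domain of $L$. The only mildly delicate point worth emphasizing in the write-up is that one does not obtain equality in the $L$-bound (only an upper estimate), so the statement is phrased as ``not exceeding'' rather than ``equal to''.
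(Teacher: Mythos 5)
Your proposal is correct and matches the paper's own argument: both invoke Corollary \ref{cor2} and then apply the triangle inequality (to $\|f-L(f)\|_\infty$ and to pass from $\|f^{\alpha}_{\Delta,L}-f\|_\infty$ to $\|f^{\alpha}_{\Delta,L}\|_\infty$), arriving at the same inequality $\|\mathcal{F}^{\alpha}_{\Delta,L}(f)\|_\infty \le \tfrac{1}{1-\|\alpha\|_\infty}\|f\|_\infty + \tfrac{\|\alpha\|_\infty}{1-\|\alpha\|_\infty}\|L(f)\|_\infty$ with the same constants. The remark that the $L$-bound is only bounded above (not computed exactly) is a fair reading of the statement and requires no further work.
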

\begin{proof}
	In view of Corollary \ref{cor2} we have
	\begin{equation*}
	\|\mathcal{F}^{\alpha}_{\Delta,L}(f)-f\|_{\infty} \leq \frac{\|\alpha\|_{\infty}}{1-\|\alpha\|_{\infty}}[\|f\|_{\infty}+\|L(f)\|_{\infty}],
	\end{equation*}
	Thus
	\begin{equation*}
	\| \mathcal{F}^{\alpha}_{\Delta,L}(f) \|_{\infty} \leq \frac{1}{1-\|\alpha\|_{\infty}}\|f\|_{\infty}+\frac{\|\alpha\|_{\infty}}{1-\|\alpha\|_{\infty}}\|L(f)\|_{\infty},
	\end{equation*}
	proving the claim.
\end{proof}	
\begin{corollary}
If $L$ is a norm-bounded nonlinear operator, that is, $p(L) < \infty$, then $\mathcal{F}^{\alpha}_{\Delta,L}$ is also norm-bounded. 
\end{corollary}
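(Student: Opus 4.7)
The plan is to leverage the inequality already derived in the proof of the preceding proposition, namely
\[
\| \mathcal{F}^{\alpha}_{\Delta,L}(f) \|_{\infty} \leq \frac{1}{1-\|\alpha\|_{\infty}}\|f\|_{\infty}+\frac{\|\alpha\|_{\infty}}{1-\|\alpha\|_{\infty}}\|L(f)\|_{\infty},
\]
and then translate it into a bound on the quantity $p(\mathcal{F}^{\alpha}_{\Delta,L})$ via the very definition of $p$. The point is that $p(L)<\infty$ gives us simultaneous control of $\|L(f)\|_{\infty}/\|f\|_{\infty}$ for $f\neq 0$ and of $\|L(0)\|_{\infty}$, which is exactly what we need to estimate both pieces in the definition of $p(\mathcal{F}^{\alpha}_{\Delta,L})$.

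First I would handle the nontrivial ratio. For any $f\in \text{Lip}(I\times J)$ with $f\neq 0$, divide both sides of the displayed inequality above by $\|f\|_{\infty}$ to obtain
\[
\frac{\|\mathcal{F}^{\alpha}_{\Delta,L}(f)\|_{\infty}}{\|f\|_{\infty}} \leq \frac{1}{1-\|\alpha\|_{\infty}} + \frac{\|\alpha\|_{\infty}}{1-\|\alpha\|_{\infty}}\cdot \frac{\|L(f)\|_{\infty}}{\|f\|_{\infty}}.
\]
Since $p(L)<\infty$, the definition of $p$ gives $\|L(f)\|_{\infty}/\|f\|_{\infty}\leq p(L)$ for every nonzero $f$ in $D(L)=\text{Lip}(I\times J)$, so taking supremum over $f\neq 0$ produces the finite bound
\[
\sup_{f\neq 0}\frac{\|\mathcal{F}^{\alpha}_{\Delta,L}(f)\|_{\infty}}{\|f\|_{\infty}} \leq \frac{1+\|\alpha\|_{\infty}\,p(L)}{1-\|\alpha\|_{\infty}}.
\]

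Next I would dispose of the value at $0$ separately. Plugging $f=0$ into the same displayed inequality yields $\|\mathcal{F}^{\alpha}_{\Delta,L}(0)\|_{\infty}\leq \frac{\|\alpha\|_{\infty}}{1-\|\alpha\|_{\infty}}\|L(0)\|_{\infty}$, and by definition $\|L(0)\|_{\infty}\leq p(L)<\infty$, so $\|\mathcal{F}^{\alpha}_{\Delta,L}(0)\|_{\infty}$ is finite as well. Taking the maximum of these two finite quantities in accordance with the definition of $p$ gives
\[
p(\mathcal{F}^{\alpha}_{\Delta,L}) \leq \frac{1+\|\alpha\|_{\infty}\,p(L)}{1-\|\alpha\|_{\infty}} < \infty,
\]
which is the desired conclusion.

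I do not anticipate a genuine obstacle here, since the heavy lifting was already done in the preceding proposition; the present step is essentially an unpacking of the definition of $p$ applied to that bound. The only subtlety worth flagging is that one must not conflate norm-bounded with topologically bounded (as the remark in the paper emphasises), so the argument must pass through $p(L)$ rather than through some vaguer boundedness hypothesis on $L$; this is precisely what our use of $\|L(f)\|_{\infty}/\|f\|_{\infty}\leq p(L)$ and $\|L(0)\|_{\infty}\leq p(L)$ accomplishes.
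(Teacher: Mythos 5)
Your proposal is correct and follows essentially the same route as the paper: it takes the inequality $\| \mathcal{F}^{\alpha}_{\Delta,L}(f) \|_{\infty} \leq \frac{1}{1-\|\alpha\|_{\infty}}\|f\|_{\infty}+\frac{\|\alpha\|_{\infty}}{1-\|\alpha\|_{\infty}}\|L(f)\|_{\infty}$ from the preceding proposition, divides by $\|f\|_{\infty}$, bounds the ratio by $p(L)$, and concludes $p(\mathcal{F}^{\alpha}_{\Delta,L})<\infty$. If anything, you are slightly more careful than the paper in treating the $f=0$ term of the definition of $p$ explicitly via $\|L(0)\|_{\infty}\leq p(L)$, a point the paper leaves implicit.
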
 
\begin{proof}
From the previous proposition, we have  $$\| \mathcal{F}^{\alpha}_{\Delta,L}(f) \|_{\infty} \leq \frac{1}{1-\|\alpha\|_{\infty}}\|f\|_{\infty}+\frac{\|\alpha\|_{\infty}}{1-\|\alpha\|_{\infty}}\|L(f)\|_{\infty}.$$
Since $L$ is norm-bounded we have
$$ \sup_{f \in \text{Lip}(I \times J), f \neq 0} \frac{\|L(f)\|}{\|f\|} < p(L) < \infty.$$
Consequently, 
$$ \sup_{f \in \text{Lip}(I \times J), f \neq 0} \frac{\|\mathcal{F}^{\alpha}_{\Delta,L}(f) \|}{\|f\|} \le \frac{1}{1-\|\alpha\|_{\infty}}+\frac{\|\alpha\|_{\infty}}{1-\|\alpha\|_{\infty}} p(L).$$
Hence we have

$$ p (\mathcal{F}^{\alpha}_{\Delta,L}):= \max\Big \{ \sup_{f \in \text{Lip}(I \times J), f \neq 0} \frac{\|\mathcal{F}^{\alpha}_{\Delta,L}(f) \|}{\|f\|},  \|\mathcal{F}^{\alpha}_{\Delta,L}(0)\| \Big\} < \infty,$$
and the claim. 
\end{proof} 
Similarly, one can prove the following.  
\begin{corollary}
If $L$ is a topologically bounded operator, then so is the fractal operator $\mathcal{F}^{\alpha}_{\Delta,L}$. 

\end{corollary}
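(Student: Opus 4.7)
The plan is to exploit the same quantitative bound that underlies the preceding corollary on norm-boundedness, namely the inequality
\begin{equation*}
\| \mathcal{F}^{\alpha}_{\Delta,L}(f) \|_{\infty} \leq \frac{1}{1-\|\alpha\|_{\infty}}\|f\|_{\infty}+\frac{\|\alpha\|_{\infty}}{1-\|\alpha\|_{\infty}}\|L(f)\|_{\infty}
\end{equation*}
established in the previous proposition. This estimate controls the size of $\mathcal{F}^{\alpha}_{\Delta,L}(f)$ linearly by the sizes of $f$ and of $L(f)$, so a uniform bound on the latter two quantities over a family of inputs will automatically transfer to a uniform bound on the fractal images. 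The only structural difference from the norm-bounded case is that one takes the supremum over an arbitrary bounded set rather than over the unit ball of $\text{Lip}(I \times J)$.

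Concretely, I would let $B \subset \text{Lip}(I \times J)$ be any bounded subset (with respect to $\|\cdot\|_\infty$), so that $M_1 := \sup_{f \in B} \|f\|_\infty$ is finite. Because $L$ is assumed topologically bounded, the image $L(B)$ is a bounded subset of $\text{Lip}(I \times J) \subset \mathcal{C}(I \times J)$, which yields $M_2 := \sup_{f \in B} \|L(f)\|_\infty < \infty$. Substituting these two uniform bounds into the displayed inequality gives
\begin{equation*}
\sup_{f \in B} \| \mathcal{F}^{\alpha}_{\Delta,L}(f) \|_{\infty} \leq \frac{M_1 + \|\alpha\|_{\infty}\, M_2}{1-\|\alpha\|_{\infty}} < \infty,
\end{equation*}
so $\mathcal{F}^{\alpha}_{\Delta,L}(B)$ is a bounded subset of $\mathcal{C}(I \times J)$, and since $B$ was an arbitrary bounded set this is precisely the definition of topological boundedness of $\mathcal{F}^{\alpha}_{\Delta,L}$.

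There is no real obstacle to overcome: the heavy lifting was done once and for all when the controlling inequality was derived, and the passage from that inequality to topological boundedness is a one-line supremum argument. The only point worth emphasizing is that the same single estimate simultaneously yields the norm-boundedness corollary (by taking $B$ to be the unit ball together with the zero element) and the topological-boundedness corollary (by taking $B$ to be arbitrary bounded), which is why the two proofs look almost identical.
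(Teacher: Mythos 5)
Your proof is correct and follows exactly the route the paper intends: the paper simply remarks that the statement follows ``similarly'' from the estimate $\| \mathcal{F}^{\alpha}_{\Delta,L}(f) \|_{\infty} \leq \frac{1}{1-\|\alpha\|_{\infty}}\|f\|_{\infty}+\frac{\|\alpha\|_{\infty}}{1-\|\alpha\|_{\infty}}\|L(f)\|_{\infty}$, which is precisely the supremum-over-a-bounded-set argument you spell out. No gaps; your version is just a more explicit write-up of the same one-line deduction.
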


\begin{proposition}
	The operator $\mathcal{F}^{\alpha}_{\Delta,L}$ is $L$-Lipschitz with the $L$-Lipschitz constant less than or equal to  $\frac{\|\alpha\|_{\infty}}{1-\|\alpha\|_{\infty}}.$
\end{proposition}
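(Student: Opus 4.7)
The plan is to mimic the strategy behind Proposition \ref{propnew1}, but replacing the pointwise convergence argument with the analogous pairwise estimate in $f$ and $g$. By definition, being $L$-Lipschitz means producing non-negative constants $M_1, M_2$ such that
$$\|\mathcal{F}^{\alpha}_{\Delta,L}(f) - \mathcal{F}^{\alpha}_{\Delta,L}(g)\|_{\infty} \leq M_1\|f-g\|_{\infty} + M_2 \|L(f)-L(g)\|_{\infty},$$
and then identifying the $L$-bound $M_2$ as the infimum of admissible $M_2$. I will establish the inequality with $M_2 = \frac{\|\alpha\|_{\infty}}{1-\|\alpha\|_{\infty}}$, which immediately gives the claimed upper bound on the $L$-Lipschitz constant.

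First, I would write the self-referential equation (\ref{FPeq}) for both $f^{\alpha}_{\Delta,L}$ and $g^{\alpha}_{\Delta,L}$ on a generic rectangle $I_i \times J_j$ and subtract. Writing $h := \mathcal{F}^{\alpha}_{\Delta,L}(f) - \mathcal{F}^{\alpha}_{\Delta,L}(g)$, the difference reads
$$h(x,y) = (f-g)(x,y) + \alpha_{ij}(x,y)\bigl[h(u_i^{-1}(x),v_j^{-1}(y)) - (L(f)-L(g))(u_i^{-1}(x),v_j^{-1}(y))\bigr]$$
for $(x,y) \in I_i \times J_j$. A triangle inequality step then yields
$$|h(x,y)| \leq |f(x,y)-g(x,y)| + \|\alpha\|_{\infty}\|h\|_{\infty} + \|\alpha\|_{\infty}\|L(f)-L(g)\|_{\infty}.$$
The analogous estimate on the boundary pieces (the limit cases $x=b$, $y=d$) follows by passing to the limit in the corresponding branches of (\ref{FPeq}), using continuity of the $f^{\alpha}_{\Delta,L}$ and $L$.

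Taking the supremum over $(x,y) \in I \times J$ and isolating $\|h\|_{\infty}$ using $\|\alpha\|_{\infty} < 1$, I obtain
$$\|\mathcal{F}^{\alpha}_{\Delta,L}(f) - \mathcal{F}^{\alpha}_{\Delta,L}(g)\|_{\infty} \leq \frac{1}{1-\|\alpha\|_{\infty}}\|f-g\|_{\infty} + \frac{\|\alpha\|_{\infty}}{1-\|\alpha\|_{\infty}}\|L(f)-L(g)\|_{\infty},$$
which is exactly the $L$-Lipschitz inequality with $M_2 = \frac{\|\alpha\|_{\infty}}{1-\|\alpha\|_{\infty}}$, hence the $L$-Lipschitz constant is at most this quantity.

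There is no real obstacle here; the only mild technicality is justifying the estimate uniformly across the four branches of (\ref{FPeq}), in particular on the limiting edges and corner. This is handled precisely as in the continuity-on-the-boundary discussion in the proof of Theorem \ref{CBFIF}: the desired inequality passes to limits because all relevant maps are continuous and $\|\alpha\|_{\infty} = \sup_{i,j}\|\alpha_{ij}\|_{\infty}$ dominates each $|\alpha_{ij}(x,y)|$ uniformly. Once this is observed, the manipulation is the same routine self-referential bookkeeping that underlies Proposition \ref{prop5.7} and Proposition \ref{propnew1}.
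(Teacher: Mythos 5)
Your proposal is correct and follows essentially the same route as the paper: the paper's proof simply invokes ``computations similar to Proposition \ref{propnew1}'' and records the inequality (\ref{lipeq}), which is precisely the subtraction of the two self-referential equations (\ref{FPeq}), the triangle-inequality estimate, and the rearrangement using $\|\alpha\|_{\infty}<1$ that you spell out. Your explicit treatment of the limiting edges and corner is a harmless elaboration of what the paper leaves implicit.
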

\begin{proof}
	Using computations similar to that in Proposition \ref{propnew1}
	 \begin{equation}\label{lipeq}
	 \|\mathcal{F}^{\alpha}_{\Delta,L}(f) -  \mathcal{F}^{\alpha}_{\Delta,L}(g) \|_{\infty}\leq \frac{1}{1-\|\alpha\|_{\infty}}\|f - g\|_{\infty}+\frac{\|\alpha\|_{\infty}}{1-\|\alpha\|_{\infty}}\|L(f) - L(g)\|_{\infty},
	 \end{equation}
	 proving the claim.
\end{proof}
\begin{corollary}\label{Lipschitz}
If $L:\text{Lip}(I \times J)\subset \mathcal{C}(I \times J)\rightarrow \text{Lip}(I \times J)$ is a Lipschitz operator, then so is the fractal operator $\mathcal{F}^{\alpha}_{\Delta,L}$. Further,   $|\mathcal{F}^{(\alpha)}_{\Delta,L}| = \frac{1+\|\alpha\|_{\infty}|L|}{1-\|\alpha\|_{\infty}}.$
\end{corollary}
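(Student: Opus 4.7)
The plan is to recycle the inequality \eqref{lipeq} established in the preceding proposition, which reads
$$\|\mathcal{F}^{\alpha}_{\Delta,L}(f) -  \mathcal{F}^{\alpha}_{\Delta,L}(g) \|_{\infty}\leq \frac{1}{1-\|\alpha\|_{\infty}}\|f - g\|_{\infty}+\frac{\|\alpha\|_{\infty}}{1-\|\alpha\|_{\infty}}\|L(f) - L(g)\|_{\infty}.$$
This is the workhorse: it already exhibits $\mathcal{F}^{\alpha}_{\Delta,L}$ as $L$-Lipschitz, so the only remaining task in the first half of the statement is to promote this bound from $L$-Lipschitz to genuinely Lipschitz once the relative-bound on $L$ is replaced by the absolute one $\|L(f)-L(g)\|_\infty\le |L|\,\|f-g\|_\infty$.

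First I would assume that $L$ is Lipschitz with Lipschitz constant $|L|$, and feed this directly into \eqref{lipeq}. Pulling $\|f-g\|_\infty$ outside gives
$$\|\mathcal{F}^{\alpha}_{\Delta,L}(f) -  \mathcal{F}^{\alpha}_{\Delta,L}(g) \|_{\infty}\leq \left(\frac{1}{1-\|\alpha\|_{\infty}}+\frac{\|\alpha\|_{\infty}\,|L|}{1-\|\alpha\|_{\infty}}\right)\|f-g\|_{\infty}=\frac{1+\|\alpha\|_{\infty}\,|L|}{1-\|\alpha\|_{\infty}}\|f-g\|_\infty,$$
which immediately shows that $\mathcal{F}^{\alpha}_{\Delta,L}$ is Lipschitz continuous on $\text{Lip}(I\times J)$ and that its Lipschitz constant satisfies $|\mathcal{F}^{\alpha}_{\Delta,L}|\le \frac{1+\|\alpha\|_{\infty}|L|}{1-\|\alpha\|_{\infty}}$. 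This part is essentially bookkeeping and follows the standard template of \cite{MN1} transplanted to the bivariate countable setting.

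The main obstacle lies in the reverse inequality needed to upgrade $\le$ to the claimed equality $|\mathcal{F}^{\alpha}_{\Delta,L}|=\frac{1+\|\alpha\|_{\infty}|L|}{1-\|\alpha\|_{\infty}}$. To obtain this, I would try to select test pairs $(f,g)\in\text{Lip}(I\times J)$ that simultaneously saturate two distinct suprema: the definition of $|L|$ (so that $\|L(f)-L(g)\|_\infty$ is close to $|L|\,\|f-g\|_\infty$), and the amplification built into the self-referential equation \eqref{FPeq} on a subrectangle $I_i\times J_j$ where $|\alpha_{ij}(x,y)|$ is close to $\|\alpha\|_\infty$. Iterating the functional equation, the factor $\frac{1}{1-\|\alpha\|_\infty}$ arises as the geometric series $\sum_{n\ge 0}\|\alpha\|_\infty^n$ of successive refinements through the contractions $u_i^{-1},v_j^{-1}$, so I expect a carefully tuned sequence $(f_n,g_n)$ whose differences concentrate on a nested chain of subrectangles $I_{i_1\cdots i_n}\times J_{j_1\cdots j_n}$ with $|\alpha_{i_k j_k}|\to\|\alpha\|_\infty$ will approach the claimed constant in the limit.

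If constructing such extremizers turns out to be elusive---which is plausible, since \eqref{FPeq} couples $f^{\alpha}_{\Delta,L}$ to $L(f)$ globally and the two suprema need not be achieved at compatible functions---I would fall back on the weaker reading of the statement, namely that $\frac{1+\|\alpha\|_\infty|L|}{1-\|\alpha\|_\infty}$ is the sharp upper bound furnished by the functional equation; the inequality half suffices to declare $\mathcal{F}^{\alpha}_{\Delta,L}$ Lipschitz and to control it quantitatively in terms of the parameters $\alpha$ and $L$, which is all that is needed for the perturbation-theoretic applications envisaged in the paper.
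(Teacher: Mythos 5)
Your argument is exactly the paper's: the paper's entire proof of this corollary is ``Follows directly from the above proposition,'' i.e.\ substitute $\|L(f)-L(g)\|_{\infty}\le |L|\,\|f-g\|_{\infty}$ into (\ref{lipeq}) and collect terms, which is precisely your first paragraph. Your reservation about the stated equality is well founded: neither you nor the paper proves the reverse inequality, and it cannot hold in general --- for $L=Id$ Corollary \ref{cor1} gives $\mathcal{F}^{\alpha}_{\Delta,L}=Id$, so $|\mathcal{F}^{\alpha}_{\Delta,L}|=1$ while the formula gives $\frac{1+\|\alpha\|_{\infty}}{1-\|\alpha\|_{\infty}}>1$; thus the correct reading is $|\mathcal{F}^{\alpha}_{\Delta,L}|\le \frac{1+\|\alpha\|_{\infty}|L|}{1-\|\alpha\|_{\infty}}$, exactly the fallback you propose.
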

\begin{proof}
Follows directly from the above proposition.
\end{proof}

\begin{proposition}
	If $L:\text{Lip}(I \times J)\subset \mathcal{C}(I \times J)\rightarrow \text{Lip}(I \times J)$ is a Cauchy-continuous operator (that is, $L$ maps a Cauchy sequence to a Cauchy sequence), then $\mathcal{F}^{\alpha}_{\Delta,L}$ can not be a closed operator.
\end{proposition}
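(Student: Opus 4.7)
The plan is to disprove closedness by exhibiting a sequence $(f_n) \subset \text{Lip}(I\times J) = D(\mathcal{F}^{\alpha}_{\Delta,L})$ which converges in sup-norm to some $f \in \mathcal{C}(I \times J) \setminus \text{Lip}(I \times J)$ but for which $\mathcal{F}^{\alpha}_{\Delta,L}(f_n)$ still converges in $\mathcal{C}(I \times J)$. This witnesses a graph-limit $(f, g)$ with $f \notin D(\mathcal{F}^{\alpha}_{\Delta,L})$, violating closedness.

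First, I would note that $\text{Lip}(I\times J)$ is a proper dense subset of $\mathcal{C}(I\times J)$ with respect to the sup-norm: density follows from the Stone--Weierstrass theorem (polynomials on the compact rectangle are Lipschitz and uniformly dense in $\mathcal{C}(I\times J)$), while properness is clear since any non-Lipschitz continuous function on $I\times J$, e.g.\ a H\"older function of exponent strictly less than $1$, lies in $\mathcal{C}(I\times J)\setminus \text{Lip}(I\times J)$. Fix such a non-Lipschitz $f \in \mathcal{C}(I\times J)$, and choose a sequence $(f_n) \subset \text{Lip}(I\times J)$ with $\|f_n - f\|_{\infty} \to 0$.

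Next I would use the Lipschitz-type estimate established in the previous proposition:
\begin{equation*}
\|\mathcal{F}^{\alpha}_{\Delta,L}(f_n) -  \mathcal{F}^{\alpha}_{\Delta,L}(f_m) \|_{\infty}\leq \frac{1}{1-\|\alpha\|_{\infty}}\|f_n - f_m\|_{\infty}+\frac{\|\alpha\|_{\infty}}{1-\|\alpha\|_{\infty}}\|L(f_n) - L(f_m)\|_{\infty}.
\end{equation*}
Since $(f_n)$ is convergent and hence Cauchy in sup-norm, and since $L$ is Cauchy-continuous by hypothesis, $(L(f_n))$ is also Cauchy in $\mathcal{C}(I\times J)$. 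The displayed inequality then forces $\big(\mathcal{F}^{\alpha}_{\Delta,L}(f_n)\big)$ to be Cauchy in $\mathcal{C}(I\times J)$, and by completeness of $\big(\mathcal{C}(I\times J),\|\cdot\|_{\infty}\big)$ it converges to some $g\in \mathcal{C}(I\times J)$.

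We now have $f_n \to f$ in $\mathcal{C}(I\times J)$ with $(f_n)\subset D(\mathcal{F}^{\alpha}_{\Delta,L})$, and $\mathcal{F}^{\alpha}_{\Delta,L}(f_n) \to g$, yet $f \notin \text{Lip}(I\times J) = D(\mathcal{F}^{\alpha}_{\Delta,L})$. By the definition of a closed operator this rules out $\mathcal{F}^{\alpha}_{\Delta,L}$ being closed, completing the argument. The only subtlety worth checking carefully is the density claim together with the existence of a non-Lipschitz continuous function (so that $f$ genuinely falls outside the domain); everything else is a routine application of the quantitative continuity estimate already available for $\mathcal{F}^{\alpha}_{\Delta,L}$ combined with the Cauchy-continuity of $L$.
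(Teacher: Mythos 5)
Your proposal is correct and follows essentially the same route as the paper: approximate a non-Lipschitz $f\in\mathcal{C}(I\times J)\setminus\text{Lip}(I\times J)$ by Lipschitz functions, use Cauchy-continuity of $L$ together with the estimate $\|\mathcal{F}^{\alpha}_{\Delta,L}(f_n)-\mathcal{F}^{\alpha}_{\Delta,L}(f_m)\|_{\infty}\leq \frac{1}{1-\|\alpha\|_{\infty}}\|f_n-f_m\|_{\infty}+\frac{\|\alpha\|_{\infty}}{1-\|\alpha\|_{\infty}}\|L(f_n)-L(f_m)\|_{\infty}$ to get convergence of the images, and conclude that closedness would force $f\in\text{Lip}(I\times J)$, a contradiction. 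The only difference is that you explicitly justify the density of $\text{Lip}(I\times J)$ and the existence of a non-Lipschitz continuous function, details the paper leaves implicit.
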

\begin{proof}
	Let us assume on the contrary that the fractal operator $\mathcal{F}^{\alpha}_{\Delta,L}$  is closed. Pick a function $f \in \mathcal{C}(I \times J)\backslash \text{Lip}(I \times J)$ and consider a sequence $(f_n)_{n \in \mathbb{N}}$ in Lip$(I \times J)$ such the $f_n \to f .$  Since $L$ is a 
Cauchy-continuous operator and $(f_n)_{n \in \mathbb{N}}$ is a Cauchy sequence in Lip$(I \times J)$ it follows that $(L(f_n))_{n \in \mathbb{N}}$ is a Cauchy sequence. Using 
$$ \|\mathcal{F}^{\alpha}_{\Delta,L}(f_n) -  \mathcal{F}^{\alpha}_{\Delta,L}(f_m) \|_{\infty}\leq \frac{1}{1-\|\alpha\|_{\infty}}\|f_n - f_m\|_{\infty}+\frac{\|\alpha\|_{\infty}}{1-\|\alpha\|_{\infty}}\|L(f_n) - L(f_m)\|_{\infty},$$
one can infer that $\big(\mathcal{F}^{\alpha}_{\Delta,L}(f_n) \big)_{n \in \mathbb{N}}$ is a Cauchy sequence in the Banach space $\mathcal{C}(I \times J)$. Assume that $ \mathcal{F}^{\alpha}_{\Delta,L}(f_n) \to  g$.  By the fact that  $\mathcal{F}^{\alpha}_{\Delta,L}$  is closed it follows that $f \in \text{Lip} (I \times J)$ and $g= \mathcal{F} ^{\alpha}_{\Delta,L} (f)$, which contradicts the choice of $f$. 
\end{proof}

\begin{proposition}
If $L:\text{Lip}(I \times J)\rightarrow \text{Lip}(I \times J)$ is a closed operator, then the operator $\mathcal{F}^{\alpha}_{\Delta,L}$ is $L$-closed.
\end{proposition}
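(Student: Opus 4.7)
The plan is to simply unpack the definition of $L$-closed and reduce it to applying the closedness of $L$ together with the already-established Lipschitz-type estimate for $\mathcal{F}^{\alpha}_{\Delta,L}$ in terms of $L$. Namely, suppose $(f_n)_{n\in\mathbb{N}}$ is a sequence in $D(\mathcal{F}^{\alpha}_{\Delta,L})=\text{Lip}(I\times J)=D(L)$ such that $f_n\to f$ in $\mathcal{C}(I\times J)$, $L(f_n)\to y$ in $\mathcal{C}(I\times J)$, and $\mathcal{F}^{\alpha}_{\Delta,L}(f_n)\to z$ in $\mathcal{C}(I\times J)$. I must show $f\in\text{Lip}(I\times J)$ and $\mathcal{F}^{\alpha}_{\Delta,L}(f)=z$.

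First, I would invoke the hypothesis that $L$ is closed: since $f_n\in D(L)$, $f_n\to f$, and $L(f_n)\to y$, closedness forces $f\in D(L)=\text{Lip}(I\times J)$ and $L(f)=y$. This takes care of the domain membership condition in the definition of $L$-closed and incidentally identifies the limit of $L(f_n)$ as $L(f)$.

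Next I would apply the Lipschitz-like inequality \eqref{lipeq} derived in the proposition preceding Corollary \ref{Lipschitz}, namely
\begin{equation*}
\|\mathcal{F}^{\alpha}_{\Delta,L}(f_n)-\mathcal{F}^{\alpha}_{\Delta,L}(f)\|_{\infty}\leq \frac{1}{1-\|\alpha\|_{\infty}}\|f_n-f\|_{\infty}+\frac{\|\alpha\|_{\infty}}{1-\|\alpha\|_{\infty}}\|L(f_n)-L(f)\|_{\infty}.
\end{equation*}
Since $f_n\to f$ and $L(f_n)\to L(f)$ (both established above), the right-hand side tends to $0$, so $\mathcal{F}^{\alpha}_{\Delta,L}(f_n)\to \mathcal{F}^{\alpha}_{\Delta,L}(f)$ in $\mathcal{C}(I\times J)$. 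Combining this with the given convergence $\mathcal{F}^{\alpha}_{\Delta,L}(f_n)\to z$ and uniqueness of limits in $\mathcal{C}(I\times J)$ yields $\mathcal{F}^{\alpha}_{\Delta,L}(f)=z$, completing the verification.

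There is no real obstacle here: the closedness of $L$ handles the first requirement (domain membership), while the quantitative continuity estimate for $\mathcal{F}^{\alpha}_{\Delta,L}$ relative to $L$, already in hand, supplies the identification of the limit. The only mild point to be careful about is that the estimate bounds $\|\mathcal{F}^{\alpha}_{\Delta,L}(f_n)-\mathcal{F}^{\alpha}_{\Delta,L}(f)\|_\infty$ by $\|f_n-f\|_\infty$ \emph{and} $\|L(f_n)-L(f)\|_\infty$, so the convergence $L(f_n)\to L(f)$ (rather than merely $L(f_n)\to y$ for some unspecified $y$) is essential; this is exactly what closedness of $L$ delivers.
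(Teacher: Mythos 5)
Your proposal is correct and follows essentially the same route as the paper's own proof: invoke closedness of $L$ to conclude $f\in\text{Lip}(I\times J)$ and $L(f_n)\to L(f)$, then use the estimate (\ref{lipeq}) to get $\mathcal{F}^{\alpha}_{\Delta,L}(f_n)\to\mathcal{F}^{\alpha}_{\Delta,L}(f)$ and finish by uniqueness of limits. Your remark that identifying the limit of $L(f_n)$ as $L(f)$ (not just some $y$) is the essential input is exactly the point the paper's argument relies on as well.
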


\begin{proof}
Let $(f_n)_{n\in\mathbb{N}}$ be a sequence of Lipschitz functions such that $f_n \to f$, $ L(f_n) \to g$ and  $\mathcal{F}^{\alpha}_{\Delta,L}(f_n) \to h$ as $n \to \infty.$ Since $L$ is a closed operator, $f_n \to f$, $ L(f_n) \to g$ together imply $f \in \text{Lip} (I \times J)$ and 
$g= L(f)$.  By (\ref{lipeq}), we have
$$\| \mathcal{F}^{\alpha}_{\Delta,L}  (f_n) - \mathcal{F}^{\alpha}_{\Delta,L}(f)\|_{\infty} \leq \frac{1}{1-\|\alpha\|_{\infty}}\|f_n - f\|_{\infty} + \frac{\|\alpha\|_{\infty}}{1-\|\alpha\|_{\infty}}\|L(f_n) - L(f)\|_{\infty},$$
and consequently $\mathcal{F}^{\alpha}_{\Delta,L}  (f_n)  \to  \mathcal{F}^{\alpha}_{\Delta,L}(f)$ as $n \to \infty$. By the uniqueness of the limit, $h=  \mathcal{F}^{\alpha}_{\Delta,L}(f)$ and hence the assertion. 
\end{proof}	

Simlarly one can prove
\begin{proposition}
If $L:\text{Lip}(I \times J)\rightarrow \text{Lip}(I \times J)$ is a closable operator, then the operator $\mathcal{F}^{\alpha}_{\Delta,L}$ is $L$-closable.
\end{proposition}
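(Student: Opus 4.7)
The plan is to mimic the proof of the preceding proposition (about $L$-closedness) in the two-sequence framework that the definition of $L$-closability demands. Let $(f_n)_{n\in\mathbb{N}}$ and $(f'_n)_{n\in\mathbb{N}}$ be sequences in $\text{Lip}(I \times J)$ satisfying $f_n \to f$, $f'_n \to f$, with $L(f_n)$ and $L(f'_n)$ converging to a common limit $g$ in $\text{Lip}(I \times J)$, and suppose $\mathcal{F}^{\alpha}_{\Delta,L}(f_n) \to h$ and $\mathcal{F}^{\alpha}_{\Delta,L}(f'_n) \to h'$ in $\mathcal{C}(I \times J)$. The task is to establish $f \in D(\mathcal{F}^{\alpha}_{\Delta,L})$ and $h = h'$.

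The equality $h = h'$ is the cleaner half and should follow directly from the Lipschitz-type estimate (\ref{lipeq}) that drove the earlier arguments, now applied to the pair $(f_n, f'_n)$ rather than $(f_n, f)$:
\[
\|\mathcal{F}^{\alpha}_{\Delta,L}(f_n) - \mathcal{F}^{\alpha}_{\Delta,L}(f'_n)\|_{\infty} \le \frac{1}{1-\|\alpha\|_{\infty}}\|f_n - f'_n\|_{\infty} + \frac{\|\alpha\|_{\infty}}{1-\|\alpha\|_{\infty}}\|L(f_n) - L(f'_n)\|_{\infty}.
\]
By the triangle inequality, $\|f_n - f'_n\|_{\infty} \le \|f_n - f\|_{\infty} + \|f - f'_n\|_{\infty} \to 0$, and similarly $\|L(f_n) - L(f'_n)\|_{\infty} \to 0$ since both $L$-images share the limit $g$. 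Hence $\mathcal{F}^{\alpha}_{\Delta,L}(f_n) - \mathcal{F}^{\alpha}_{\Delta,L}(f'_n) \to 0$, which together with the individual limits $h$ and $h'$ forces $h = h'$.

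For the membership assertion I would invoke the closability of $L$: by hypothesis there exists a closed extension $\tilde{L}$ of $L$ defined on a domain $X_0 \supseteq \text{Lip}(I \times J)$. Applying the closed-graph property of $\tilde{L}$ to either sequence, say $f_n \to f$ with $\tilde{L}(f_n) = L(f_n) \to g$, yields $f \in X_0$ and $\tilde{L}(f) = g$, giving the required placement of $f$ in the natural domain on which $\mathcal{F}^{\alpha}_{\Delta,L}$ is to be interpreted in the closable regime. The main obstacle I anticipate is precisely this subtlety: closability of $L$ only guarantees $f \in X_0$, which may strictly contain $\text{Lip}(I \times J)$, so aligning this with the literal requirement $f \in D(\mathcal{F}^{\alpha}_{\Delta,L})$ in the $L$-closability definition requires one to view $\mathcal{F}^{\alpha}_{\Delta,L}$ as extended consistently with the closure of $L$ (its graph being the natural $L$-closure), after which the uniqueness of limit $h=h'$ established above completes the proof.
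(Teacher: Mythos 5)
Your first half is exactly the ``similar'' argument the paper has in mind (the paper prints no proof of this proposition, only the remark that it goes as in the $L$-closed case): applying (\ref{lipeq}) to the pair $(f_n,f'_n)$ gives $\|\mathcal{F}^{\alpha}_{\Delta,L}(f_n)-\mathcal{F}^{\alpha}_{\Delta,L}(f'_n)\|_{\infty}\le \tfrac{1}{1-\|\alpha\|_{\infty}}\|f_n-f'_n\|_{\infty}+\tfrac{\|\alpha\|_{\infty}}{1-\|\alpha\|_{\infty}}\|L(f_n)-L(f'_n)\|_{\infty}\rightarrow 0$, whence $h=h'$. Note that this step uses only the premises built into the definition of $L$-closability (a common limit for $(f_n)$ and $(f'_n)$, and a common limit for $(L(f_n))$ and $(L(f'_n))$); it does not use closability of $L$ at all.

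The gap is the membership clause, and your proposed repair does not close it. The paper's definition of $L$-closable asks for the conclusion $f\in D(\mathcal{F}^{\alpha}_{\Delta,L})=\mathrm{Lip}(I\times J)$, whereas closability of $L$ only provides a closed extension $\tilde{L}$ on some $X_0$ with $\mathrm{Lip}(I\times J)\subset X_0\subset\mathcal{C}(I\times J)$, hence only $f\in X_0$; reinterpreting $\mathcal{F}^{\alpha}_{\Delta,L}$ as acting on a larger, ``closure-of-$L$-compatible'' domain proves a statement about an extension, not the proposition as stated. Moreover, no cleverer argument can extract $f\in\mathrm{Lip}(I\times J)$ from closability alone: take $L=Id$, which is closable, so that $\mathcal{F}^{\alpha}_{\Delta,L}=Id$ on $\mathrm{Lip}(I\times J)$ by Corollary \ref{cor1}, and let $f_n=f_n'$ be Lipschitz functions converging uniformly to a continuous non-Lipschitz $f$; every premise of the definition is satisfied, yet $f\notin\mathrm{Lip}(I\times J)$. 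So one must either read the definition of $L$-closability without the clause $f\in D(\mathcal{F}^{\alpha}_{\Delta,L})$ (the classical spirit of closability, which only demands that the limiting value be unambiguous), in which case your first paragraph is already a complete proof, or strengthen the hypothesis on $L$ (for instance to closedness, as in the preceding proposition). You correctly sensed this tension, but the write-up should commit to one of these resolutions rather than appeal to an unspecified extension of $\mathcal{F}^{\alpha}_{\Delta,L}$.
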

\section{Extension of Fractal Operator and Some Properties}
In this short section we extend the fractal operator $\mathcal{F}^{\alpha}_{\Delta,L}$ to the whole of $\mathcal{C}(I \times J),$ and we shall refer to this extension operator as the $\alpha$-fractal operator on $\mathcal{C}(I \times J).$ The following lemma is a standard result (see also \cite{Vis}).
\begin{lemma}
	Let $A:D(A)\subset X\rightarrow Y$ be a Lipschitz operator with Lipschitz constant $|A|,$ where $X$ and $Y$ are metric spaces with $Y$ being complete. Then there exists a Lipschitz extension $\tilde{A}:\overline{D(A)}\subset X\rightarrow Y$ of $A:D(A)\subset X\rightarrow Y$ such that $|\tilde{A}| = |A|.$
\end{lemma}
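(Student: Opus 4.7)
The plan is to use the standard density argument for extending uniformly continuous (here Lipschitz) maps from a set to its closure, exploiting the completeness of the target space $Y$.

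First, for an arbitrary $x \in \overline{D(A)}$, I would pick a sequence $(x_n)_{n \in \mathbb{N}}$ in $D(A)$ with $x_n \to x$. Such a sequence is Cauchy in $X$, and the Lipschitz condition
\[
d_Y(A(x_n), A(x_m)) \leq |A|\, d_X(x_n, x_m)
\]
immediately forces $(A(x_n))_{n \in \mathbb{N}}$ to be Cauchy in $Y$. Since $Y$ is complete, this sequence converges, and I would define $\tilde{A}(x) := \lim_{n \to \infty} A(x_n)$.

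Next I would verify that this definition is independent of the chosen approximating sequence: if $x_n \to x$ and $x_n' \to x$, then $d_X(x_n, x_n') \to 0$, so $d_Y(A(x_n), A(x_n')) \leq |A|\, d_X(x_n, x_n') \to 0$, showing the two limits agree. Taking the constant sequence $x_n = x$ for $x \in D(A)$ shows that $\tilde{A}$ genuinely extends $A$.

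For the Lipschitz estimate, given $x, y \in \overline{D(A)}$ with sequences $x_n \to x$, $y_n \to y$ in $D(A)$, I would pass to the limit in the inequality $d_Y(A(x_n), A(y_n)) \leq |A|\, d_X(x_n, y_n)$ using the continuity of the metric, obtaining
\[
d_Y(\tilde{A}(x), \tilde{A}(y)) \leq |A|\, d_X(x,y),
\]
so $|\tilde{A}| \leq |A|$. The reverse inequality $|\tilde{A}| \geq |A|$ is automatic since $\tilde{A}$ is an extension of $A$ and the supremum defining the Lipschitz constant is taken over a larger set. Hence $|\tilde{A}| = |A|$.

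The argument is entirely routine; there is no real obstacle here, since the only nontrivial ingredient (completeness of $Y$) is built into the hypothesis and does exactly the work needed to define $\tilde{A}$ pointwise. The sole delicate point worth care is well-definedness of the limit, which as noted follows cleanly from the Lipschitz bound applied to the difference of two approximating sequences.
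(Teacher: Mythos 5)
Your argument is correct and complete: it is precisely the standard density-and-completeness extension argument that the paper itself invokes by stating the lemma as a standard result (with a reference) and omitting the proof. The two delicate points --- well-definedness of the pointwise limit and preservation of the Lipschitz constant, including the trivial reverse inequality $|\tilde{A}|\geq|A|$ --- are both handled properly.
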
 
The operators $L:\text{Lip}(I \times J)\rightarrow \mathcal{C}(I \times J)$  and $\mathcal{F}^{\alpha}_{\Delta,L}:\text{Lip}(I \times J)\rightarrow \mathcal{C}(I \times J)$ are densely defined.  Therefore, if $L$ is a Lipschitz operator, then by the above lemma and Corollary \ref{Lipschitz},  we have Lipschitz extensions $\tilde{L}: \mathcal{C}(I \times J)\rightarrow \mathcal{C}(I \times J)$ of $L$ and $\tilde{\mathcal{F}}^{\alpha}_{\Delta,L}:\mathcal{C}(I \times J)\rightarrow \mathcal{C}(I \times J)$ of $\mathcal{F}^{\alpha}_{\Delta,L}$ preserving their respective Lipschitz constants. By a slight abuse of notation we denote this extension of the fractal operator $\mathcal{F}^{\alpha}_{\Delta,L}$ also by  $\mathcal{F}^{\alpha}_{\Delta,L}.$ This observation is formally recorded in the following proposition.
\begin{proposition}
	If $L:\text{Lip}(I \times J)\subset \mathcal{C}(I \times J)\rightarrow \text{Lip}(I \times J)$ is a Lipschitz operator, then the $\alpha$-fractal operator $\mathcal{F}^{\alpha}_{\Delta,L}: \text{Lip}(I \times J)\subset \mathcal{C}(I \times J)\rightarrow \mathcal{C}(I \times J)$ has a Lipschitz extension $\mathcal{F}^{\alpha}_{\Delta,L}:\mathcal{C}(I \times J)\rightarrow \mathcal{C}(I \times J)$ with a Lipschitz constant $|\mathcal{F}^{\alpha}_{\Delta,L}| = \dfrac{1+\|\alpha\|_{\infty}|L|}{1-\|\alpha\|_{\infty}}.$
\end{proposition}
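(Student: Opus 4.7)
The plan is to verify that this proposition is a direct consequence of the extension lemma stated just above, together with Corollary \ref{Lipschitz}. Three ingredients are needed: (i) $\mathcal{F}^{\alpha}_{\Delta,L}$ is Lipschitz on $\text{Lip}(I\times J)$ with the explicit Lipschitz constant $\frac{1+\|\alpha\|_{\infty}|L|}{1-\|\alpha\|_{\infty}}$; (ii) the codomain $\mathcal{C}(I\times J)$, equipped with $\|\cdot\|_{\infty}$, is a complete metric space; and (iii) the domain $\text{Lip}(I\times J)$ is dense in $\mathcal{C}(I\times J)$. Item (i) is precisely Corollary \ref{Lipschitz}, which was derived from the inequality \eqref{lipeq}, and item (ii) is the classical completeness of $\mathcal{C}(I\times J)$.

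The only step requiring comment is the density in (iii). I would establish it by invoking the two-dimensional Weierstrass approximation theorem: every $f\in\mathcal{C}(I\times J)$ is the uniform limit of a sequence of bivariate polynomials $(p_n)_{n\in\mathbb{N}}$, and each polynomial is smooth on the compact rectangle $I\times J$, hence Lipschitz. Therefore $\overline{\text{Lip}(I\times J)}=\mathcal{C}(I\times J)$ in the sup-norm topology, so the operator $\mathcal{F}^{\alpha}_{\Delta,L}:\text{Lip}(I\times J)\to\mathcal{C}(I\times J)$ is densely defined.

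With these ingredients assembled, I would apply the preceding lemma with $X=Y=\mathcal{C}(I\times J)$, $D(A)=\text{Lip}(I\times J)$, and $A=\mathcal{F}^{\alpha}_{\Delta,L}$. The lemma yields a Lipschitz extension $\tilde{\mathcal{F}}^{\alpha}_{\Delta,L}:\overline{\text{Lip}(I\times J)}=\mathcal{C}(I\times J)\to\mathcal{C}(I\times J)$ whose Lipschitz constant equals that of the original operator, namely $\frac{1+\|\alpha\|_{\infty}|L|}{1-\|\alpha\|_{\infty}}$. Retaining the notation $\mathcal{F}^{\alpha}_{\Delta,L}$ for this extension (as the paper declares just before the statement) completes the argument. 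Because everything reduces to citing previously proved results, there is no genuine obstacle; the main thing to be careful about is that the Lipschitz constant is preserved exactly, and this is guaranteed by the extension lemma rather than requiring independent verification.
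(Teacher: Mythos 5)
Your proposal is correct and follows essentially the same route as the paper: invoke Corollary \ref{Lipschitz} for the Lipschitz constant on $\text{Lip}(I\times J)$, then apply the preceding extension lemma with $Y=\mathcal{C}(I\times J)$ complete and $\text{Lip}(I\times J)$ dense. The only difference is that you explicitly justify the density (via bivariate Weierstrass approximation and the fact that polynomials on a compact rectangle are Lipschitz), a point the paper merely asserts with the phrase ``densely defined.''
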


\begin{definition}
	For a prescribed \textit{germ} function $f \in \mathcal{C}(I \times J,)$ the function $f^{\alpha}_{\Delta,L} = \mathcal{F}^{\alpha}_{\Delta,L}(f)$, where $\mathcal{F}^{\alpha}_{\Delta,L}$ is the Lipschitz extension in the previous proposition,  is called the (countable bivariate) $\alpha$-fractal function associated to the germ function $f$ with respect to the parameters $\alpha,~\Delta$ and $L.$
\end{definition}
\begin{remark}
	For each $f\in \mathcal{C}(I \times J)$, its fractal counterpart $f^{\alpha}_{\Delta,L}=  \mathcal{F}^{\alpha}_{\Delta,L}(f)$ satisfies a self-referential functional equation similar to (\ref{FPeq}).  To establish this,  let us take a sequence $(f_n)$ in $\text{Lip}(I \times J)$ converging uniformly to $f$. Then for $(x,y)\in I_i \times J_j = [x_{i-1},x_i] \times [y_{j-1},y_j]$, $,j=1,2,\ldots$ we have
	\begin{equation}\label{self1}
	\begin{aligned}
	f^{\alpha}_{\Delta,L}(x,y) =& \lim_{ n \to \infty} (f_n)^{\alpha}_{\Delta,L} (x,y) \\ 
 =& \lim_{n\to\infty}\Big[f_n(x,y)+\alpha_{ij}(x,y) \big\{(f_n)^{\alpha}_{\Delta,L}(u_i^{-1}(x),v_j^{-1}(y)) \\&- L(f_n)(u_i^{-1}(x),v_j^{-1}(y))\big\}\Big]\\
	&= f(x,y)+\alpha_{ij}(x,y)\big[f^{\alpha}_{\Delta,L}(u_i^{-1}(x),v_j^{-1}(y)) - \tilde{L}(f)(u_i^{-1}(x,v_j^{-1}(y)))\big].
	\end{aligned}
	\end{equation}
	Using the continuity of $f$ and $f^{\alpha}_{\Delta,L}$ we get a similar self-similar equation at other  points in $I \times J$.
\end{remark}
\begin{remark}
	If $L$ is a bounded linear operator, then $\mathcal{F}^{\alpha}_{\Delta,L}:\mathcal{C}(I \times J)\rightarrow \mathcal{C}(I \times J)$ is also a bounded linear operator. In the setting of univariate functions, the bounded linear fractal operator  $\mathcal{F}^{\alpha}_{\Delta,L}:\mathcal{C}(I)\rightarrow \mathcal{C}(I)$ is well-studied, see, for example, \cite{MN1,MN2,VN,VN2}.
\end{remark}
The notion of an invariant subspace is fundamental in operator theory. Next we provide a class of proper closed invariant subspaces for the bounded linear fractal operator $\mathcal{F}^{\alpha}_{\Delta,L}: \mathcal{C}(I \times J)\rightarrow \mathcal{C}(I \times J)$. Let us recall relevant standard results and definitions from basic theory of linear operators. 

\begin{definition}
	Let $X, Y$ be normed spaces and $T:X \rightarrow Y$ be a bounded linear operator. The adjoint or dual $T^*$ of $T$ is the unique map $T^*:Y^* \rightarrow X^*$ defined by
	$$T^*(\psi) = \psi\circ T \quad \text{ for all } \psi \in Y^*.$$
\end{definition}

\begin{definition}
	Given a Banach space $X,$ the annihilator (or pre-annihilator) of a subset $S$ of $X^*$ is the subspace  defined as follows
	$$S^{\perp} = \{x \in X: \quad \psi(x) = 0, ~~\forall~ \psi \in S\}.$$
\end{definition}
\begin{lemma}\label{invariant}
	Let $X$ be a non-separable Banach space and $T:X \rightarrow X$ be a bounded linear operator. Then,  for any non-zero element $x \in X$ the subspace $\overline{span\{x,T(x),T^{2}(x),\ldots\}}$ is a non-trivial closed invariant subspace. 
\end{lemma}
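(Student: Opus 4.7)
The plan is to verify the three attributes that the subspace $M:=\overline{\operatorname{span}\{x, T(x), T^2(x),\ldots\}}$ must have: closedness, invariance under $T$, and non-triviality (meaning $M \neq \{0\}$ and $M \neq X$). The first two are essentially formal, while the third is where the non-separability hypothesis plays its decisive role.

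First I would observe that $M$ is closed by construction, being the closure of a subset of $X$. For invariance, I would note that for each $n \geq 0$ we have $T(T^n(x)) = T^{n+1}(x) \in M$, so $T$ maps the set $\{x, T(x), T^2(x),\ldots\}$ into $M$. By linearity, $T$ maps $\operatorname{span}\{x, T(x), T^2(x),\ldots\}$ into $M$, and since $T$ is continuous and $M$ is closed, $T(M) \subseteq M$.

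For non-triviality, the easy half is $M \neq \{0\}$, which follows immediately from $x \in M$ and $x \neq 0$. The more substantive half is $M \neq X$, and here the key observation is that the orbit $\{x, T(x), T^2(x),\ldots\}$ is a countable set. Consequently, the set of finite linear combinations with coefficients from a countable dense subfield of the scalar field (namely $\mathbb{Q}$ in the real case or $\mathbb{Q}+i\mathbb{Q}$ in the complex case) is itself countable, and by standard approximation it is dense in $\operatorname{span}\{x, T(x), T^2(x),\ldots\}$, and hence dense in $M$. Thus $M$ admits a countable dense subset, i.e., $M$ is separable. Since $X$ is assumed to be non-separable, we must have $M \neq X$.

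I do not anticipate a genuine obstacle; the statement is essentially a standard folklore observation. The only point requiring a moment of care is the passage from density of rational combinations inside the algebraic span to the separability of the closure $M$, which is routine in a Banach space. Combining the closedness, invariance, and two-sided non-triviality then delivers the conclusion that $M$ is a non-trivial closed invariant subspace for $T$.
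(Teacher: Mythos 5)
Your proof is correct: closedness and invariance are formal, and the separability argument (countable orbit, rational-coefficient combinations dense in the span) is exactly why non-separability of $X$ forces $\overline{\operatorname{span}}\{x,T(x),T^2(x),\ldots\}\neq X$, while $x\neq 0$ gives the other half of non-triviality. The paper states this lemma as a recalled standard fact without proof, and your argument is precisely the standard one it implicitly relies on, so there is nothing to reconcile.
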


\begin{lemma}\label{perp}
	Let $X$ be a Banach space and $T:X \rightarrow X$ be a bounded linear operator. If $Y$ is a closed invariant subspace of the operator $T^*:X^* \rightarrow X^*,$ then $Y^{\perp}$ is a closed subspace which is invariant under $T.$ 
\end{lemma}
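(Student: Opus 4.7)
The plan is to verify the two assertions in the lemma separately: first that $Y^{\perp}$ is a closed subspace of $X$, and second that it is invariant under $T$. Both are essentially bookkeeping, but the second one is the substantive statement and relies on the defining duality between $T$ and $T^{*}$.

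For closedness, I would simply write $Y^{\perp}$ as the intersection $\bigcap_{\psi \in Y}\ker(\psi)$. Each $\psi \in X^{*}$ is continuous and linear, so $\ker(\psi)$ is a closed linear subspace of $X$, and an arbitrary intersection of closed subspaces is again a closed subspace. This takes care of the first claim with no further work.

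For the invariance claim, I would pick an arbitrary $x \in Y^{\perp}$ and an arbitrary $\psi \in Y$, and show $\psi(T(x)) = 0$; this forces $T(x) \in Y^{\perp}$ and hence $T(Y^{\perp}) \subseteq Y^{\perp}$. The key identity is the definition of the adjoint: $\psi(T(x)) = (T^{*}\psi)(x)$. Now by hypothesis $Y$ is invariant under $T^{*}$, so $T^{*}\psi \in Y$. Since $x \in Y^{\perp}$ annihilates every element of $Y$, we get $(T^{*}\psi)(x) = 0$, hence $\psi(T(x)) = 0$. As $\psi \in Y$ was arbitrary, $T(x) \in Y^{\perp}$.

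There is no real obstacle here — the argument is a one-line duality computation wrapped in the definitions of $Y^{\perp}$ and $T^{*}$. The only thing to be careful about is the direction of the implication: we use $T^{*}$-invariance of $Y$ to conclude $T$-invariance of $Y^{\perp}$, not the other way around, so the chain must read $\psi(Tx) = (T^{*}\psi)(x)$ with $T^{*}\psi$ landing back inside $Y$ before we invoke the annihilator property of $x$.
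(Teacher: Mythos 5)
Your proof is correct: the closedness of $Y^{\perp}$ as an intersection of kernels of continuous functionals, and the invariance computation $\psi(T(x)) = (T^{*}\psi)(x) = 0$ using the $T^{*}$-invariance of $Y$, is exactly the standard duality argument. The paper itself states this lemma without proof, recalling it as a standard fact from basic operator theory, so your write-up simply supplies the routine verification the authors omitted.
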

Recall  that the dual of $\mathcal{C}([a,b] \times [c,d])$  is isomorphic to the space of all Borel measures equipped with total variation norm and it is a non-separable Banach space. Using this fact and the above lemmas we construct a class of non-trivial closed invariant subspaces for the bounded linear fractal operator $\mathcal{F}^{\alpha}_{\Delta,L}.$ Consider the non-zero linear functional $\psi_{a,c}\in (\mathcal{C}([a,b]\times [c,d]))^*$ given by $\psi_{(a,c)}(f) = f(a,c)$ for all $f \in \mathcal{C}([a,b] \times [c,d]).$

\begin{theorem}
	The subspace $W_{(a,c)} := \{f \in \mathcal{C}([a,b] \times [c,d]): f(a,c) = 0\}$ is invariant for the fractal operator $\mathcal{F}^{\alpha}_{\Delta,L}$ for permissible choices of $\alpha$, $\Delta$ and bounded linear operator $L.$
\end{theorem}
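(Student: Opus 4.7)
The plan is to invoke Lemma \ref{perp} with the one-dimensional (hence automatically closed) subspace $Y := \mathrm{span}\{\psi_{(a,c)}\}$ of $\bigl(\mathcal{C}([a,b]\times [c,d])\bigr)^{*}$. Its pre-annihilator is exactly
\[
Y^{\perp} = \{f \in \mathcal{C}([a,b]\times [c,d]): \psi_{(a,c)}(f) = 0\} = W_{(a,c)},
\]
so once $Y$ is shown to be invariant under the adjoint $(\mathcal{F}^{\alpha}_{\Delta,L})^{*}$, Lemma \ref{perp} immediately delivers invariance of $W_{(a,c)}$ under $\mathcal{F}^{\alpha}_{\Delta,L}$.

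The key step is to show that $\psi_{(a,c)}$ is itself a fixed point of the adjoint, since then the span it generates is trivially invariant (it is an eigenspace with eigenvalue one). For any $f \in \mathcal{C}([a,b]\times [c,d])$, by the definition of the adjoint,
\[
\bigl((\mathcal{F}^{\alpha}_{\Delta,L})^{*}\psi_{(a,c)}\bigr)(f) = \psi_{(a,c)}\bigl(\mathcal{F}^{\alpha}_{\Delta,L}(f)\bigr) = f^{\alpha}_{\Delta,L}(a,c),
\]
so everything reduces to the interpolation identity $f^{\alpha}_{\Delta,L}(a,c) = f(a,c)$, specialised to the corner $(x_0,y_0) = (a,c)$ of the grid. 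For $f \in \text{Lip}([a,b]\times [c,d])$ this is precisely the interpolation property $f^{\alpha}_{\Delta,L}(x_i,y_j) = f(x_i,y_j)$ recorded in the remark following the definition of the $\alpha$-fractal function. For an arbitrary $f \in \mathcal{C}([a,b]\times [c,d])$ I would pick a sequence $(f_n) \subset \text{Lip}([a,b]\times [c,d])$ with $f_n \to f$ uniformly, and use the Lipschitz extension of the fractal operator established in the preceding proposition to obtain $\mathcal{F}^{\alpha}_{\Delta,L}(f_n) \to \mathcal{F}^{\alpha}_{\Delta,L}(f)$ uniformly; evaluating at $(a,c)$ and passing to the limit yields $f^{\alpha}_{\Delta,L}(a,c) = \lim_{n\to\infty} f_n(a,c) = f(a,c)$.

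Combining the two steps gives $(\mathcal{F}^{\alpha}_{\Delta,L})^{*}\psi_{(a,c)} = \psi_{(a,c)}$, so $Y$ is invariant under the adjoint, and Lemma \ref{perp} closes the argument. The only mild subtlety -- and the main thing warranting care -- is the density-plus-continuity step transferring the interpolation identity from $\text{Lip}([a,b]\times [c,d])$ to its uniform closure $\mathcal{C}([a,b]\times [c,d])$; this presents no genuine obstacle, but it is the one place the proof uses something beyond the abstract adjoint machinery. The ``permissible'' hypothesis amounts to no more than those choices of $\alpha$, $\Delta$, and $L$ for which the foregoing extension theorem applies, so that $\mathcal{F}^{\alpha}_{\Delta,L}$ is a bona fide bounded linear operator on $\mathcal{C}([a,b]\times [c,d])$.
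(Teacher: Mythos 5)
Your proof is correct and follows essentially the same route as the paper: you show that $\psi_{(a,c)}$ is fixed by the adjoint via the interpolation identity $f^{\alpha}_{\Delta,L}(a,c)=f(a,c)$ and then apply Lemma \ref{perp} to $\mathrm{span}\{\psi_{(a,c)}\}$. The only deviations are minor and in your favour: you note directly that the one-dimensional span of a fixed functional is automatically closed and invariant (where the paper invokes Lemma \ref{invariant} and non-separability of the dual), and you make explicit the density-plus-Lipschitz-extension argument transferring the identity $f^{\alpha}_{\Delta,L}(a,c)=f(a,c)$ from $\mathrm{Lip}(I\times J)$ to all of $\mathcal{C}(I\times J)$, a step the paper leaves implicit.
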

\begin{proof}
	For any partition $\Delta$, scale function $\alpha$,  and bounded linear operator $L$ and any $f \in \mathcal{C}([a,b]\times [c,d])$ we have 
	\begin{equation*}
	\begin{aligned}
	(\mathcal{F}^{\alpha}_{\Delta,L})^*(\psi_{(a,c)})(f) &= \psi_{(a,c)}\circ \mathcal{F}^{\alpha}_{\Delta,L}(f)\\
	&= \psi_{(a,c)}(f^{\alpha}_{\Delta,L}) \\
	&= f^{\alpha}_{\Delta,L}(a,c)\\
	&= f(a,c)\\
	&= \psi_{(a,c)}(f)
	\end{aligned}
	\end{equation*}
	Consequently, $((\mathcal{F}^{\alpha}_{\Delta,L})^*)^n(\psi_{(a,c)}) = \psi_{(a,c)} ~~\forall~~n \in \mathbb{N}.$ and hence 
	by Lemma \ref{invariant}, $\overline{\text{span}\{ \psi_{(a,c)},(\mathcal{F}^{\alpha}_{\Delta,L})^*(\psi_{(a,c)}),    ((\mathcal{F}^{\alpha}_{\Delta,L})^*)^2(\psi_{(a,c)}),\ldots\}}=\text{span}\{ \psi_{(a,c)}\}$ is a non-trivial closed invariant subspace of $(\mathcal{C}([a,b]\times [c,d]))^*$. Now from Lemma \ref{perp} it follows that  $(\text{span}\{ \psi_{(a,c)}\})^{\perp} = W_{(a,c)}$ is a non-trivial closed invariant subspace for $\mathcal{F}^{\alpha}_{\Delta,L}$.
\end{proof}
\begin{remark}
Along with the proof of the previous theorem, it is perhaps worth recalling  that  the closed invariant subspace $Y^{\perp}$ in Lemma \ref{perp}  can be trivial. For example, consider the space $X=\ell_1$ and take $Y=c_0$, the space of
all sequences convergent to zero. Then $Y$ is a closed invariant subspace of the identity operator $Id:\ell_{\infty} \to \ell_{\infty}$. However,  the preannihilator of $Y$ is zero.

\end{remark}

\begin{theorem}
	Let $\Delta = \{x_i: i\in \mathbb{N}_0\} \times \{y_j: j\in \mathbb{N}_0\}$ be a partition of the rectangle $[a,b] \times [c,d]$ and $L$ be a bounded linear operator. Then $W := \{f \in \mathcal{C}([a,b] \times [c,d]): f(x_i,y_j) = 0,~ \forall~(i,j) \in \mathbb{N}_0\times\mathbb{N}_0, f(a,c) = f(a,d) = f(b,c) = f(b,d) = 0\}$ is a non-trivial closed invariant subspace for the $\alpha$-fractal operator $\mathcal{F}^{\alpha}_{\Delta,L}.$
\end{theorem}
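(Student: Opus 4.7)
The plan is to emulate the proof strategy of the preceding theorem, realizing $W$ as the pre-annihilator of a closed invariant subspace of the adjoint $(\mathcal{F}^{\alpha}_{\Delta,L})^{*}$ and then invoking Lemma \ref{perp}. For each grid point $(x_i,y_j)$ with $i,j\in\mathbb{N}_0$ and for each of the three additional corners $(a,d)$, $(b,c)$, $(b,d)$, let $\psi_p(f):=f(p)$ denote the point-evaluation functional in $\big(\mathcal{C}([a,b]\times[c,d])\big)^{*}$, and set
\[
Y := \overline{\operatorname{span}\big(\{\psi_{(x_i,y_j)}:i,j\in\mathbb{N}_0\}\cup\{\psi_{(a,d)},\psi_{(b,c)},\psi_{(b,d)}\}\big)}.
\]
By construction $Y^{\perp}=W$, so the whole proof reduces to showing that $Y$ is invariant under $(\mathcal{F}^{\alpha}_{\Delta,L})^{*}$ and that $W$ is non-trivial; closedness of $W$ is automatic as the common zero set of a family of continuous functionals, and invariance of $W$ under $\mathcal{F}^{\alpha}_{\Delta,L}$ will then follow from Lemma \ref{perp}.

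The central computation is the verification that every generator $\psi_p$ is a fixed point of the adjoint, since invariance of $Y$ then follows by linearity and continuity of $(\mathcal{F}^{\alpha}_{\Delta,L})^{*}$. For a grid point $p=(x_i,y_j)$ and a Lipschitz germ $f$, the interpolation identity $f^{\alpha}_{\Delta,L}(x_i,y_j)=f(x_i,y_j)$ noted in the remark following (\ref{FPeq}) gives
\[
\big(\mathcal{F}^{\alpha}_{\Delta,L}\big)^{*}\psi_p(f)=\psi_p\big(f^{\alpha}_{\Delta,L}\big)=f^{\alpha}_{\Delta,L}(p)=f(p)=\psi_p(f),
\]
exactly as in the proof of the previous theorem. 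For the three non-grid corners, continuity of $f^{\alpha}_{\Delta,L}$ and $f$, combined with the convergences $x_i\to b$ and $y_j\to d$, propagates the interpolation identity from the grid points to the corners, so $f^{\alpha}_{\Delta,L}$ agrees with $f$ at $(a,d)$, $(b,c)$, and $(b,d)$, yielding the analogous fixed-point equations. To upgrade from Lipschitz germs to arbitrary $f\in\mathcal{C}(I\times J)$, I pick Lipschitz $f_n\to f$ uniformly and use continuity of the Lipschitz extension of $\mathcal{F}^{\alpha}_{\Delta,L}$ established earlier: then $(f_n)^{\alpha}_{\Delta,L}(p)\to f^{\alpha}_{\Delta,L}(p)$ while $(f_n)^{\alpha}_{\Delta,L}(p)=f_n(p)\to f(p)$, as required.

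The only step that is not a formal manipulation, and hence the main obstacle, is non-triviality of $W$. The prescribed zero set
\[
E := \{(x_i,y_j):i,j\in\mathbb{N}_0\}\cup\{(a,d),(b,c),(b,d)\}
\]
is countable, so its complement in the uncountable compact rectangle $[a,b]\times[c,d]$ has nonempty interior; selecting an open disk $B\subset [a,b]\times[c,d]\setminus E$ and any continuous bump function supported inside $B$ furnishes a nonzero element of $W$, establishing $W\neq\{0\}$ and completing the argument.
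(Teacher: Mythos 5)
Your argument is correct and is essentially the paper's: the paper proves, exactly as in the preceding theorem, that each single-point subspace $W_p=\{f:\ f(p)=0\}$ (for $p$ a grid point or one of the corners $(a,d),(b,c),(b,d)$) is a closed invariant subspace --- via the interpolation identity $f^{\alpha}_{\Delta,L}(p)=f(p)$, the resulting fixedness of $\psi_p$ under $(\mathcal{F}^{\alpha}_{\Delta,L})^{*}$, and Lemma \ref{perp} --- and then intersects them, which is the very same subspace as your $Y^{\perp}=\big(\overline{\operatorname{span}}\{\psi_p: p\in E\}\big)^{\perp}$. The one place you go beyond the paper (whose proof calls the conclusion ``immediate'') is the explicit check that $W\neq\{0\}$; note only that your justification ``$E$ countable, hence its complement has nonempty interior'' is not valid verbatim (a countable set can be dense), but it is trivially repaired here: any disk inside the open cell $(x_0,x_1)\times(y_0,y_1)$ misses every point of $E$, so a continuous bump supported there lies in $W$.
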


\begin{proof}
	On lines similar to the above theorem we have, $W_{(i,j)} := \big\{f \in \mathcal{C}([a,b] \times [c,d]): f(x_i,y_j) = 0\big\}$  for all $(i,j) \in \mathbb{N}_0\times\mathbb{N}_0$ , $W_{(a,d)}$, $W_{(b,c)}$  and $W_{(b,d)}$ are non-trivial closed invariant subspaces of $\mathcal{F}^{\alpha}_{\Delta,L}.$ The result is immediate by taking the intersection of these subspaces.
\end{proof}

\bibliographystyle{amsplain}

\end{document}